\begin{document}
\title{Generalized linear statistics for near epoch dependent processes with application to EGARCH-processes}
\author{Svenja Fischer 
 \thanks{Institute of Hydrology, Ruhr-Universit\"at Bochum,
 	D-44801 Bochum, Germany,
 	\texttt{svenja.fischer@rub.de}}}

\date{}
\maketitle
\bibliographystyle{plainnat}

\setlength{\parindent}{0pt}
\allowdisplaybreaks

\newtheorem{defin}{Definition}[section]
\newtheorem{theo}{Theorem}[section]
\newtheorem{coro}{Corollary}[section]
\newtheorem{lemm}{Lemma}[section]
\newtheorem{prop}{Proposition}[section]
\newtheorem{ass}{Assumption}[section]

\theoremstyle{definition}
\newtheorem{Exam}{Example}[section]
\newtheorem{Rem}{Remark}[section]

\newcommand{\var}{\operatorname{Var}}
\newcommand{\cov}{\operatorname{Cov}}
\newcommand{\med}{\operatorname{med}}

\begin{abstract}
\noindent
The class of Generalized L-statistics ($GL$-statistics) unifies a broad class of different estimators, for example  scale estimators based on multivariate kernels. $GL$-statistics are functionals of $U$-quantiles and therefore the dimension of the kernel of the $U$-quantiles determines the kernel dimension of the estimator. Up to now only few results for multivariate kernels are known. Additionally, most theory was established under independence or for short range dependent processes. In this paper we establish a central limit theorem for $GL$-statistics of functionals of short range dependent data, in perticular near epoch dependent sequences on absolutely regular processes, and arbitrary dimension of the underlying kernel. This limit theorem is based on the theory of $U$-statistics and $U$-processes, for which we show a central limit theorem as well as an invariance principle. The usage of near epoch dependent processes admits us to consider functionals of short range dependent processes and therefore models like the EGARCH-model.
We also develop a consistent estimator of the asymptotic variance of $GL$-statistics.

\noindent
KEYWORDS: $GL$-statistics; $U$-statistics; near epoch dependent.
\end{abstract}

\section{Introduction and Examples}
The class of Generalized linear statistics ($GL$-statistics) is known to unify some of the most common classes of statistics, such as $U$-statistics and $L$-statistics. They are defined as functionals of $U$-quantiles and the theory of $U$-statistics and $U$-processes proves itself to be a key tool in handling the asymptotic of $GL$-statistics. \cite{Serf1984} already showed the asymptotic normality by an approximation via $U$-statistics. This result was developed under the assumption of independence of the underlying random variables. For short range dependence \cite{Fischer.2016} showed the validity of the asymptotic normality. They used the concept of strong mixing, the weakest form of mixing. Nevertheless, this form of short range dependence does not contain several common used models. For example, GARCH-models or dynamical systems are excluded. These are models which are functionals of short range dependent data. We therefore want to consider the concept of near epoch dependent data on mixing processes in this paper, which includes such models.

\vspace{0.3cm}
First we want to state some general assumptions.

\vspace{0.2cm}

Let $X_1,\ldots, X_n$ be a sequence of random variables with distribution function $F$. As mentioned above these random variables shall not be independent but functionals of mixing sequences. A detailed definition is given later on. Moreover, let $F_n$ be the empirical distribution function of $X_1,\ldots,X_n$ given by
\begin{align*}
	F_n(x)=\frac{1}{n}\sum_{i=1}^{n}{1_{\left[X_i\leq x\right]}}, ~-\infty<x<\infty,
\end{align*}
and let $h(x_1,\ldots, x_m)$ be a kernel (a measurable and symmetric function) with given dimension $m\geq 2$. The related empirical distribution function $H_n$ of $h\left(X_{i_1},\ldots, X_{i_m}\right)$ is given by
\begin{align*}
	H_n(x)=\frac{1}{\binom{m}{n}}\sum_{1\leq i_1<\ldots < i_m\leq n}{1_{\left[h\left(X_{i_1},\ldots , X_{i_m}\right)\leq x\right]}}, ~ -\infty<x<\infty.
\end{align*}
$H_F$ is defined as the distribution function of the kernel $h$ with 

$H_F(y)=\mathbb{P}_F(h(Y_1,\ldots,Y_m)\leq y)$ for independent copies $Y_1,\ldots,Y_m$ of $X_1$ and $0<h_F<\infty$ the related density.

We define $h_{F;X_{i_2},\ldots,X_{i_k}}$ as the density of $h(Y_{i_1},X_{i_2},\ldots,X_{i_k},Y_{i_{k+1}},\ldots,Y_{i_m})$ for $2\leq k \leq m$ and $i_1<i_2<\ldots<i_m$.
\vspace*{0.3cm}

The object of interest in our work are $GL$-statistics.
$\newline$
A Generalized $L$-statistic with kernel $h$ is defined as

\begin{align*}
T(H_n)&=\int_0^1{H_n^{-1}(t)J(t)dt}+\sum_{i=1}^d{a_iH_n^{-1}(p_i)}\\
&=\sum_{i=1}^{n_{(m)}}{\left[\int_{\frac{(i-1)}{n_{(m)}}}^{\frac{i}{n_{(m)}}}{J(t)dt}\right]H_n^{-1}\left(\frac{i}{n_{(m)}}\right)}+\sum_{i=1}^d{a_iH_n^{-1}(p_i)}.
\end{align*}

The $GL$-statistic $T(H_n)$ estimates $T(H_F)$. For further information on $GL$-statistics we refer to  \citet{Serf1984} or \cite{Fischer.2016}.

In the following we present some common estimators that can be written as $GL$-statistic.
Besides the examples given in \cite{Fischer.2016} (generalized Hodges-Lehmann estimator, $\alpha$-trimmed mean and Generalized Median estimator) or \cite{Serf1984} there are some other estimators used for the estimation of the variance that can be expressed by a $GL$-statistic.

\vspace*{0.5cm}
\begin{Exam}
A robust estimator for the variance is Gini's Mean Difference. In contrast to the mean deviation this estimator is not only robust but also almost as efficient as the classical estimator for variance, the standard deviation (\cite{Gerst.2015}). It is given by
\begin{align*}
G_n=\frac{1}{n(n-1)}\sum_{i,j=1}^n \vert X_i-X_j \vert
\end{align*}
and can be written as a $GL$-statistic by choosing the kernel $h(x_i,x_j)=\vert x_i-x_j \vert$ (the kernel dimension is then of course $m=2$) and a constant continuous function $J(t)=1$. The discrete part of the $GL$-statistic vanishes by choosing $d=0$.
When considering the version of Gini's Mean Difference using order statistics, that is
\begin{align*}
G_n=\frac{2}{n(n-1)}\sum_{i=1}^n (2i-n-1)X_{(i:n)}
\end{align*}
with $X_{(i:n)}$ being the \textbf{i}th order statistic of the sample $X_1,\ldots,X_n$, the kernel is chosen as identity and $J(t)=\frac{4n}{n-1}t-\frac{2n}{n-1}$. 
\end{Exam}

The following two examples of scale estimators can be found in \cite{Croux.1992} and can all be expressed as a $GL$-statistic.

\begin{Exam}
An estimator for the scale which is robust with a breakdown point of $50\%$ is given by
\begin{align*}
Q=\med\limits_{i<j<k}\lbrace\min(\vert X_i-X_j\vert,\vert X_i-X_k\vert, \vert X_j-X_k\vert)\rbrace .
\end{align*}
It can be expressed as a $GL$-statistic by choosing the three-dimensional kernel $h(x_1,x_2,x_3)=\min(\vert x_1-x_2\vert, \vert x_1-x_3\vert, \vert x_2-x_3\vert )$ and the parameters $J=0, d=1, a_1=1$ and $p_1=1/2$. To generalise this estimator to subsamples of order greater than three and other quantiles the formula
\begin{align*}
Q_n^\alpha=\lbrace\min(\vert X_{i_l}-X_{i_k}\vert,1\leq l< k\leq m),1\leq i_1<\ldots<i_m\leq n\rbrace_{([\alpha\binom{n}{m}])},
\end{align*}
can be used, where $([\alpha \binom{n}{m}])$ denotes the empirical $\alpha$-quantile $(\alpha\in (0,1))$. Then 
 we can choose for arbitrary size $m$ of the subsample 
$h(x_1,\ldots,x_m)=\min(\vert x_j-x_i\vert, 1\leq i<j\leq m)$ with $J, d$ and $a_1$ as before but $p_1=\frac{[\alpha\binom{n}{m}]}{\binom{n}{m}}$.
\end{Exam}

\begin{Exam}
	Another location-free scale estimator is given by
	\begin{align*}
	C_n^\alpha=c_\alpha\vert X_{(i+[\alpha n]+1)}-X_{(i)}\vert_{([n/2]-[\alpha n])},
		\end{align*}
		$\alpha\in (0,0.5)$, which takes the $[n/2]-[\alpha n]$ order statistic of the difference of the first and last order statistic of all (sorted) subsamples of length $[\alpha n]+2$. The constant $c_\alpha$ makes the estimator Fisher-consistent under normality.
		 By the choice of a kernel $h$ of dimension $m=[\alpha n]+2$ with $h(x_1,\ldots,x_m)=\max(x_1,\ldots,x_m)-\min(x_1,\ldots,x_m)$ and $J=0, d=1, a_1=c_\alpha$ and $p_1=\frac{1}{\binom{n}{m}}$ the representation by a $GL$-statistic can be obtained.
		 A well-known special case of this estimator is the Least Median of Squares
		 \begin{align*}
		 LMS_n=0.7413\min\limits_i\vert X_{(i+[n/2])}-X_{(i)}\vert ,
		 \end{align*}
		 with $[\alpha n]=[n/2]-1$ and $c_\alpha=\frac{1}{2\Phi^{-1}(0.75)}=0.7413$.
\end{Exam}

 We want to emphasize that both estimators of \cite{Croux.1992} ($Q_n^\alpha$ and $	C_n^\alpha$) use multivariate kernels, that are kernels with dimension greater than 2. In these cases results for bivariate $U$-statistics would not be sufficient.

\vspace{0.3cm}

Let us now state the assumptions to consider functionals of short range dependent data. We want to consider absolutely regular random variables as underlying process. Absolutely regular (or $\beta$-mixing) is a stronger assumption than strong mixing since for the strong mixing coefficients $\alpha$ it is $\alpha(l)\leq \frac{1}{2}\beta(l)$ so that every absolutely regular process is likewise strong mixing. For more details see \cite{Brad2007}.

\begin{defin}
Let $\mathcal{A}, \mathcal{B} \subset \mathcal{F}$ be two $\sigma$-fields on the probability space $(\Omega, \mathcal{F},\mathbb{P})$. The absolute regularity coefficient of $\mathcal{A}$ and $\mathcal{B}$ is given by
\begin{align*}
\beta(\mathcal{A},\mathcal{B})=\mathbb{E}\sup\limits_{A\in \mathcal{A}}\left\vert \mathbb{P}(A\vert\mathcal{B})-\mathbb{P}(A)\right\vert.
\end{align*}
If $(X_n)_{n\in \mathbb{N}}$ is a stationary process, then the absolute regularity coefficients of $(X_n)_{n\in\mathbb{N}}$ are given by
\begin{align*}
\beta(l)=\sup\limits_{n\in\mathbb{N}}\mathbb{E}\sup\limits_{A\in \mathcal{F}_1^n}\left\vert \mathbb{P}(A\vert\mathcal{F}_{n+l}^\infty)-\mathbb{P}(A)\right\vert.
\end{align*}
$(X_n)_{n\in\mathbb{N}}$ is called absolutely regular, if $\beta(l)\rightarrow 0$ as $l\rightarrow \infty$.
\end{defin}

To consider not only short range dependent data but also functionals of these we use the concept of near epoch dependence. There exist different versions of near epoch dependence, for example  $L_p$- and P-near epoch dependence. Both concepts are applied widely when considering functionals of data, see for example \cite{Boro}, \cite{Dehling.2015} or \cite{Vogel.2015}. Often also the analogous definition of 1-approximating functionals is used. Here, we want to use the concept of $L_1$-near epoch dependence.

\begin{defin}
Let $((X_n,Z_n))_{n\in\mathbb{Z}}$ be a stationary process. $(X_n)_{n\in \mathbb{N}}$ is called $L_1$ near epoch dependent (NED) on the process $(Z_n)_{n\in \mathbb{Z}}$ with approximation constants $(a_l)_{l\in\mathbb{N}}$, if
\begin{align*}
\mathbb{E}\left\vert X_1-\mathbb{E}\left(X_1\vert\mathcal{G}_{-l}^l\right)\right\vert\leq a_l, \qquad l=0,1,2,\ldots,
\end{align*}
where $\lim\limits_{l\rightarrow \infty}a_l=0$ and $\mathcal{G}_{-l}^l$ is the $\sigma$-field generated by  $Z_{-l},\ldots,Z_l$.
\end{defin}

In the proofs we will see later on that this definition of near epoch dependence also implies P-near epoch dependence under certain conditions.

\vspace*{0.2cm}
With these assumptions we are now able to state a Central Limit Theorem for multivariate $U$-statistics as well as an Invariance Principle for $U$-processes in Section 2. These are the key tools for proving the asymptotic normality and the consistency of the long-run variance estimator for $GL$-statistics (Section 3), where the proofs are given in Section 4. Finally, we show that EGARCH-processes are near epoch dependent, which is used in a short simulation study on asymptotic normality.
\vspace*{0.2cm}

\section{$U$-statistics and $U$-processes}
The theory of $U$-statistics plays the most important role in proving our main theorem, the asymptotic normality of $GL$-statistics. $U$-statistics are used as an approximation of the error term $T(H_n)-T(H_F)$ and therefore the results of this section are needed for the main proof later on.

A $U$-statistic with kernel $h(x_1,\ldots,x_m)$ is given by
\begin{align*}
U_n=\frac{1}{\binom{n}{m}}\sum_{1\leq i_1<\ldots<i_m\leq n}h(X_{i_1},\ldots,X_{i_m}).
\end{align*}

For most of the results in this section we need a regularity condition for the kernel $h$. It is very similar to the Lipschitz-continuity and was developed by \cite{Denk1986}. The same variation condition is also used in \cite{Fischer.2016}.

\begin{defin}
	A kernel $h$ satisfies the variation condition, if there exists a constant $L$ and an $\epsilon_0>0$, such that for all $\epsilon\in (0,\epsilon_0)$
	\begin{align*}
	\mathbb{E}\left(\sup\limits_{\lVert(x_1,\ldots,x_m)-(X_1',\ldots,X_m')\rVert\leq \epsilon}\left| h(x_1,\ldots,x_m)-h(X_1',\ldots,X_m')\right|\right)\leq L\epsilon,
	\end{align*}
	where $X_i'$ are independent with the same distribution as $X_i$ and $\lVert \cdot \rVert$ is the Euklidean norm. 
	A kernel $h$ satisfies the extended variation condition, if there additionally exists a constant $L'$ and a $\delta_0>0$, such that for all $\delta\in(0,\delta_0)$ and all $2\leq k \leq m$
	\begin{align*}
	\mathbb{E}\left(\sup\limits_{\lvert x_{i_1}-Y_{i_1}\rvert\leq \delta}\left| h(x_{i_1},X_{i_2},\ldots,X_{i_k},Y_{i_{k+1}},\ldots,Y_{i_m})-h(Y_{i_1},X_{i_2},\ldots,X_{i_k},Y_{i_{k+1}},\ldots,Y_{i_m})\right|\right)\\
	\leq L'\delta
	\end{align*} 
	for independent copies $(Y_n)_{n\in\mathbb{N}}$ of $(X_n)_{n\in\mathbb{N}}$ and all $i_1<i_2<\ldots <i_m$. 
	If the kernel has dimension one, we note that it satisfies the extended variation condition, if it satisfies the variation condition.
\end{defin}

\begin{Rem}
	A Lipschitz-continuous kernel satisfies the variation condition.
\end{Rem}

We also need a second variation condition which demands regularity in the $L_2$-space.
\begin{defin}
	A kernel $h$ satisfies the $L_2$-variation condition, if there exists a constant $L$ and an $\epsilon_0>0$, such that for all $\epsilon\in (0,\epsilon_0)$
	\begin{align*}
	\mathbb{E}\left(\sup\limits_{\lVert(x_1,\ldots,x_m)-(X_1',\ldots,X_m')\rVert\leq \epsilon}\left| h(x_1,\ldots,x_m)-h(X_1',\ldots,X_m')\right|\right)^2\leq L\epsilon,
	\end{align*}
	where $X_i'$ are independent with the same distribution as $X_i$ and $\lVert \cdot \rVert$ is the Euklidean norm. 
\end{defin}

\begin{Rem}
	A bounded kernel which fulfils the variation condition also satisfies the $L_2$-variation condition due to the inequality $(a-b)^2\leq \vert a-b\vert\cdot (\vert a\vert +\vert b \vert)$.
\end{Rem}

A common technique when developing asymptotic results for $U$-statistics has benn developed by \cite{Hoeff1948} and makes a separate consideration of the single terms possible.

\begin{defin}\label{hoeff}(Hoeffding-Decomposition)
	$\newline$
	Let $U_n$ be a $U$-statistic with kernel $h=h(x_1,\ldots,x_m)$. Then we can write $U_n$ as
	\begin{align*}
	U_n=\theta+\sum_{j=1}^{m}{\binom{m}{j}\frac{1}{\binom{n}{j}}S_{jn}},
	\end{align*}
	where 
	\begin{align*}
	\theta&=\mathbb{E}(h(Y_1,\ldots,Y_m))\\
	\tilde{h}_j(x_1,\ldots,x_j)&=\mathbb{E}(h(x_1,\ldots,x_j,Y_{j+1},\ldots,Y_m))-\theta\\
	S_{jn}&=\sum_{1\leq i_1<\ldots<i_j\leq n}g_j(X_{i_1},\ldots,X_{i_j})\\
	g_1(x_1)&=\tilde{h}_1(x_1)\\
	g_2(x_1,x_2)&=\tilde{h}_2(x_1,x_2)-g_1(x_1)-g_1(x_2)\\
	g_3(x_1,x_2,x_3)&=\tilde{h}_3(x_1,x_2,x_3)-\sum_{i=1}^{3}g_1(x_i)-\sum_{1\leq i<j\leq 3}g_2(x_i,x_j)\\
	&\ldots\\
	g_m(x_1,\ldots,x_m)&=\tilde{h}_m(x_1,\ldots,x_m)-\sum_{i=1}^{m}g_1(x_i)-\sum_{1\leq i_1<i_2\leq m}g_2(x_{i_1},x_{i_2})\\
	&-\cdots - \sum_{1\leq i_1<\ldots<i_{m-1}\leq m}{g_{m-1}(x_{i_1},\ldots,x_{i_{m-1}})}.
	\end{align*}
	for independent copies $Y_1,\ldots,Y_m$ of $X_1$.
\end{defin}

We call $\frac{m}{n}\sum_{i=1}^ng_1(X_i)$ the linear part, the remaining parts are called degenerated.

\cite{Fischer.2016} already have shown that if the kernel $h$ satisfies the (extended) variation condition then also the Hoeffding kernels $g_1,\ldots,g_m$ do.

\vspace*{0.5cm}

The following theorem on the asymptotic normality of $U$-statistics for NED random variables is well-known for bivariate kernels (\cite{Wen2011}), but our theorem admits arbitrary dimension $m$. Under independence one can find a central limit theorem for $U$-statistics for example in \cite{serf1980}, whereas \cite{Wen2011} and \cite{Fischer.2016} show the same result for strong mixing random variables.

\begin{theo}\label{asynom}
	$\newline$
	Let $h:\mathbb{R}^m\rightarrow \mathbb{R}$ be a bounded kernel satisfying the extended variation condition.
	 Moreover, let $(X_n)_{n\in\mathbb{N}}$ be $L_1$ NED with approximation constants $(a_l)_{l\in\mathbb{N}}$ on an absolutely regular process $(Z_n)_{n\in\mathbb{Z}}$ with mixing coefficients $(\beta(l))_{l\in\mathbb{N}}$. Assume that there exists a $\delta>1$, such that $\beta(l)=O\left(l^{-\delta}\right)$ and $a_l=O\left(l^{-\delta-2}\right)$.
	Then we have
	\begin{align*}
	\sqrt{n}(U_n-\theta)\stackrel{D}{\longrightarrow}N(0,m^2\sigma^2)
	\end{align*}
	with $\sigma^2=\var(g_1(X_1))+2\sum_{j=1}^{\infty}{\cov(g_1(X_1),g_1(X_{1+j}))}$.
	
	If $\sigma=0$, then the statement is meant as convergence to $0$.
\end{theo}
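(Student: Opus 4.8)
The plan is to start from the Hoeffding decomposition of Definition \ref{hoeff}, which yields
\begin{align*}
\sqrt{n}(U_n - \theta) = \frac{m}{\sqrt{n}}\sum_{i=1}^n g_1(X_i) + \sqrt{n}\sum_{j=2}^m \binom{m}{j}\frac{1}{\binom{n}{j}}S_{jn}.
\end{align*}
The first (linear) term should carry the limiting distribution, while the remaining degenerate terms should be asymptotically negligible. The proof therefore splits into two separate tasks: establishing a central limit theorem for the linear part, and showing that each degenerate part vanishes after multiplication by $\sqrt{n}$.

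For the linear part I would first note that $g_1(X_i)=\tilde h_1(X_i)$ is a one-dimensional measurable function of $X_i$. Since the Hoeffding kernels inherit the variation condition (as remarked after Definition \ref{hoeff}, following \cite{Fischer.2016}), the sequence $(g_1(X_i))_i$ is again $L_1$ NED on the absolutely regular process $(Z_n)$ with adequately controlled approximation constants. I would then invoke the classical central limit theorem for univariate NED functionals of absolutely regular sequences to obtain
\begin{align*}
\frac{m}{\sqrt{n}}\sum_{i=1}^n g_1(X_i) \stackrel{D}{\longrightarrow} N(0, m^2\sigma^2),
\end{align*}
where $\sigma^2 = \var(g_1(X_1)) + 2\sum_{j=1}^\infty \cov(g_1(X_1), g_1(X_{1+j}))$ is the long-run variance; its covariance series converges because the mixing rate $\beta(l)=O(l^{-\delta})$ with $\delta>1$ forces the summands to decay fast enough.

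The heart of the argument, and the step I expect to be the main obstacle, is to show $\sqrt{n}\binom{m}{j}\binom{n}{j}^{-1}S_{jn}=o_P(1)$ for each $2\leq j\leq m$. By Chebyshev's inequality it suffices to establish $\var(S_{jn})=o(n^{2j-1})$ (one aims in fact for the sharper $O(n^{j})$). I would expand $\var(S_{jn})$ as a sum of covariances $\cov(g_j(X_{i_1},\ldots,X_{i_j}),g_j(X_{i_1'},\ldots,X_{i_j'}))$ over pairs of index tuples and classify the summands by their overlap pattern and by the largest gap separating the two tuples. Two tools drive the estimate: first, the NED property, used to replace each $g_j(X_{i_1},\ldots,X_{i_j})$ by a version measurable with respect to a finite block of the $Z$-process, the replacement error being controlled through $a_l$ and the extended variation condition; second, covariance inequalities for absolutely regular sequences, which bound the covariance of two such block-measurable functionals separated by a gap $l$ in terms of $\beta(l)$. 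Crucially, the canonical degeneracy of the Hoeffding kernels, namely $\mathbb{E}(g_j(x_1,\ldots,x_{j-1},Y_j))=0$ for independent copies, annihilates (up to the NED and mixing errors) the contribution of any configuration possessing an index well-separated from all remaining ones, so that only tightly clustered configurations survive. Counting these, and using $a_l=O(l^{-\delta-2})$ to absorb the accumulated approximation errors together with $\delta>1$ for summability, should deliver the required variance order.

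Combining both parts by Slutsky's theorem then gives $\sqrt{n}(U_n-\theta)\stackrel{D}{\longrightarrow}N(0,m^2\sigma^2)$. In the degenerate case $\sigma=0$ the limit has zero variance and the conclusion reduces to convergence to $0$ in probability, which is already covered by the estimates above.
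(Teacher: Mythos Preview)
Your proposal is correct and follows essentially the same route as the paper: Hoeffding decomposition, a CLT for the linear part via the NED property of $g_1(X_i)$ (the paper invokes Lemma~2.1.7 of \cite{Wen2011} for this transfer and then Theorem~2.3 of \cite{Ibragimov.1961}), and negligibility of the degenerate parts via covariance bounds exploiting the canonical degeneracy together with the mixing/NED rates. The covariance-counting argument you sketch for the degenerate terms is exactly what the paper packages into Lemmas~\ref{covineq2} and~\ref{finalcov} (with $c_{i_1,i_{k+1}}=1$), after which it defers to the strong-mixing computation in \cite{Fischer.2016}; your outline of replacing $g_j$ by block-measurable approximants and then applying $\beta$-mixing covariance inequalities is precisely the mechanism behind those lemmas.
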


In general, if the distribution of the $(X_n)_{n\in \mathbb{N}}$ is not specified, the long run variance $\sigma^2$ in Theorem \ref{asynom} is unknown. Therefore, for applications an estimator of $\sigma^2$ is needed. For bivariate $U$-statistics or $U$-processes \cite{Dehling.2015} and \cite{Vogel.2015} give consistent estimators by using an empirical version of the first Hoeffding kernel and a weight function. The multivariate extension to this estimator is
\begin{align*}
\hat{\sigma}^2=\sum_{r=-(n-1)}^{n-1} \kappa \left(\frac{\vert r \vert}{b_n}\right)\hat{\rho}(r),
\end{align*}
where $\kappa$ is the weight function and $b_n$ the bandwidth. 
\begin{align*}
\hat{\rho}(r)=\frac{1}{n}\sum_{i=1}^{n-r}\hat{g}_1(X_i)\hat{g}_1(X_{i+r})
\end{align*}
is the empirical covariance for lag $r$, using the empirical version of the first Hoeffding kernel
\begin{align*}
\hat{g}_1(x)=\frac{1}{n^{m-1}}\sum_{1\leq i_1<\ldots<i_{m-1}\leq n}h(x,X_{i_1},\ldots,X_{i_{m-1}})-\frac{1}{n^m}\sum_{1\leq i_1<\ldots <i_m<n}h(X_{i_1},\ldots,X_{i_m}).
\end{align*}

As \cite{Dehling.2015} have already shown we need some regularity conditions for $\kappa$ and $b_n$ to achieve consistency of the estimator. These are similar to the assumption made in \cite{deJong.2000} and are fulfilled, for example, by the Bartlett kernel $\kappa(t)=(1-\vert t \vert )1_{[\vert t \vert \leq 1]}$.

\begin{ass}\label{asskern}
	The function $\kappa:[0,\infty)\rightarrow [0,1)$ is continuous at 0 and all but a finite number of points. Moreover, $\vert \kappa \vert$ is dominated by a non-increasing, integrable function and \begin{align*}
	\int_0^\infty\Big\vert \int_0^\infty \kappa(t) \cos(xt)dt \Big\vert dx < \infty.
	\end{align*}
	The bandwith $b_n$ satisfies $b_n\rightarrow \infty$ as $n\rightarrow \infty$ and $b_n/\sqrt{n} \rightarrow 0$.
\end{ass}
With this considerations we are able to show that $\hat{\sigma}^2$ is a consistent estimator for the long-run variance.
\begin{theo}\label{kernest}
	Let $h:\mathbb{R}^m\rightarrow \mathbb{R}$ be a bounded kernel satisfying the extended variation condition and the $L_2$-variation condition.
	
	Moreover, let $(X_n)_{n\in\mathbb{N}}$ be NED with approximation constants $(a_l)_{l\in\mathbb{N}}$ on an absolutely regular process $(Z_n)_{n\in\mathbb{Z}}$ with mixing coefficients $(\beta(l))_{l\in\mathbb{N}}$ and let a $\delta>11$  exist, such that $\sum_{l=1}^\infty l\beta^{2/(2+\delta)(l)}<\infty$ and $a_l=O\left(l^{-\delta-3}\right)$. The weight function $\kappa$ and the bandwidth $b_n$ should fulfil Assumption \ref{asskern}.
	Then
	\begin{align*}
	\hat{\sigma}^2\rightarrow \sigma^2 \text{ for } n\rightarrow \infty,
	\end{align*}
	where $\sigma^2=\var(g_1(X_1))+2\sum_{j=1}^{\infty}{\cov(g_1(X_1),g_1(X_{1+j}))}$.
\end{theo}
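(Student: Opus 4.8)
The plan is to compare $\hat\sigma^2$ with the \emph{oracle} estimator built from the true first Hoeffding kernel,
\begin{align*}
\tilde\sigma^2=\sum_{r=-(n-1)}^{n-1}\kappa\!\left(\frac{|r|}{b_n}\right)\tilde\rho(r),\qquad \tilde\rho(r)=\frac1n\sum_{i=1}^{n-|r|}g_1(X_i)g_1(X_{i+|r|}),
\end{align*}
and to establish separately the two statements $\tilde\sigma^2\to\sigma^2$ and $\hat\sigma^2-\tilde\sigma^2\to0$ in probability. The first is a classical long-run variance (HAC) consistency statement, while the second is the genuinely new, multivariate part, namely that replacing $g_1$ by its empirical counterpart $\hat g_1$ is asymptotically harmless.

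For the oracle part I would first record that $g_1(X_i)$ is a bounded, stationary, centered (since $\mathbb{E}g_1(X_1)=0$ by construction) univariate sequence which, by the result of \cite{Fischer.2016} that $g_1$ inherits the variation condition from $h$, is itself $L_1$ NED on the absolutely regular process $(Z_n)_{n\in\mathbb{Z}}$; a short argument splitting on $\{|X_1-\mathbb{E}(X_1\mid\mathcal{G}_{-l}^l)|\le\epsilon\}$ and using the variation condition together with the boundedness of $g_1$ and Markov's inequality transfers the rate $a_l=O(l^{-\delta-3})$ into fast-decaying approximation constants for $g_1(X_i)$. For such a sequence the consistency $\tilde\sigma^2\to\sigma^2$ is exactly the situation treated in \cite{deJong.2000} (and, for $m=2$, used in \cite{Dehling.2015}): Assumption \ref{asskern} on $\kappa$ and $b_n$ together with $\sum_l l\,\beta^{2/(2+\delta)}(l)<\infty$ supply the hypotheses there, so I would verify the NED property and then invoke that result.

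For the replacement part, boundedness of $h$ (hence of $g_1$ and $\hat g_1$) and the elementary splitting $\hat g_1(X_i)\hat g_1(X_j)-g_1(X_i)g_1(X_j)=(\hat g_1(X_i)-g_1(X_i))\hat g_1(X_j)+g_1(X_i)(\hat g_1(X_j)-g_1(X_j))$ give $|\hat\rho(r)-\tilde\rho(r)|\le C\,D_n$ with $D_n=\frac1n\sum_{i=1}^n|\hat g_1(X_i)-g_1(X_i)|$. Since $\kappa$ is dominated by a non-increasing integrable function, $\sum_r|\kappa(|r|/b_n)|=O(b_n)$, whence $|\hat\sigma^2-\tilde\sigma^2|\le C' b_n D_n$. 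Because $b_n/\sqrt n\to0$, the whole second part reduces to the single key estimate
\begin{align*}
\mathbb{E}\,|\hat g_1(X_1)-g_1(X_1)|=O\!\left(n^{-1/2}\right),
\end{align*}
which yields $b_n\,\mathbb{E}D_n\to0$ and hence $\hat\sigma^2-\tilde\sigma^2\to0$ in $L_1$, thus in probability.

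The main obstacle is this key estimate. I would split $\hat g_1(X_1)-g_1(X_1)$ into the part estimating $\theta$ (a genuine $U$-statistic whose centered $L_2$-norm is $O(n^{-1/2})$ by the variance bounds underlying Theorem \ref{asynom}) and the conditional part $\frac{1}{\cdot}\sum h(X_1,X_{i_1},\ldots,X_{i_{m-1}})-\mathbb{E}\bigl(h(X_1,Y_2,\ldots,Y_m)\mid X_1\bigr)$. The delicate point is this conditional $U$-statistic: evaluating the kernel at the sample point $x=X_1$ introduces dependence between the outer variable and the summation indices, and the summands are only $L_1$ NED. I would (i) isolate the $O(n^{m-2})$ index tuples containing $1$, whose total contribution is $O(1/n)$ by boundedness, and (ii) bound the $L_2$-norm of the remaining conditional $U$-statistic by a variance computation that uses the $L_2$-variation condition to pass from the NED sequence to its independent-copy approximation, the absolute regularity of $(Z_n)$ to control the resulting covariances, and the rates $a_l=O(l^{-\delta-3})$ with $\delta>11$ to sum the error terms. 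This delivers the $L_2$-rate $O(n^{-1/2})$, hence the claimed $L_1$ bound, and closes the argument.
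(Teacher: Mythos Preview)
Your plan is sound and leads to a correct proof, but it follows a genuinely different route from the paper's.

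Both proofs start identically: split $\hat\sigma^2=\tilde\sigma^2+(\hat\sigma^2-\tilde\sigma^2)$ and hand the oracle piece $\tilde\sigma^2$ to \cite{deJong.2000}. The divergence is in the replacement term. The paper does \emph{not} factor out a uniform error $D_n$; instead it inserts the full Hoeffding expansion of $g_1(x)-\hat g_1(x)$ directly into $\sum_r\kappa(|r|/b_n)\frac1n\sum_j(\hat g_1\hat g_1-g_1 g_1)$, producing $4(m-1)+2$ cross terms $I_1,\ldots,I_{4m-2}$. The leading term $I_1$ (containing $\frac1n\sum_i g_1(X_i)$) is treated by Lemma~C.1 of \cite{Dehling.2015}, which requires first transferring the $L_1$ NED hypothesis into a P-NED statement with specific rates. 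The higher terms $I_{2k},I_{2k+1}$ are handled by Cauchy--Schwarz on the $j$-sum, after which the weights $\kappa$ survive as constants $c_{i_1,i_{k+2}}=\sum_{j_1}\kappa(|i_1-j_1|/b_n)\kappa(|i_{k+2}-j_1|/b_n)=O(b_n)$ inside an application of Lemma~\ref{finalcov}; the paper then checks that $\sum_l l(\beta(l)+A_l)=O(n^\gamma)$ for some $\gamma<1/2$, which together with $b_n/\sqrt n\to0$ forces each $I_{2k}\to0$.

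Your approach is more modular: the crude bound $|\hat\sigma^2-\tilde\sigma^2|\le C\,b_n D_n$ decouples the kernel $\kappa$ from the $U$-statistic approximation entirely, and the whole replacement step collapses to the single estimate $\mathbb{E}D_n=O(n^{-1/2})$. This buys you two things: you avoid the detour through P-NED and \cite{Dehling.2015}'s Lemma~C.1, and you never need the weighted version of Lemma~\ref{finalcov}. The price is that the ``delicate point'' you flag---the conditional $U$-statistic $\frac{1}{n^{k-1}}\sum_{i_1<\cdots<i_{k-1}} g_k(X_j,X_{i_1},\ldots,X_{i_{k-1}})$ with the evaluation index $j$ shared between both factors of the second moment---requires a variant of the counting in Lemma~\ref{finalcov} where one index is repeated. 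This does go through (the repeated index either sits at an edge of the ordered $2k$-tuple, forcing one edge gap to zero and giving $O(n^{2k-2})$ tuples summed against $\sum_l(\beta(l)+A_l)<\infty$, or it is interior and the count is $O(l\,n^{2k-3})$ summed against $\sum_l l(\beta(l)+A_l)$), yielding the claimed $L_2$-rate $O(n^{-1/2})$; but you should make this counting explicit rather than leave it at the level of ``a variance computation''. The paper's approach sidesteps this particular bookkeeping by never isolating $D_n$, at the cost of the longer term-by-term expansion.
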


\vspace*{0.5cm}

In the following we no longer want to consider $U$-statistics but $U$-processes. That is we have a process $(U_n(t))_{t\in\mathbb{R}}$, where $t$ is the process parameter occurring in $h$. An example is given by the empirical kernel distribution $(H_n(t))_{t\in\mathbb{R}}$.

\vspace*{0.1cm}

\begin{defin}
	Let $h:\mathbb{R}^{m+1}\rightarrow\mathbb{R}$ be a measurable and bounded function, symmetric in the first $m$ arguments and non-decreasing in the last. Suppose that for all $x,y \in \mathbb{R}$ we have $\lim\limits_{t\rightarrow \infty}h(x,y,t)=1$ and $\lim\limits_{t\rightarrow -\infty}h(x,y,t)=0.$ 
	We call the process $(U_n(t))_{t\in \mathbb{R}}$ empirical $U$-distribution function. As $U$-distribution function we define  $U(t):=\mathbb{E}\left(h(Y_1,\ldots,Y_m,t)\right)$ for independent copies $Y_1,\ldots,Y_m$ of $X_1$. Then the empirical process is defined as 
	
	$$(\sqrt{n}(U_n(t)-U(t)))_{t \in \mathbb{R}}.$$
\end{defin}

Again, we use the Hoeffding decomposition in our proofs.
For fixed $t$ we have
\begin{align*}
U_n(t)=\frac{1}{\binom{n}{m}}\sum_{1\leq i_1<\ldots<i_m\leq n}h(X_{i_1},\ldots,X_{i_m},t)
\end{align*}
and therefore we can decompose $U_n(t)$  analogously to Definition \ref{hoeff}.

Additionally, the extended variation has to be transformed. \cite{Fischer.2016} have used the extended uniform variation condition, which has the same properties as the extended variation condition.

\vspace*{0.1cm}

Now we want to establish an invariance principle for the $U$-process. For near epoch dependent sequences on absolutely regular processes it has already been shown by \cite{Deh2002} and a result for strong mixing can be found in \cite{Wen2011}. Nevertheless, these results only consider the case of a bivariate kernel and therefore exclude such examples as given in Section 1.

\vspace*{0.1cm}
From now on we limit the considered $U$-process to $H_n$, that is $U_n(t)$ has the kernel $g(x_1,\ldots,x_m,t)=1_{[h(x_1,\ldots,x_m)\leq t]}$. Therefore, $U(t)=\mathbb{E}\left(1_{[h(Y_1,\ldots,Y_m)\leq t]}\right)=\mathbb{P}(h(Y_1,\ldots,Y_m)\leq t)=H_F(t)$ and since $H_F$ has density $h_F<\infty$, $H_F$ is Lipschitz-continuous.

\begin{theo}\label{invpri}
	$\newline$
	Let $h$ be a kernel with distribution function $H_F$ and related density $h_F<\infty$.
	Moreover let $g_1$ be the first term of the Hoeffding decomposition of $H_n$.
	Let $(X_n)_{n\in\mathbb{N}}$ be NED with approximation constants $(a_l)_{l\in\mathbb{N}}$ on an absolutely regular process $(Z_n)_{n\in\mathbb{Z}}$ with mixing coefficients $(\beta(l))_{l\in\mathbb{N}}$ with $\sum_{l=1}^\infty l^2 \beta^{\frac{\delta}{2+\delta}}(l)<\infty$ for a $0<\delta<1$. Moreover, let hold that $\sum_{l=1}^\infty l^2 a_l^{\frac{\delta}{2+2\delta}}<\infty$ . 
	Then
	\begin{align*}
	\left(\frac{m}{\sqrt{n}}\sum_{i=1}^n g_1(X_i,t)\right)_{t\in\mathbb{R}}\stackrel{D}{\longrightarrow}\left(W(t)\right)_{t\in \mathbb{R}},
	\end{align*}
	where $W$ is a Gaussian process having continuous path with probability 1.
\end{theo}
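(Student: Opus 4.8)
The plan is to prove the statement as a functional central limit theorem for the univariate-index process $S_n(t):=\frac{m}{\sqrt n}\sum_{i=1}^n g_1(X_i,t)$ via the classical two-step route: convergence of the finite-dimensional distributions together with tightness in the Skorohod space $D[-\infty,\infty]$. Since $H_F$ has bounded density $h_F$ and is therefore continuous, I would first reparametrise the index by $u=H_F(t)$, so that the parameter set becomes $[0,1]$ and all increment bounds can be phrased through the continuous quantity $H_F(t)-H_F(s)$; this makes the limit automatically free of fixed jumps. Throughout, the decisive structural point is that although $h$ has arbitrary dimension $m$, its first Hoeffding projection $g_1(x,t)=\mathbb P(h(x,Y_2,\dots,Y_m)\le t)-H_F(t)=:G_x(t)-H_F(t)$ is a \emph{univariate} bounded kernel in $x$, so $S_n$ is a centred additive functional of the single NED sequence $(X_i)$.

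A useful preliminary observation is that the increment $g_1(X_1,t)-g_1(X_1,s)=(G_{X_1}(t)-G_{X_1}(s))-(H_F(t)-H_F(s))$ is the centring of a variable taking values in $[0,1]$, whence $\var(g_1(X_1,t)-g_1(X_1,s))\le\mathbb E[G_{X_1}(t)-G_{X_1}(s)]=H_F(t)-H_F(s)$; this is the seed of all the increment estimates below. For the finite-dimensional distributions I would invoke the Cram\'er--Wold device: fixing $t_1<\dots<t_k$ and reals $\lambda_1,\dots,\lambda_k$, the linear combination equals $\frac{m}{\sqrt n}\sum_{i=1}^n f(X_i)$ with $f(x)=\sum_j\lambda_j g_1(x,t_j)$ bounded. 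Using the bounded density $h_F$, the $L_1$ near epoch dependence of $(X_n)$ on $(Z_n)$ transfers to a near-epoch (indeed P-NED) approximability of $(f(X_i))_i$ — precisely the mechanism anticipated by the remark in Section~1 — so the central limit theorem for partial sums of NED functionals of absolutely regular processes (the linear-part special case underlying Theorem~\ref{asynom}) yields asymptotic normality of $\frac{m}{\sqrt n}\sum_i f(X_i)$. Hence $(S_n(t_1),\dots,S_n(t_k))$ converges to a centred Gaussian vector with covariances $\Gamma(s,t)=m^2\sum_{r\in\mathbb Z}\cov(g_1(X_0,s),g_1(X_r,t))$, the summability $\sum_l l^2\beta^{\delta/(2+\delta)}(l)<\infty$ guaranteeing that the series converge.

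The heart of the proof, and the step I expect to be the main obstacle, is tightness. I would establish a moment inequality for two adjacent increments: for $r<s<t$,
\begin{align*}
\mathbb E\!\left[\Big(\sum_{i=1}^n \big(g_1(X_i,s)-g_1(X_i,r)\big)\Big)^{2}\Big(\sum_{i=1}^n \big(g_1(X_i,t)-g_1(X_i,s)\big)\Big)^{2}\right]
\le C\,n^{2}\big(H_F(t)-H_F(r)\big)^{2},
\end{align*}
which, after dividing by $n^2$ (from the normalisation $(m/\sqrt n)^4$), matches the criterion of Billingsley for $D[0,1]$ with exponent $2>1$ and the continuous dominating measure $u\mapsto H_F(t)$, thereby forcing tightness. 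Expanding the product generates fourth-order covariances of the centred bounded variables $g_1(X_i,\cdot)$ over four summation indices. To bound them I would split each factor into its near-epoch approximation $\mathbb E(g_1(X_i,\cdot)\mid\mathcal G_{i-l}^{i+l})$, measurable with respect to a block of $(Z_n)$, plus an error term controlled by the NED constants $a_l$ combined through interpolation with the (bounded) higher moments, which produces the Hölder exponent $\delta/(2+2\delta)$; the separated approximating blocks are then handled by a Davydov-type covariance inequality for the absolutely regular process, producing factors $\beta^{\delta/(2+\delta)}(l)$. Counting index tuples at prescribed temporal gaps contributes the $l^2$ weights, so the hypotheses $\sum_l l^2\beta^{\delta/(2+\delta)}(l)<\infty$ and $\sum_l l^2 a_l^{\delta/(2+2\delta)}<\infty$ are exactly what render the bound uniform in $n$. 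The genuine difficulty, relative to the bivariate results of \cite{Deh2002} and \cite{Wen2011}, lies precisely in this bookkeeping for a kernel of arbitrary dimension $m$: controlling the oscillation of $g_1$ in $t$ simultaneously with the near-epoch approximation error across all four indices, and matching the two decay rates $(\beta(l))$ and $(a_l)$ with the correct exponents.

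Finally, combining the Gaussian finite-dimensional convergence with tightness gives $S_n\Rightarrow W$ in $D[-\infty,\infty]$. Because the increment bound scales with $H_F(t)-H_F(r)$ and $H_F$ is continuous, the sequence has asymptotically negligible maximal jumps, so the limit $W$ is a Gaussian process admitting a version with almost surely continuous paths, as claimed.
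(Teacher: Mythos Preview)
Your outline is technically sound --- finite-dimensional convergence via Cram\'er--Wold plus a fourth-moment tightness bound is the standard route, and is in fact the content of Theorem~4.8 of \cite{Deh2002}. But the paper does not reprove any of this: it simply observes that since $g_1(x,t)$ is a \emph{univariate} kernel in $x$ (regardless of the original dimension $m$ of $h$), one may set $g_t(x)=g_1(x,t)$ and $G(t)=H_F(t)$ and invoke Theorem~4.8 of \cite{Deh2002} verbatim; the required properties of $g_t$ and $G$ are inherited, and the proof is declared complete in one sentence.

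The point worth flagging is your final paragraph. You write that ``the genuine difficulty, relative to the bivariate results of \cite{Deh2002} and \cite{Wen2011}, lies precisely in this bookkeeping for a kernel of arbitrary dimension $m$.'' This is a misdiagnosis: for the \emph{linear} part there is no such difficulty at all, because the Hoeffding projection already collapses the dependence on $m$ to a one-dimensional kernel. You yourself note this structural point earlier, but then do not exploit it --- the correct conclusion is that the existing univariate invariance principle applies as-is, not that its proof must be redone with extra bookkeeping. The place where arbitrary $m$ genuinely creates new work in the paper is in controlling the degenerate terms $g_k$, $k\ge 2$ (Lemmas~\ref{covineq2}, \ref{finalcov}, \ref{remterm}), not in the linear part treated here. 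So your proposal is correct but reproduces a result that can simply be cited, and locates the novelty in the wrong place.
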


\begin{Rem}
	Note that the condition $\sum_{l=1}^\infty l^2 \left(L\sqrt{2A_l}\right)^{\frac{\delta}{1+\delta}}<\infty$ of \cite{Deh2002}, where $L$ is the variation constant of the kernel $g(x_1,\ldots,x_m)=1_{\left[h(x_1,\ldots,x_m)\leq t\right]}$ and $A_l=\sqrt{2\sum_{i=l}^\infty a_i}$, follows directly from the condition $\sum_{l=1}^\infty l^2 a_l^{\frac{\delta}{2+2\delta}}<\infty$ and is therefore omitted here.
		\end{Rem}

This Theorem can be proven in the same way as Theorem 4.8 of \cite{Deh2002} by choosing $g_t(x)=g_1(x,t)$ and $G(t)=H_F(t)$ (preserving the properties of the single functions) and is therefore omitted.

\vspace*{0.15cm}
\section{Main theorem}
Now we will state the main theorem of our paper, the asymptotic normality of $GL$-statistics of $L_1$-near epoch dependent processes on absolutely regular random variables. Under independence this result has been proven by \cite{Serf1984}, some of the lemmata can also be found in \cite{Cho1988}. Under strong mixing an analogous result can be found in \cite{Fischer.2016}.

\begin{theo}\label{main}
$\newline$
Let $h(x_1,\ldots,x_m)$ be a Lipschitz-continuous kernel with distribution function
$H_F$ and related density $0<h_F <\infty$ and for all $2\leq k \leq m$ and all $i_1<i_2<\ldots<i_m$ let $h_{F;X_{i_2},\ldots,X_{i_k}}$ be bounded. Moreover, let $J$ be a function with $J(t)=0$ for $t\notin\left[\alpha,\beta\right]$ , $0<\alpha<\beta<1$, and in $\left[\alpha,\beta\right]$ let $J$ be bounded and a.e. continuous concerning the Lebesgue-measure and a.e. continuous concerning $H_F^{-1}$.
Additionally, let  $X_1,\ldots,X_n$ be NED with approximation constants $(a_l)_{l\in\mathbb{N}}$ on an absolutely regular process $(Z_n)_{n\in\mathbb{Z}}$ with mixing coefficients $(\beta(l))_{l\in\mathbb{N}}$ with $\sum_{l=1}^\infty l^2 \beta^{\frac{\gamma}{2+\gamma}}(l)<\infty$ for a $0<\gamma<1$. Moreover, let be  $\sum_{l=1}^\infty l^2a_l^{\frac{\gamma}{2+2\gamma}}<\infty$ for a $\delta>8$.
Then for the $GL$-statistics $T(H_n)$ it holds that
\begin{align*}
\sqrt{n}\left(T(H_n)-T(H_F)\right)\stackrel{\mathcal{D}}{\longrightarrow}N(0,\sigma_{GL}^2),
\end{align*}
where 
\begin{align*}
\sigma_{GL}^2=&m^2\big(\var\left(\mathbb{E}\left(A(Y_1,\ldots,Y_{m})\vert Y_1=X_1\right)\right)\\
&+2\sum_{j=1}^\infty \cov\left( \mathbb{E}\left(A(Y_1,\ldots,Y_{m})\vert Y_1=X_1\right),\mathbb{E}\left(A(Y_1,\ldots,Y_{m})\vert Y_{1}=X_{j+1}\right)\right)\big)
\end{align*}
with independent copies $Y_1,\ldots,Y_m$ of $X_1$
and
\begin{align*}
A(x_1,\ldots,x_m)=&-\int_{-\infty}^{\infty}{\left(1_{\left[h(x_1,\ldots,x_m)\leq y\right]}-H_F(y)\right)J(H_F(y))dy}\\
&+\sum_{i=1}^{d}{a_i\frac{p_i-1_{\left[h(x_1,\ldots,x_m)\leq H_F^{-1}(p_i)\right]}}{h_F(H_F^{-1}(p_i))}}.
\end{align*}
\end{theo}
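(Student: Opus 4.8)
The strategy is to linearize the functional $T$ and to recognize the leading term as a $U$-statistic with kernel $A$, so that the conclusion follows from the central limit theorem for $U$-statistics, Theorem \ref{asynom}. I would begin by splitting the error into the smooth and the discrete part,
\begin{align*}
T(H_n)-T(H_F)=\int_0^1\bigl(H_n^{-1}(t)-H_F^{-1}(t)\bigr)J(t)\,dt+\sum_{i=1}^d a_i\bigl(H_n^{-1}(p_i)-H_F^{-1}(p_i)\bigr),
\end{align*}
and treat the two contributions by parallel arguments. Throughout, $H_n$ is the empirical $U$-distribution function of Section 2 and $H_F$ is Lipschitz with density $h_F$ bounded away from $0$ on the relevant range, which is exactly what makes the inversion stable.

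For the integral part I would use the change of variables $y=H_F^{-1}(t)$ together with integration by parts to rewrite it, up to a remainder, as the linear functional $-\int_{-\infty}^\infty\bigl(H_n(y)-H_F(y)\bigr)J(H_F(y))\,dy$; this is the classical Serfling identity and exploits the boundedness and a.e.\ continuity of $J$. For each discrete term I would establish a Bahadur-type representation for the $U$-quantile,
\begin{align*}
H_n^{-1}(p_i)-H_F^{-1}(p_i)=-\frac{H_n\bigl(H_F^{-1}(p_i)\bigr)-p_i}{h_F\bigl(H_F^{-1}(p_i)\bigr)}+o_P(n^{-1/2}),
\end{align*}
which is valid because $h_F(H_F^{-1}(p_i))>0$. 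Combining both linearizations, and using that $H_n(y)-H_F(y)$ is the centered $U$-statistic with kernel $1_{[h(\cdot)\le y]}-H_F(y)$ so that the integral and the finite sum over $y$ commute with the kernel average, shows that
\begin{align*}
\sqrt{n}\bigl(T(H_n)-T(H_F)\bigr)=\sqrt{n}\,\frac{1}{\binom{n}{m}}\sum_{1\le i_1<\dots<i_m\le n}\bigl(A(X_{i_1},\dots,X_{i_m})-\theta_A\bigr)+o_P(1),
\end{align*}
where $\theta_A=\mathbb{E}\,A(Y_1,\dots,Y_m)$ and $A$ is precisely the kernel in the statement.

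It then remains to apply Theorem \ref{asynom} to the bounded kernel $A$. I would verify that $A$ inherits boundedness from the boundedness of $J$ and of the conditional densities $h_{F;X_{i_2},\dots,X_{i_k}}$, and that it satisfies the extended variation condition: the indicator parts are controlled through the Lipschitz continuity of $H_F$ and of the conditional distribution functions of $h$, again via the boundedness of the $h_{F;\cdot}$, exactly as the variation condition for the kernel of $H_n$ was transferred in Section 2. Since the mixing and approximation conditions imposed in the theorem are calibrated to be at least as strong as those required by Theorem \ref{asynom}, the latter applies and yields asymptotic normality with variance $m^2\sigma_A^2$, where $\sigma_A^2$ is the long-run variance of the first Hoeffding projection $g_1^A(X_1)=\mathbb{E}(A(Y_1,\dots,Y_m)\mid Y_1=X_1)-\theta_A$; this is exactly $\sigma_{GL}^2$.

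The main obstacle is the uniform control of the two linearization remainders, that is, showing that the error terms are genuinely $o_P(n^{-1/2})$ rather than merely $O_P(n^{-1/2})$. Both the oscillation of $H_n^{-1}$ near the fixed quantiles $p_i$ and the integrated remainder in the smooth part must be bounded through the modulus of continuity of the $U$-empirical process $\sqrt{n}(H_n-H_F)$. I would obtain the required tightness and equicontinuity from the invariance principle Theorem \ref{invpri}, which guarantees that $\sqrt{n}(H_n(t)-H_F(t))$ converges weakly to a Gaussian process with almost surely continuous paths; combined with the Lipschitz continuity of $H_F$ this bounds the quantile oscillations and closes the Bahadur step. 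Finally, the degenerate higher-order Hoeffding terms of the $U$-statistic with kernel $A$ must be shown to be $o_P(n^{-1/2})$, which follows from the covariance bounds for degenerate $U$-statistics under near epoch dependence already used in the proof of Theorem \ref{asynom}.
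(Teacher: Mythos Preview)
Your proposal is correct and follows essentially the same three-step architecture as the paper: linearize the smooth part (the paper packages this as condition (i), handled via Lemma 8.2.4.A of \cite{serf1980} together with the Glivenko--Cantelli bound Corollary \ref{glican}), establish the Bahadur representation for the $U$-quantiles (condition (ii), Lemma \ref{bahad}), and apply Theorem \ref{asynom} to the kernel $A$ (condition (iii)), with \cite{Fischer.2016} supplying the verification that $A$ is bounded and satisfies the extended variation condition. The only minor deviation is that you propose to control the Bahadur remainder through the modulus of continuity of the empirical $U$-process via Theorem \ref{invpri}, whereas the paper's Lemma \ref{bahad} argues more directly with variance bounds on $Z_n(t)-Z_n(0)$ and then invokes Ghosh's lemma; both routes are standard and either closes the gap.
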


Analogous to the case of $U$-statistics, to use this theorem for example for confidence intervals in applications the problem arises how to handle the asymptotic variance $\sigma_{GL}^2$. Normally, this is unknown due to the unknown conditional expected values and the unknown distribution. Therefore, it is necessary to find an estimator for $\sigma^2_{GL}$. The following corollary closes this gap. It is stated for $L_1$-NED but it is also possible to show an analogous result under strong mixing such that it is applicable for the theorem of \cite{Fischer.2016}.

\begin{coro}\label{kern}
	Let $h:\mathbb{R}^m\rightarrow \mathbb{R}$ be a Lipschitz-continuous kernel.
	
	Moreover, let $(X_n)_{n\in\mathbb{N}}$ be NED with approximation constants $(a_l)_{l\in\mathbb{N}}$ on an absolutely regular process $(Z_n)_{n\in\mathbb{Z}}$ with mixing coefficients $(\beta(l))_{l\in\mathbb{N}}$ and let a $\delta>11$  exist, such that $\sum_{l=1}^\infty l\beta^{2/(2+\delta)(l)}<\infty$ and $a_l=O\left(l^{-\delta-3}\right)$. The weight function $\kappa$ and the bandwidth $b_n$ should fulfil Assumption \ref{asskern}. Then it holds that $\hat{\sigma}_{GL}^2\stackrel{\mathbb{P}}{\longrightarrow} \sigma_{GL}^2$ for $n\rightarrow \infty$ for the long-run variance estimator 
	\begin{align*}
	\hat{\sigma}_{GL}^2=\sum_{r=-(n-1)}^{n-1}\kappa\left(\frac{\vert r \vert }{b_n}\right)\frac{1}{n}\sum_{i=1}^{n-r}\hat{A}_1(X_i)\hat{A}_1(X_{i+r}),
	\end{align*}
	with 
	\begin{align*}
	\hat{A}_1(x)=\frac{1}{n^{m-1}}&\sum_{1\leq i_1<\ldots<i_{m-1}\leq n}A(x,X_{i_1},\ldots,X_{i_{m-1}})\\
	&-\frac{1}{n^m}\sum_{1\leq i_1<\ldots <i_m\leq n}A(X_{i_1},\ldots,X_{i_m})
	\end{align*}
	being the estimator for the first term of the Hoeffding-decomposition of $A$.
	
\end{coro}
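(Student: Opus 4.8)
The plan is to recognise that the estimator $\hat{\sigma}_{GL}^2$ is structurally identical to the long-run variance estimator $\hat{\sigma}^2$ of Theorem \ref{kernest}, only with the kernel $A$ in the role of $h$ and its empirical first Hoeffding kernel $\hat{A}_1$ in the role of $\hat{g}_1$. Since $A(x_1,\ldots,x_m)$ depends on its arguments only through the scalar $z=h(x_1,\ldots,x_m)$, it is a measurable symmetric kernel of dimension $m$, so the whole corollary will follow from Theorem \ref{kernest} applied to $A$ once I verify that theorem's hypotheses for $A$: boundedness, the extended variation condition and the $L_2$-variation condition. The mixing and approximation assumptions ($\delta>11$, summability of $\beta$, $a_l=O(l^{-\delta-3})$) and Assumption \ref{asskern} are imposed verbatim, so the entire substance of the argument is the regularity of $A$.

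First I would check that $A$ is bounded. Writing $A=\Psi(h(\cdot))+\Phi(h(\cdot))$, the integral part $\Psi$ is supported where $J(H_F(y))\neq 0$, i.e. $y\in[H_F^{-1}(\alpha),H_F^{-1}(\beta)]$, a bounded interval because $0<\alpha<\beta<1$ and $h_F>0$; on it the integrand is dominated by $\lVert J\rVert_\infty$. The discrete part $\Phi$ is a finite sum whose denominators $h_F(H_F^{-1}(p_i))$ are strictly positive, hence bounded as well.

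Next I would establish the (extended) variation condition. For the integral part one computes, for $z'<z$,
\begin{align*}
\Psi(z)-\Psi(z')=-\int_{z'}^{z}J(H_F(y))\,dy,
\end{align*}
so $\Psi$ is Lipschitz with constant $\lVert J\rVert_\infty$, and since $h$ is Lipschitz, $\Psi\circ h$ satisfies both forms of the variation condition trivially. The delicate part is $\Phi\circ h$, which has jumps at the levels $z=H_F^{-1}(p_i)$ and is not Lipschitz. Here the expectation smooths the jumps: perturbing the arguments by $\epsilon$ (respectively a single coordinate by $\delta$ in the extended version) moves $h$ by at most $C\epsilon$ by Lipschitz-continuity, so the supremum is nonzero only on the event that $h$ at the reference point lies within $C\epsilon$ of some $H_F^{-1}(p_i)$; this event has probability $O(\epsilon)$ by boundedness of the density $h_F$ and, for the extended condition, of the conditional densities $h_{F;X_{i_2},\ldots,X_{i_k}}$, while each jump contributes at most $a_i/h_F(H_F^{-1}(p_i))$. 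Summing over the finitely many $i$ yields the bound $L\epsilon$. The $L_2$-variation condition then follows immediately from boundedness together with the variation condition, exactly as in the Remark following the $L_2$-variation definition.

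Finally I would invoke Theorem \ref{kernest} with kernel $A$: $\hat{A}_1$ estimates the first Hoeffding projection $A_1(x)=\mathbb{E}(A(Y_1,\ldots,Y_m)\mid Y_1=x)$, and the kernel-weighted sum converges in probability to the long-run variance of $A_1$, which is precisely the bracketed term in $\sigma_{GL}^2$ of Theorem \ref{main} (the overall factor $m^2$ being the one attached to the linear Hoeffding part, and one should make sure it is carried through the normalisation). I expect the main obstacle to be exactly the extended variation condition for the discontinuous discrete part of $A$: the jump contributions are controlled only through the density-smoothing argument above, so the proof genuinely needs boundedness of $h_F$ and of the conditional densities $h_{F;X_{i_2},\ldots,X_{i_k}}$ — assumptions that must be inherited as standing hypotheses from Theorem \ref{main}, even though the corollary states only Lipschitz-continuity of $h$.
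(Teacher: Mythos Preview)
Your proposal is correct and follows essentially the same route as the paper: reduce to Theorem \ref{kernest} applied to the kernel $A$, and then verify that $A$ is bounded and satisfies the extended variation and $L_2$-variation conditions. The only minor difference is that you deduce the $L_2$-variation condition from boundedness plus the ordinary variation condition via the Remark after the $L_2$-variation definition, whereas the paper carries out a direct computation for it; your shortcut is legitimate, and your closing remark that the argument tacitly needs the density hypotheses of Theorem \ref{main} (boundedness of $h_F$ and of the $h_{F;X_{i_2},\ldots,X_{i_k}}$) is on point.
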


The proofs of Theorem \ref{main} and Corollary \ref{kern} can be found in Section 5.

\section{Main proof}

In this section we will prove the missing results to finally assemble them to the main proof. Preliminary results needed in the single proofs can be found in the appendix.
\vspace*{0.1cm}

\begin{proof}[Proof of Theorem \ref{asynom}]
	$\newline$
	The proof makes use of the Hoeffding decomposition
	\begin{align*}
	\sqrt{n}(U_n-\theta)=\sqrt{n}\sum_{j=1}^m{\binom{m}{j}\frac{1}{\binom{n}{j}}S_{jn}}.
	\end{align*}
	We show that the linear part $\frac{m}{\sqrt{n}}\sum_{i=1}^{n}g_1(X_i)$ is asymptotically normal and that the remaining terms converge to $0$ in probability.
	$\newline$

	We know that $g_1$ is bounded because $h$ is bounded. Using Lemma 2.1.7 of \cite{Wen2011} we also know, that $g_1$ is NED with approximation constants $a'_l=Ca_l^{\frac{1}{2}}$. 
	
	Together with the above assumptions it is
	\begin{align*}
	\sum_{l=1}^\infty\beta(l)<\infty\qquad \text{and} \qquad \sum_{l=1}^{\infty}a'_l<\infty.
	\end{align*}
	
	We have made this considerations to finally apply Theorem 2.3 of \cite{Ibragimov.1961} getting
	
	\begin{align*}
	\frac{m}{\sqrt{n}}\sum_{i=1}^n g_1(X_1)\stackrel{D}{\longrightarrow}N(0,m^2\sigma^2)
	\end{align*}
	with $\sigma^2=\var(g_1(X_1))+2\sum_{j=1}^{\infty}\cov(g_1(X_1),g_1(X_{1+j}))<\infty$.
	
	For the remaining terms we want to use Lemma \ref{finalcov} with constants $c_{i_1,i_{k+1}}=1$, needing $\sum_{l=0}^nl\left(\beta(l)+A_l\right)=O(n^\gamma)$ for a $\gamma\geq 0$.
	
	Using
	\begin{align*}
	A_l=\left(2\sum_{i=l}^\infty a_i\right)^{\frac{1}{2}}\leq \left(2C\sum_{i=l}^\infty i^{-\delta-2}\right)^{\frac{1}{2}}=O\left(n^{-\frac{\delta+1}{2}}\right)
	\end{align*}
	we get
	\begin{align*}
	&\sum_{l=0}^nl\left(\beta(l)+A_l\right)\leq C \sum_{l=1}^nl\left(l^{-\delta}+n^{-\frac{\delta+1}{2}}\right)\leq C\sum_{l=1}^nl^{-\delta+1}+n^{-\frac{\delta+1}{2}}\sum_{l=1}^n l\\
	&= O(n^\gamma)+O\left(n^{2-\frac{\delta+1}{2}}\right)= O(n^\gamma)
	\end{align*}
	and so Lemma \ref{finalcov} is applicable and the convergence of the remaining terms is analogous to the strong mixing case in \cite{Fischer.2016} and therefore omitted.
	
\end{proof}

\begin{proof}[Proof of Theorem \ref{kernest}]
		$\newline$
	By decomposing the estimator into two parts we can apply the results of \cite{deJong.2000}:
	\begin{align*}
	\hat{\sigma}^2=&\sum_{r=-(n-1)}^{n-1}\kappa\left(\frac{\vert r\vert}{b_n}\right)\frac{1}{n}\sum_{j=1}^{n-\vert r \vert}\Big(\hat{g}_1(X_i)\hat{g}_1(X_{i+\vert r \vert})\\
	&\phantom{\sum_{r=-(n-1)}^{n-1}\kappa\left(\frac{\vert r\vert}{b_n}\right)\frac{1}{n}\sum_{j=1}^{n-\vert r \vert}}-g_1(X_i)g_1(X_{i+\vert r \vert})+h_1(X_i)h_1(X_{i+\vert r \vert})\Big)\\
	=&\sum_{r=-(n-1)}^{n-1}\kappa \left(\frac{\vert r \vert}{b_n}\right)\frac{1}{n}\sum_{j=1}^{n-\vert r \vert } g_1(X_i)g_1(X_{i+\vert r \vert})\\
	&+\sum_{r=-(n-1)}^{n-1}\kappa \left(\frac{\vert r \vert}{b_n}\right)\frac{1}{n}\sum_{j=1}^{n-\vert r \vert } \left(\hat{g}_1(X_i)\hat{g}_1(X_{i+\vert r \vert})-g_1(X_i)g_1(X_{i+\vert r\vert})\right)	
	\end{align*}
	
	For the first term \cite{deJong.2000} showed that it converges to $\sigma^2$ in probability. Therefore, it remains to show
	\begin{align*}
	\mathbb{E}\Big\vert\sum_{r=-(n-1)}^{n-1}\kappa \left(\frac{\vert r \vert}{b_n}\right)\frac{1}{n}\sum_{j=1}^{n-\vert r \vert } \left(\hat{g}_1(X_i)\hat{g}_1(X_{i+\vert r \vert})-g_1(X_i)g_1(X_{i+\vert r\vert})\right)\Big\vert\longrightarrow 0.
	\end{align*}
	
	Let us first expand $g_1(x)-\hat{g}_1(x)$ into single terms.
	
	\begin{align*}
	& g_1(x)-\hat{g}_1(x)\\
	=& g_1(x)-\frac{1}{n^{m-1}}\sum_{1\leq i_1<\ldots <i_{m-1}\leq n}h(x, X_{i_1},\ldots, X_{m-1})\\
	&\phantom{g_1(x)}+\frac{1}{n^m}\sum_{1\leq i_1<\ldots < i_m\leq n}h(X_{i_1},\ldots,X_{i_m})\\
	=& g_1(x)- \frac{1}{n^{m-1}}\sum_{1\leq i_1<\ldots <i_{m-1}\leq n}\Big(g_m(x,X_{i_1},\ldots,X_{i_{m-1}})+g_1(x)+\sum_{j=1}^{m-1}g_1(X_{i_j})\\
	&\phantom{g_1(x)- \frac{1}{n^{m-1}}\sum_{1\leq i_1<\ldots <i_{m-1}}}+\ldots+\sum_{1\leq j_1<\ldots<j_{m-2}\leq m-1}g_{m-1}(x,X_{i_{j_1}},\ldots,X_{i_{j_{m-2}}})\\
	&\phantom{g_1(x)- \frac{1}{n^{m-1}}\sum_{1\leq i_1<\ldots <i_{m-1}}} \sum_{1\leq j_1<\ldots<j_{m-1}\leq m-1}g_{m-1}(X_{i_{j_1}},\ldots,X_{i_{j_{m-1}}})\Big)\\
	&+ \frac{1}{n^m}\sum_{1\leq i_1<\ldots < i_m\leq n}\Big(g(X_{i_1},\ldots,X_{i_m})+\sum_{j=1}^m g(X_{i_j})\\
	&\phantom{\frac{1}{n^m}\sum_{1\leq i_1<\ldots < i_m\leq n}}	\sum_{1\leq j_1<\ldots<j_{m-1}\leq m}g_{m-1}(X_{i_{j_1}},\ldots,X_{i_{j_{m-1}}})\Big)\\
	=& -\frac{1}{n^{m-1}}\sum_{1\leq i_1<\ldots <i_{m-1}\leq n}g_m(x,X_{i_1},\ldots,X_{i_{m-1}})-(m-1)\frac{1}{n}\sum_{i=1}^ng_1(X_i)\\
	&-\ldots -2 \frac{1}{n^{m-2}}\sum_{1\leq i_1< \ldots <i_{m-2}\leq n}g_{m-1}(x,X_{i_1},\ldots,X_{i_{m-2}})\\
	&-\frac{1}{n^{m-1}}\sum_{1\leq i_1<\ldots <i_m\leq n}g_{m-1}(X_{i_1},\ldots,X_{i_{m-1}})\\
	&+\frac{1}{n^m}\sum_{1\leq i_1<\ldots <i_m\leq n}g_m(X_{i_1},\ldots,X_{i_m})+m\frac{1}{n}\sum_{i=1}^ng_1(X_i)\\
	&+\ldots + 2\frac{1}{n^{m-1}}\sum_{1\leq i_1<\ldots <i_{m-1}\leq n}g_{m-1}(X_{i_1},\ldots,X_{i_{m-1}})\\
	=& \frac{1}{n}\sum_{i=1}^ng_1(X_i)-(m-2)\frac{1}{n}\sum_{i=1}^ng_2(x,X_i)+(m-2)\frac{1}{n^2}\sum_{1\leq i<j\leq n}g_2(X_i,X_j)\\
	&-\ldots-2\frac{1}{n^{m-2}}\sum_{1\leq i_1<\ldots<i_{m-2}\leq n}g_{m-1}(x,X_{i_1},\ldots,X_{i_{m-2}})\\
	&+2\frac{1}{n^{m-1}}\sum_{1\leq i_1<\ldots <i_{m-1}\leq n}g_{m-1}(X_{i_1},\ldots,X_{i_{m-1}})\\
	&- \frac{1}{n^{m-1}}\sum_{1\leq i_1<\ldots<i_{m-1}\leq n}g_{m}(x,X_{i_1},\ldots,X_{i_{m-1}})+\frac{1}{n^{m}}\sum_{1\leq i_1<\ldots <i_{m}\leq n}g_{m-1}(X_{i_1},\ldots,X_{i_{m}})\\
	=&\frac{1}{n}\sum_{i=1}^ng_1(X_i)\\
	&-\sum_{k=2}^m (m-k)\frac{1}{n^{k-1}}\left(\sum_{1\leq i_1<\ldots<i_{k-1}\leq n}g_k(x,X_{i_1},\ldots, X_{i_{k-1}})+\frac{1}{n}\sum_{1\leq i_1<\ldots< i_k \leq n}g_k(X_{i_1},\ldots,X_{i_k})\right)
	\end{align*}
	
	Using this representation we can split the expected value and handle the single terms separately.
	
	\begin{align*}
	&\mathbb{E}\Big \vert \sum_{r=-(n-1)}^{n-1}\kappa\left(\frac{\vert r \vert }{b_n}\right) \frac{1}{n}\sum_{j=1}^{n-\vert r \vert } \left(g_1(X_j)g_1(X_{j+\vert r \vert })-\hat{g}_1(X_j)\hat{g}_1(X_{j+\vert r \vert })\right)\Big \vert\\
	&\leq \mathbb{E}\Big \vert \sum_{r=-(n-1)}^{n-1}\kappa\left(\frac{\vert r \vert }{b_n}\right) \frac{1}{n}\sum_{j=1}^{n-\vert r \vert } \left( (g_1(X_j)-\hat{g}_1(X_j))g_1(X_{j+\vert r \vert })\right)\Big \vert\\
	& + \mathbb{E}\Big \vert \sum_{r=-(n-1)}^{n-1}\kappa\left(\frac{\vert r \vert }{b_n}\right) \frac{1}{n}\sum_{j=1}^{n-\vert r \vert } \left((g_1(X_{j+\vert r \vert })-\hat{g}_1(X_{j+\vert r \vert }))\hat{g}_1(X_j)\right)\Big \vert\\
	&\leq \mathbb{E}\Big \vert \sum_{r=-(n-1)}^{n-1}\kappa\left(\frac{\vert r \vert }{b_n}\right) \frac{1}{n}\sum_{j=1}^{n-\vert r \vert } \frac{1}{n}\sum_{i=1}^ng_1(X_i)g_1(X_{j+\vert r \vert })\Big \vert\\
	&+ \mathbb{E}\Big \vert \sum_{r=-(n-1)}^{n-1}\kappa\left(\frac{\vert r \vert }{b_n}\right) \frac{1}{n}\sum_{j=1}^{n-\vert r \vert } (m-2)\frac{1}{n}\sum_{i=1}^n g_2(X_i,X_j)g_1(X_{j+\vert r \vert })\Big \vert\\
	&+\mathbb{E}\Big \vert \sum_{r=-(n-1)}^{n-1}\kappa\left(\frac{\vert r \vert }{b_n}\right) \frac{1}{n}\sum_{j=1}^{n-\vert r \vert }(m-2)\frac{1}{n^2}\sum_{1\leq i_1 <i_2\leq n}g_2(X_{i_1},X_{i_2})g_1(X_{j+\vert r \vert })\Big\vert\\
	&+\ldots \\
	&+\mathbb{E}\Big \vert \sum_{r=-(n-1)}^{n-1}\kappa\left(\frac{\vert r \vert }{b_n}\right) \frac{1}{n}\sum_{j=1}^{n-\vert r \vert }\frac{1}{n^{m-1}}\sum_{1\leq i_1< \ldots < i_{m-1}\leq n}g_m(X_j,X_{i_1},\ldots,X_{i_{m-1}})g_1(X_{j+\vert r \vert})\Big \vert\\
	&+\mathbb{E}\Big \vert \sum_{r=-(n-1)}^{n-1}\kappa\left(\frac{\vert r \vert }{b_n}\right) \frac{1}{n}\sum_{j=1}^{n-\vert r \vert }\frac{1}{n^m}\sum_{1\leq i_1<\ldots < i_m\leq n}g_m(X_{i_1},\ldots,X_{i_m})g_1(X_{j+\vert r \vert })\Big\vert\\
	&+\mathbb{E}\Big \vert \sum_{r=-(n-1)}^{n-1}\kappa\left(\frac{\vert r \vert }{b_n}\right) \frac{1}{n}\sum_{j=1}^{n-\vert r \vert } \frac{1}{n}\sum_{i=1}^ng_1(X_i)\hat{g}_1(X_{j})\Big \vert\\
	&+\ldots\\
	&+\mathbb{E}\Big \vert \sum_{r=-(n-1)}^{n-1}\kappa\left(\frac{\vert r \vert }{b_n}\right) \frac{1}{n}\sum_{j=1}^{n-\vert r \vert }\frac{1}{n^m}\sum_{1\leq i_1<\ldots < i_m\leq n}g_m(X_{i_1},\ldots,X_{i_m})\hat{g}_1(X_{j})\Big\vert
	\end{align*}
	We denote the $4(m-1)+2$ terms with $I_{i}$, $i=1,\ldots, 4(m-1)+1$.
	
	The first term $I_1$  containing the first term of the Hoeffding decomposition can be handled analogously to \cite{Dehling.2015}, using the Lemma C.1 of them. The conditions of the Lemma are fulfilled, since the boundedness of the kernel $h$ replaces the Assumption 2.3. It remains to show, that our definition of NED implies the required assumptions on the P-NED process.
	Therefore, we want to use Lemma A.1 of \cite{Dehling.2015} saying that an $L_1$-near epoch dependent process on $(Z_n)_{n\in \mathbb{Z}}$ with approximating constants $(a_l)_{l\in\mathbb{N}}$ is P-NED on the same process $(Z_n)_{n\in\mathbb{Z}}$. If we then choose $s_k=Ck^{-6(1+\frac{2+\delta}{\delta})}$ and $\Phi(x)=x^{-1}$ we gain 
	\begin{align*}
	\Phi(\epsilon)s_k=\epsilon^{-1}Ck^{-6(1+\frac{2+\delta}{\delta})}\geq k^{-(\delta+3)}\epsilon^{-1}=a_k\epsilon^{-1}
	\end{align*}
	and hence we know that $(X_n)_{n\in\mathbb{N}}$ is P-NED with approximating constants $(s_k)_{k\in\mathbb{N}}$ and function $\Phi$ on an absolutely regular process $(Z_n)_{n\in\mathbb{Z}}$ with mixing coefficients $(\beta(l))_{l\in\mathbb{N}}$. For these coefficients holds $s_k\Phi(k^{-6})=O(k^{-6(2+\delta)/\delta})$ and $\sum_{k=1}^\infty k\beta_k^{\delta/(2+\delta)}<\infty$ and so all assumptions needed for Lemma C.1 are fulfilled.
	
	Let us now consider all the terms, which contain $g_1$ and $g_k(X_j,\ldots)$, $k=1,m$. These are the terms $I_{2k}$, $k=1,\ldots,m-1$.
	
	\begin{align*}
	I_{2k}&= \mathbb{E}\Big \vert \sum_{r=-(n-1)}^{n-1}\kappa\left(\frac{\vert r \vert }{b_n}\right) \frac{1}{n}\sum_{j=1}^{n-\vert r \vert }(m-(k+1))\frac{1}{n^k}\sum_{1\leq i_1<\ldots < i_k\leq n}g_{k+1}(X_j,X_{i_1},\ldots,X_{i_k})h_1(X_{j+\vert r \vert })\Big\vert\\
	&=\mathbb{E}\Big\vert \frac{1}{n}\sum_{j_1=1}^n \frac{m-(k+1)}{n^k} \sum_{1\leq i_1< \ldots < i_{k+1}\leq n} g_{k+1}(X_{i_1},\ldots,X_{i_{k+1}})g_1(X_{j_1})\kappa\left(\frac{\vert i_1-j_1 \vert }{b_n}\right)\Big\vert\\
	&\leq  \mathbb{E}\Bigg(\left(\frac{1}{n}\sum_{j_1=1}^n g_1(X_{j_1})^2\right)^{\frac{1}{2}} \\
	&\phantom{\leq \mathbb{E}}\left( \frac{1}{n}\sum_{j_1=1}^n \left(\frac{m-(k+1)}{n^k}\sum_{1\leq i_1 <\ldots <i_{k+1}\leq n} g_{k+1}(X_{i_1},\ldots,X_{i_{k+1}})\kappa\left(\frac{\vert i_1-j_1 \vert }{b_n}\right)\right)^2\right)^\frac{1}{2}\Bigg)\\
	&\leq \left(\mathbb{E}\left(\frac{1}{n}\sum_{j_1=1}^ng_1(X_{j_1})^2\right)\right)^\frac{1}{2}\\
	&\phantom{\leq\mathbb{E}}\left(\mathbb{E}\left(\frac{1}{n}\sum_{j_1=1}^n \left(\frac{m-(k+1)}{n^k}\sum_{1\leq i_1 <\ldots <i_{k+1}\leq n} g_{k+1}(X_{i_1},\ldots,X_{i_{k+1}})\kappa\left(\frac{\vert i_1-j_1 \vert }{b_n}\right)\right)^2\right)\right)^\frac{1}{2},\\
	\end{align*}
	where we used the Hölder-inequality in the last step. Now we can use the boundedness of $h_1$ (since $h$ is bounded) and get
	\begin{align}
	I_{2k}&\leq \left(\mathbb{E}\left(\frac{1}{n}\sum_{j_1=1}^ng_1(X_{j_1})^2\right)\right)^\frac{1}{2} \nonumber\\
	&\phantom{\leq\mathbb{E}}\left(\mathbb{E}\left( \frac{(m-(k+1))^2}{n^{2k+1}}\left(\sum_{1\leq i_1 <\ldots <i_{k+1}\leq n} g_{k+1}(X_{i_1},\ldots,X_{i_{k+1}})\sum_{j_1=1}^n\kappa\left(\frac{\vert i_1-j_1 \vert }{b_n}\right)\right)^2\right)\right)^\frac{1}{2}\nonumber\\
	&\leq C \frac{m-(k+1)}{n^{k+\frac{1}{2}}} \left(\mathbb{E}\left(\sum_{1\leq i_1 <\ldots <i_{k+1}\leq n} g_{k+1}(X_{i_1},\ldots,X_{i_{k+1}})\sum_{j_1=1}^n\kappa\left(\frac{\vert i_1-j_1 \vert }{b_n}\right)\right)^2\right)^\frac{1}{2}\nonumber\\
	&\leq C^\prime \frac{1}{n^{k+\frac{1}{2}}}\Bigg(\mathbb{E}\Bigg(\sum_{i_1,\ldots,i_{2(k+1)}=1}^n g_{k+1}(X_{i_1},\ldots,X_{i_{k+1}})g_{k+1}(X_{i_{k+2}},\ldots,X_{i_{2(k+1)}}) \label{I2k}\\
	&\phantom{\leq C^\prime \frac{1}{n^{k+\frac{1}{2}}}(\mathbb{E}(\sum_{i_1,\ldots,i_{2(k+1)}=1}^n} \sum_{j_1=1}^n\kappa\left(\frac{\vert i_1-j_1 \vert }{b_n}\right)\kappa\left(\frac{\vert i_{k+2}-j_1 \vert }{b_n}\right)\Bigg)\Bigg)^\frac{1}{2}\nonumber.
	\end{align}
	To show the convergence of $I_{2k}$ to zero we finally want to apply Lemma \ref{finalcov}. For this we have to show that 
	\begin{align*}
	\sum_{l=0}^nl(\beta(l)+A_l)=O(n^\gamma)
	\end{align*}
	with $A_l=\sqrt{2\sum_{i=l}^\infty a_i}$.
	From the assumption $\sum_{l=1}^\infty l \beta^{\delta/(2+\delta)}(l)<\infty$, which implies $l\beta^{\delta/(2+\delta)}(l)\rightarrow 0$ for $l\rightarrow \infty$, and the fact that the mixing coefficients $(\beta(l))_{l\in\mathbb{N}}$ are non-negative and monotone decreasing (therefore $\beta^{\delta/(2+\delta)}(l)$ is also monotone decreasing) we know, that $l\beta^{\delta/(2+\delta)}(l)$ is monotone decreasing and positive. Hence 
	\begin{align*}
	l\beta^{\delta/(2+\delta)(l)}=O\left(\frac{1}{l}\right)
	\end{align*}
	and so $\beta(l)=O\left(l^{-\eta}\right)$ for a $\eta>2$.
	Analogously to the proof of Theorem \ref{asynom}, but now for $a_l=O\left(l^{-\delta-3}\right)$, we then can show
	\begin{align*}
	\sum_{l=0}^n l(\beta(l)+A_l)\leq C\sum_{l=1}^n l^{-\eta+1}+O(n^{2-\frac{\delta+2}{2}}),
	\end{align*}
	where $2-\frac{\delta+2}{2}<\frac{1}{2}$ since $\delta>11$. Let us now have a closer look at the first term. We want to show that $C\sum_{l=1}^n l^{-\eta+1}=O(n^\gamma)$ for a $0<\gamma<1/2$. It is
	\begin{align*}
	\frac{\sum_{l=1}^nl^{-\eta+1}}{n^\gamma}\leq \inf\limits_{1\leq t \leq n}t^{-\gamma}\sum_{l=1}^nl^{1-\eta}\leq \sum_{l=1}^nl^{-1-\eta}l^{-\gamma}=\sum_{l=1}^n\left(\frac{1}{l}\right)^{-1+\eta+\gamma}.
	\end{align*}
	This is the Dirichlet series and it converges for $-1+\eta+\gamma>1$. Since $\eta>2$ and $0<\gamma<1/2$ we have
	$
	\sum_{l=1}^nl^{-\eta+1}=O(n^\gamma)
	$
	and therefore 
	\begin{align*}
	\sum_{l=0}^nl(\beta(l)+A_l)=O(n^\gamma)
	\end{align*}
	for a $0<\gamma<1/2$.
	
	Now we can apply Lemma \ref{finalcov} to (\ref{I2k}), where $$c_{i_1,i_{k+2}}=\sum_{j_1=1}^n\kappa\left(\frac{\vert i_1-j_1\vert}{b_n}\right)\kappa\left(\frac{\vert i_{k+2}-j_1\vert}{b_n}\right)=O(b_n)$$  and obtain
	\begin{align*}
	I_{2k}\leq C \frac{1}{n^{k+\frac{1}{2}}}\left(n^{2(k+1)-2+\gamma}b_n\right)^\frac{1}{2}\leq C\left(\frac{n^{2k+\gamma}}{n^{2k+1}}b_n\right)^\frac{1}{2}\\
	=C \left(n^{\gamma-\frac{1}{2}}\right)^\frac{1}{2}\left(\frac{b_n}{\sqrt{n}}\right)^\frac{1}{2}\longrightarrow 0,\\
	\end{align*}
	because of Assumption \ref{asskern} and $0<\gamma<1/2$.
	
	Therefore, $I_{2k}$ converges to zero for all $k=1,\ldots,m-1$. 
	
	\vspace{0.3cm}
	
	It remains to show the convergence of the remaining terms. The terms containing $g_1(\cdot)$ and $g_k(X_{i_1},\ldots,X_{i_k})$, $k=2,\ldots,m$ are denoted with $I_{2k+1}$, $k=1,\ldots,m-1$.
	
	\begin{align*}
	I_{2k+1}&=\mathbb{E}\Bigg\vert\sum_{r=-(n-1)}^{n-1}\kappa\left(\frac{\vert r \vert }{b_n}\right) \frac{1}{n}\sum_{j=1}^{n-\vert r \vert}\frac{m-(k+1)}{n^{k+1}}\sum_{1\leq i_1<\ldots<i_{k+1}\leq n}g_{k+1}(X_{i_1},\ldots,X_{i_{k+1}})g_1(X_{j+\vert r \vert })\Bigg\vert\\
	&\leq \Vert \frac{m-(k+1)}{n^{k+1}}\sum_{1\leq i_1<\ldots<i_{k+1}\leq n}g_{k+1}(X_{i_1},\ldots,X_{i_{k+1}})\Vert_2\\
	&\phantom{\leq \Vert} \Vert \frac{1}{n} \sum_{j=1}^{n-\vert r \vert}g_1(X_{j+\vert r \vert})\sum_{j_1=1}^n \kappa\left(\frac{\vert j-j_1 \vert }{b_n}\right)\Vert_2\\
	&\leq C\frac{1}{n^{k+1}}\left(n^{2(k+1)-\gamma}\right)^\frac{1}{2} \frac{b_n}{\sqrt{n}}\\
	&=C\frac{n^{k+\gamma/2}}{n^{k+1}}o(1)\longrightarrow 0,	
	\end{align*}
	where we used the H\"older inequality, the boundedness of $h_{k+1}$, Lemma \ref{finalcov} and Assumption \ref{asskern} as before.
	
	The convergence of the remaining terms $I_{2m}$, $I_{2k}$ and $I_{2k+1}$ for $k=m+1,\ldots,2(m-1)$ can be shown analogously and is therefore omitted.
\end{proof}

\begin{proof}[Proof of Theorem \ref{main}]
		$\newline$
	Analogously to the three conditions of \cite{Serf1984}, which were also used in \cite{Fischer.2016}, we want to show
	
	\begin{enumerate}[(i)]
		\item 
		For $W_{H_n,H_F}(y)=\left(\frac{\int_0^{H_n(y)}{J(t)dt}-\int_0^{H_F(y)}{J(t)dt}}{H_n(y)-H_F(y)}-J(H_F(y))\right)$ holds

		$\lVert W_{H_n,H_F} \rVert_{L_1}=o_p(1)$ and it is $\lVert H_n-H_F \rVert_{\infty}=O_p(n^{-\frac{1}{2}})$.
		\item  For the remainder term $R_{p_i,n}=\hat{\xi}_{p_i,n}-\xi_{p_i}+\frac{p_i-H_n(\xi_{p_i})}{h_f(\xi_{p_i})}$ of the Bahadur representation of an empirical quantile holds 
		\begin{align*}
		R_{p_i,n}=o_p(n^{-\frac{1}{2}}).
		\end{align*}
		\item  For a $U$-statistic with kernel 
		\begin{align*}
		A(x_1,\ldots,x_m)=&-\int_{-\infty}^{\infty}{\left(1_{\left[h(x_1,\ldots,x_m)\leq y\right]}-H_F(y)\right)J(H_F(y))dy}\\
		&+\sum_{i=1}^{d}{a_i\frac{p_i-1_{\left[h(x_1,\ldots,x_m)\leq H_F^{-1}(p_i)\right]}}{h_F(H_F^{-1}(p_i))}}
		\end{align*}
		we have
		\begin{align*}
		\sqrt{n}(U_n(A)-\theta)\stackrel{D}{\longrightarrow}N(0,\sigma^2).
		\end{align*}
	\end{enumerate} 
	
	As in the case of strong mixing condition (i) is fulfilled using Lemma 8.2.4.A of \cite{serf1980} and Corollary \ref{glican}. Condition (ii) can be proven by Lemma \ref{bahad}.

	\vspace*{0.15cm}
	
	It remains to show that condition (iii) is satisfied. 
	
	Therefore we apply Theorem \ref{asynom}. \cite{Fischer.2016} already showed that the kernel $A$ satisfies the assumptions in the theorem.
	
\end{proof}

\begin{proof}[Proof of Corollary \ref{kern}]
		$\newline$
	As we have mentioned before, the error term $T(H_n)-T(H_F)$ can be approximated by a $U$-statistic with kernel $A$. This leads to the special structure of $\sigma^2_{GL}$, being similar to that of a long-run variance of $U$-statistics. Therefore, we want to apply Theorem \ref{kernest} to prove this Corollary.
	It only remains to show the assumptions on the kernel $A$. As we have said above and was proved in \cite{Fischer.2016} the kernel $A$ is bounded and satisfies the extended variation condition. We therefore only have to show that $A$ also satisfies the $L_2$-variation condition.
	
	It is
	\begin{align*}
	&\sqrt{\mathbb{E}\left(\sup\limits_{\Vert(x_1,\ldots,x_m)-(X_1^\prime,\ldots,X_m^\prime)\Vert\leq \epsilon}\vert A(x_1,\ldots,x_m)-A(X_1^\prime,\ldots,X_m^\prime)\vert\right)^2}\\
	&=\Bigg(\mathbb{E}\Big(\sup\limits_{\Vert(x_1,\ldots,x_m)-(X_1^\prime,\ldots,X_m^\prime)\vert\leq \epsilon}\Bigg\vert-\int_{-\infty}^\infty(1_{[h(x_1,\ldots,x_m)\leq y]}-H_F(y))J(H_F(y))dy\\
	&\phantom{\mathbb{E}\sup\limits_{\Vert(x_1,\ldots,x_m)\vert\leq \epsilon}}+\int_{-\infty}^\infty(1_{[h(X^\prime_1,\ldots,X^\prime_m)\leq y]}-H_F(y))J(H_F(y))dy\\
	&\phantom{\mathbb{E}\sup\limits_{\Vert(x_1,\ldots,x_m)\vert\leq \epsilon}}+\sum_{i=1}^da_i\frac{p_i-1_{[h(x_1,\ldots,x_m)\leq H_F^{-1}(p_i)]}}{h_F(H_F^{-1}(p_i))}-\sum_{i=1}^da_i\frac{p_i-1_{[h(X^\prime_1,\ldots,X^\prime_m)\leq H_F^{-1}(p_i)]}}{h_F(H_F^{-1}(p_i))}\Bigg\vert\Big)^2\Bigg)^\frac{1}{2}\\
	&\leq \sqrt{\mathbb{E}\left(\sup\limits_{\Vert(x_1,\ldots,x_m)-(X_1^\prime,\ldots,X_m^\prime)\Vert\leq \epsilon}\Bigg\vert\int_{-\infty}^\infty(1_{[h(x_1,\ldots,x_m)\leq y]}-1_{[h(X^\prime_1,\ldots,X^\prime_m)\leq y]})J(H_F(y))dy\Bigg\vert\right)^2}\\
	&+ \sqrt{\mathbb{E}\left(\sup\limits_{\Vert(x_1,\ldots,x_m)-(X_1^\prime,\ldots,X_m^\prime)\Vert\leq \epsilon}\Bigg\vert\sum_{i=1}^da_i\frac{1_{[h(x_1,\ldots,x_m)\leq H_F^{-1}(p_i)]}-1_{[h(X^\prime_1,\ldots,X^\prime_m)\leq H_F^{-1}(p_i)]}}{h_F(H_F^{-1}(p_i))}\right)^2}
	\end{align*}
	These term can now be treated separately and analogous to the proof of the extended variation condition in \cite{Fischer.2016}. For the first term we gain
	\begin{align*}
	&\mathbb{E}\left(\sup\limits_{\Vert(x_1,\ldots,x_m)-(X_1^\prime,\ldots,X_m^\prime)\Vert\leq \epsilon}\Bigg\vert\int_{-\infty}^\infty(1_{[h(x_1,\ldots,x_m)\leq y]}-1_{[h(X^\prime_1,\ldots,X^\prime_m)\leq y]})J(H_F(y))dy\Bigg\vert\right)^2\\
	&\leq \mathbb{E}\left(\sup\limits_{\Vert(x_1,\ldots,x_m)-(X_1^\prime,\ldots,X_m^\prime)\Vert\leq \epsilon}\vert 1_{[h(X_1^\prime,\ldots,X_m^\prime)\in(t-\tilde{L}\epsilon,t+\tilde{L}\epsilon)]}\vert \Bigg\vert\int_{-\infty}^\infty J(H_F(y))dy\Bigg\vert\right)^2\\
    &\leq \mathbb{E}\left(\sup\limits_{t\in\mathbb{R}}\vert 1_{[h(X_1^\prime,\ldots,X_m^\prime)\in(t-\tilde{L}\epsilon,t+\tilde{L}\epsilon)]}\vert C\right)^2\\
    &\leq C\sup\limits_{t\in\mathbb{R}}\vert \mathbb{E}(1_{[h(X_1^\prime,\ldots,X_m^\prime)\in(t-\tilde{L}\epsilon,t+\tilde{L}\epsilon)]})^2\vert\\
    &\leq C\sup\limits_{t\in\mathbb{R}}\vert \mathbb{P}(h(X_1^\prime,\ldots,X_m^\prime)\in(t-\tilde{L}\epsilon,t+\tilde{L}\epsilon))\vert\\
    &\leq C(\sup\limits_{x\in\mathbb{R}}h_F(x))2\tilde{L}\epsilon\leq L\epsilon
	\end{align*}
	using the boundedness of $\vert \int_{-\infty}^\infty J(H_F(y))dy\vert$ and $h_F$.
	
	Therefore, 
	\begin{align*}
	\sqrt{\mathbb{E}\left(\sup\limits_{\Vert(x_1,\ldots,x_m)-(X_1^\prime,\ldots,X_m^\prime)\Vert\leq \epsilon}\vert A(x_1,\ldots,x_m)-A(X_1^\prime,\ldots,X_m^\prime)\vert\right)^2}\leq 2\sqrt{L\epsilon}
	\end{align*}
	and
		\begin{align*}
		\mathbb{E}\left(\sup\limits_{\Vert(x_1,\ldots,x_m)-(X_1^\prime,\ldots,X_m^\prime)\Vert\leq \epsilon}\vert A(x_1,\ldots,x_m)-A(X_1^\prime,\ldots,X_m^\prime)\vert\right)^2\leq L^\prime\epsilon
		\end{align*}
	This makes Theorem \ref{kernest} applicable and the proof is completed.
\end{proof}

\section{Application: Scale estimators under EGARCH-processes}
When developing theory for near epoch dependent data one of the widely used examples is the GARCH(p,q)-process (Generalized Autoregressive Conditional Heteroscedasticity) (\cite{Bollerslev.1986}), a generalisation of ARCH-processes. A process $(X_t)_{t\in\mathbb{Z}}$ is called GARCH(p,q)-process, if
\begin{align*}
X_t&=\sigma_tZ_t,
\end{align*}
with positive $\sigma^2$ given by
\begin{align*}
\sigma_t^2&=\alpha_0+\alpha_1X_{t-1}^2+\ldots+\alpha_pX_{t-p}^2+\beta_1\sigma_{t-1}^2+\ldots+\beta_q\sigma_{t-q}^2,
\end{align*}
where $\alpha_0,\ldots,\alpha_p,\beta_1,\ldots,\beta_q \in\mathbb{R}$ are non-negative with $\alpha_p\neq0$ and $\beta_q\neq 0$ and $(Z_t)_{t\in\mathbb{Z}}$ is an i.i.d. sequence with mean zero and variance equal to one.

\vspace{0.3cm}

\cite{Hansen.1991} relax the assumptions on $(Z_t)_{t\in\mathbb{Z}}$, such that $(Z_t)_{t\in\mathbb{Z}}$ can be assumed to be $\alpha$-mixing.
They showed then that if $\left(\mathbb{E}[(\beta_1+\alpha_1(\frac{X_t}{\sigma_t})^2)^r\vert \mathcal{F}_{t-1}]\right)^{1/5}\leq c <1 \text{ a.s. for all } t$, a GARCH(1,1)-process $X_t$ is $L_r$-NED on the $\alpha$-mixing process $Z_t\sigma_t$ with approximation constants $a_l=c^l2\alpha_0c/(1-c)$.

A generalisation which is widely used in financial applications and also in hydrology is the Exponential GARCH (EGARCH) model proposed by \cite{Nelson.1991}. One of the advantages of EGARCH-processes is that they do not have the non-negativity restriction of the GARCH-processes.
	
	The process $(X_t)_{t\in\mathbb{Z}}$ is called EGARCH(p,q)-process on the sequence $(Z_t)_{t\in\mathbb{Z}}$ , if
	\begin{align*}
	X_t&=\sigma_tZ_t
	\end{align*}
	where $\sigma_t^2$ is the positive conditional variance given by
	\begin{align*}
	\log(\sigma_t^2)&=\alpha_0+\alpha_1f(Z_{t-1})+\ldots+\alpha_pf(Z_{t-p})+\beta_1\log(\sigma_{t-1}^2)+\ldots+\beta_q\log(\sigma_{t-q}^2),
	\end{align*}
	where $\alpha_0,\ldots,\alpha_p,\beta_1,\ldots,\beta_q \in\mathbb{R}$ with $\alpha_p\neq0$ and $\beta_q\neq 0$ and $f$ is a measurable function which is linear in $Z$ with coefficients $\theta$ and $\lambda$ given by
	\begin{align*}
	f(Z_t)=\theta Z_t+\lambda(\vert Z_t\vert-\mathbb{E}\vert Z_t\vert).
	\end{align*}

In this section we want to show for some scale estimators proposed in Section 1 their asymptotic normality if we consider an underlying EGARCH(1,1)-process. Therefore, we first have to show under which conditions EGARCH(p,q)-processes are NED.

\begin{theo}\label{EGARCH}
	Let $\sigma_1$ be bounded and $\vert\sum_{i=1}^q\beta_i \vert<1$. Moreover, assume that
	\begin{align}\label{NEDEGARCH}
	\sup\limits_{t \in\mathbb{Z}}\vert Z_t\vert <\infty.
	\end{align}
	Then the EGARCH(p,q)-process on the sequence $(Z_t)_{t\in\mathbb{Z}}$ given by $X_t=\sigma_tZ_t$ is near epoch dependent.
\end{theo}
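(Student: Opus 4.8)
The plan is to verify the $L_1$-NED property directly from the defining recursion by exhibiting an explicit approximant of $X_1$ that is measurable with respect to $\mathcal{G}_{-l}^l=\sigma(Z_{-l},\ldots,Z_l)$ and by controlling the approximation error in $L_1$. Since the conditional expectation is the best $L_1$-predictor up to a factor two, i.e.\ $\mathbb{E}\lvert X_1-\mathbb{E}(X_1\mid\mathcal{G}_{-l}^l)\rvert\le 2\,\mathbb{E}\lvert X_1-Y\rvert$ for every $\mathcal{G}_{-l}^l$-measurable $Y\in L_1$, it suffices to produce one good approximant $Y=X_1^{(l)}$. Writing $X_1=\sigma_1 Z_1$ and $X_1^{(l)}=\sigma_{1,l}Z_1$ with $\sigma_{1,l}$ a truncated version of $\sigma_1$, the elementary bound $\lvert X_1-X_1^{(l)}\rvert=\lvert Z_1\rvert\,\lvert\sigma_1-\sigma_{1,l}\rvert\le\big(\sup_t\lvert Z_t\rvert\big)\lvert\sigma_1-\sigma_{1,l}\rvert$ reduces everything to approximating the volatility $\sigma_1$, and, because $\sigma_1=\exp(\tfrac12\log\sigma_1^2)$, to approximating $\log\sigma_1^2$. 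For $l\ge 1$ the quantity $X_1^{(l)}=\sigma_{1,l}Z_1$ is $\mathcal{G}_{-l}^l$-measurable provided $\sigma_{1,l}$ uses only $Z_0,\ldots,Z_{1-l}$, while for $l=0$ a crude constant bound on $a_0$ suffices.

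The core step is to solve the recursion for $h_t:=\log\sigma_t^2$,
\[
h_t=\alpha_0+\sum_{i=1}^p\alpha_i f(Z_{t-i})+\sum_{j=1}^q\beta_j h_{t-j},
\]
as a causal moving average of the bounded inputs $f(Z_{t-i})$. The condition $\lvert\sum_{i=1}^q\beta_i\rvert<1$ renders the autoregressive part stable; in the EGARCH(1,1) case $q=1$ this is simply $\lvert\beta_1\rvert<1$, and one obtains explicitly $h_t=\mu+\alpha_1\sum_{k\ge0}\beta_1^k f(Z_{t-1-k})$ with $\mu=\alpha_0/(1-\beta_1)$. Because $\sup_t\lvert Z_t\rvert<\infty$, the map $f(z)=\theta z+\lambda(\lvert z\rvert-\mathbb{E}\lvert Z_1\rvert)$ is uniformly bounded along the trajectory, so the series converges absolutely and $h_1$ is bounded. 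I then define $h_{1,l}$ by retaining only the terms supported on $Z_0,\ldots,Z_{1-l}$, so that $h_{1,l}$ is $\mathcal{G}_{-l}^l$-measurable and, since the coefficients decay geometrically, $\lvert h_1-h_{1,l}\rvert\le C\sum_{k>l}\rho^k=C'\rho^l$ for some $\rho\in(0,1)$.

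Finally, since both $h_1$ and its truncations $h_{1,l}$ lie in a common bounded interval (partial sums of an absolutely convergent, uniformly bounded series), the map $x\mapsto\exp(x/2)$ is Lipschitz there, giving $\lvert\sigma_1-\sigma_{1,l}\rvert\le C\lvert h_1-h_{1,l}\rvert\le C'\rho^l$. Combining this with the first reduction yields $\mathbb{E}\lvert X_1-X_1^{(l)}\rvert\le\big(\sup_t\lvert Z_t\rvert\big)C'\rho^l$, so that $(X_t)$ is $L_1$-NED on $(Z_t)$ with geometrically decaying approximation constants $a_l=C''\rho^l\to 0$, which also comfortably meets the summability rates required by the limit theorems of Section~2.

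The step I expect to be the main obstacle is the second one for general order $(p,q)$: turning the ARMA-type recursion for $\log\sigma_t^2$ into a causal representation with geometrically decaying coefficients and, crucially, verifying that a truncation of the driving sequence propagates through the recursion without amplification. For $q=1$ this is the elementary geometric series above, but for $q>1$ it requires passing to the companion-matrix (vector AR(1)) form and checking that the homogeneous recursion is contractive, which is exactly where the assumption $\lvert\sum_i\beta_i\rvert<1$ must be invoked and handled carefully; the transfer from $h_t$ to $\sigma_t$ and then to $X_t$ is, by contrast, routine once $\sigma_1$ is known to be bounded.
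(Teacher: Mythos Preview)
Your proposal is correct in its overall architecture and follows the same two–step strategy as the paper: (i) obtain a causal linear (MA($\infty$)) representation of $\log\sigma_t^2$ in the bounded inputs $f(Z_{t-k})$ with summable coefficients, and (ii) transfer the approximation through the locally Lipschitz map $x\mapsto\exp(x/2)$, using the boundedness of $\sigma_1$ to stay on a bounded interval. For step (ii) the paper invokes Proposition~2.11 of \cite{Boro} on Lipschitz images of NED processes, whereas you construct an explicit $\mathcal G_{-l}^l$-measurable truncation and bound the $L_1$-error directly; both routes are equivalent, and yours is slightly more self-contained.

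The one substantive divergence is in step (i) for $q>1$. The paper iterates the recursion in all $q$ lags simultaneously, producing a remainder of the form
\[
\sum_{\substack{k_1,\dots,k_q\ge 0\\ k_1+\dots+k_q=n}}\binom{n}{k_1,\dots,k_q}\beta_1^{k_1}\cdots\beta_q^{k_q}\,\log\sigma^2_{t-\sum_i i k_i},
\]
and then applies the multinomial theorem so that the hypothesis $\lvert\sum_{i=1}^q\beta_i\rvert<1$ enters \emph{directly} to drive this remainder to zero. Your proposed companion-matrix (vector AR(1)) route is the textbook approach, but it yields a contractive homogeneous part only when the spectral radius of the companion matrix is below one, i.e.\ when all roots of $1-\beta_1 z-\dots-\beta_q z^q$ lie outside the unit disk. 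This is \emph{not} implied by $\lvert\sum_i\beta_i\rvert<1$ alone (for instance $q=2$, $\beta_1=2$, $\beta_2=-1.5$ gives $\lvert\beta_1+\beta_2\rvert=0.5$ but characteristic roots $1\pm i/\sqrt{2}$ of modulus $\sqrt{3/2}>1$). So the step you flag as the main obstacle is genuinely one: with the companion-matrix argument you cannot stay within the stated hypothesis, whereas the paper's multinomial expansion is tailored precisely to make $\lvert\sum_i\beta_i\rvert<1$ the operative condition.
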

\begin{Rem}
	\begin{enumerate}
		\item The assumption (\ref{NEDEGARCH}) in Theorem \ref{EGARCH} is an analogue to the condition of \cite{Hansen.1991} for GARCH-processes to the EGARCH-case with arbitrary values $p$ and $q$.
		
		Whether this condition is fulfilled depends on the existing moments of $Z_i$. For example, if $(Z_t)_{t\in \mathbb{Z}}$ is a White Noise process with variance $\sigma$ (that is $\mathbb{E}\vert Z_t\vert\leq\sigma=1$), the condition is fulfilled.
		\item The boundedness of the conditional variance $\sigma_1$ is a common assumption for GARCH-processes (see \cite{Hansen.1991}, \cite{Lee.1994}). It results from the moment condition on $\sigma$, $\mathbb{E}\vert \sigma_1\vert^{1+\delta}<\infty$, which is needed in the following proof and the Lipschitz-condition.
	\end{enumerate}
\end{Rem}

\begin{proof}(Theorem \ref{EGARCH})
	
	Using an iterative expression of the term $\log(\sigma_t^2)$ we obtain
	\begin{align*}
	\log(\sigma_{t}^{2})=
	\sum_{j=1}^{n}\sum_{\substack{k_{1},\ldots,k_{q}\in \mathbb{N}_0,\\ k_{1}+\ldots+k_{q}=j-1}}\binom{j-1}{k_{1},\ldots,k_{q}}\beta_{1}^{k_{1}}\cdot\ldots\cdot\beta_{q}^{k_{q}} \left(\alpha_0+\sum^{p}_{k=1}\alpha_{k}f\left(Z_{t-k-\left(\sum_{i=1}^{q}i k_{i}\right)}\right)\right)\\
	+\sum_{\substack{k_{1},\ldots,k_{q}\in \mathbb{N}_{0},\\ k_{1}+\ldots+k_{q}=n}} \binom{n}{k_{1},\ldots,k_{q}} \beta_{1}^{k_{1}}\cdot\ldots\cdot\beta_{q}^{k_{q}}\log\left(\sigma_{t-\left(\sum_{i=1}^{q}i k_{i}\right)}^{2}\right).
	\end{align*}
	Now, considering the limit for $n\rightarrow \infty$, it is	
	\begin{align*}
	&\log(\sigma_{t}^{2})\\
	=& \lim_{n\rightarrow \infty} 
	\sum_{j=1}^{n}\sum_{\substack{k_{1},\ldots,k_{q}\in \mathbb{N}_{0},\\ k_{1}+...+k_{q}=j-1}}\binom{j-1}{k_{1},\ldots,k_{q}}\beta_{1}^{k_{1}}\cdot\ldots\cdot\beta_{q}^{k_{q}} \left(\alpha_0+\sum^{p}_{k=1}\alpha_{k}f\left(Z_{t-k-\left(\sum_{i=1}^{q}i k_{i}\right)}\right)\right) \\
	& +
	\lim_{n\rightarrow \infty} 
	\sum_{\substack{k_{1},\ldots,k_{q}\in \mathbb{N}_{0},\\ k_{1}+\ldots+k_{q}=n}} \binom{n}{k_{1},...,k_{q}} \beta_{1}^{k_{1}}\cdot\ldots\cdot\beta_{q}^{k_{q}}\log\left(\sigma_{t-\left(\sum_{i=1}^{q}i k_{i}\right)}^{2}\right)\\
	\end{align*}
	We want to show that the first term of the sum converges a.s. This is gained by the assumptions
	
	$\sup\limits_{t}\mathbb{E}\vert Z_t \vert<\infty$ and $\sum_{i=1}^q \beta_i\vert <1$ and the linearity of the function $f$. With the Multinomial Theorem and the convergence of the geometric series we can apply the monotone convergence theorem to obtain the  convergence of the series  (see for example Proposition 3.1.1 of \cite{Brockwell.2006}).
	
	For the second term we show that it converges to zero a.s., that is
	\begin{align*}
	\lim_{n\rightarrow \infty} 
	\sum_{\substack{k_{1},\ldots,k_{q}\in \mathbb{N}_{0},\\ k_{1}+\ldots+k_{q}=n}} \binom{n}{k_{1},\ldots,k_{q}} \beta_{1}^{k_{1}}\cdot\ldots\cdot\beta_{q}^{k_{q}}\log\left(\sigma_{t-\left(\sum_{i=1}^{q}i k_{i}\right)}^{2}\right)=0 \ ~ \text{for all } t \in \mathbb{Z} \text{  a.s.}
	\end{align*}
	By using the Multinomial Theorem we have
	\begin{align*}
	& \lim_{n\rightarrow \infty} 
	\sum_{\substack{k_{1},\ldots,k_{q}\in \mathbb{N}_{0},\\ k_{1}+\ldots+k_{q}=n}} \binom{n}{k_{1},\ldots,k_{q}} \beta_{1}^{k_{1}}\cdot\ldots\cdot\beta_{q}^{k_{q}}\log\left(\sigma_{t-\left(\sum_{i=1}^{q}i k_{i}\right)}^{2}\right)\\
	&\leq
	\sup_{k_{1},\ldots,k_{q}}\log\left(\sigma_{t-\left(\sum_{i=1}^{q}i k_{i}\right)}^{2}\right) \lim_{n\rightarrow \infty} 
	\sum_{\substack{k_{1},\ldots,k_{q}\in \mathbb{N}_{0},\\ k_{1}+\ldots+k_{q}=n}} \binom{n}{k_{1},\ldots,k_{q}} \beta_{1}^{k_{1}}\cdot\ldots\cdot\beta_{q}^{k_{q}}\\
	&=
	\sup_{k_{1},...,k_{q}}\log\left(\sigma_{t-\left(\sum_{i=1}^{q}i k_{i}\right)}^{2}\right) \lim_{n\rightarrow \infty} 
	(\beta_{1}+...+\beta_{q})^n
	\end{align*}
	and therefore the term converges a.s. to zero if 
	\begin{align*}
	\left|\sum_{i=1}^{q}\beta_{i}\right|<1.
	\end{align*}
	Hence, we can write
	\begin{align*}
	\log(\sigma_{t}^{2})=\sum_{j=1}^{\infty}\sum_{\substack{k_{1},...,k_{q}\in \mathbb{N}_{0},\\ k_{1}+...+k_{q}=j-1}}\binom{j-1}{k_{1},...,k_{q}}\beta_{1}^{k_{1}}\cdot...\cdot\beta_{q}^{k_{q}} \left(\alpha_0+\sum^{p}_{k=1}\alpha_{k}f\left(Z_{t-k-\left(\sum_{i=1}^{q}i k_{i}\right)}\right)\right).
	\end{align*}
	This is a linear solution and for this reason the process $(\log(\sigma_t^2))_t$ is near epoch dependent. Moreover, 
	\begin{align*}
	\sigma_t=\sqrt{\exp(\log(\sigma_t^2))}=g(\log(\sigma_t^2))
	\end{align*}
	with $g(x)=\sqrt{\exp(x)}$. This function $g$ fulfils the Lipschitz-condition for all $x\in (-\infty,a]$, $a\in \mathbb{R}$.
	We can now apply Proposition 2.11 of \cite{Boro}, where we need that $\sigma_1$ is bounded. Therefore, the process $\sigma_t$ and hence $X_t=\sigma_tZ_t$ is near epoch dependent on the process $(Z_t)_{t\in \mathbb{Z}}$.
\end{proof}
	
For the following simulations we consider an EGARCH(1,1)-process with parameters $\alpha=0.2$ and $\beta=0.05$. The coefficients of the function $f$ are chosen as $\theta=0.9$ and $\lambda=0.1$. These are common choices when simulating from an EGARCH-process. What is special and corresponds to the case of a NED-process on an underlying absolutely regular process is the choice of $Z_t$ as AR(1)-process with correlation coefficient $\rho=0.8$. Notice that the assumptions of Theorem \ref{NEDEGARCH} are therefore fulfilled. We compare the three estimators of Section 1 with different sample length concerning their asymptotic normality using QQ-Plots. For the estimators in Example 1.2 and 1.3 we use the special cases $Q$ and $LMS_n$ of the estimators.	

	\begin{figure}
		\caption{Gini's Mean difference for $n=100$ (left) and $n=1000$ (right)}
		\includegraphics[width=0.475\textwidth, height=0.4\textheight]{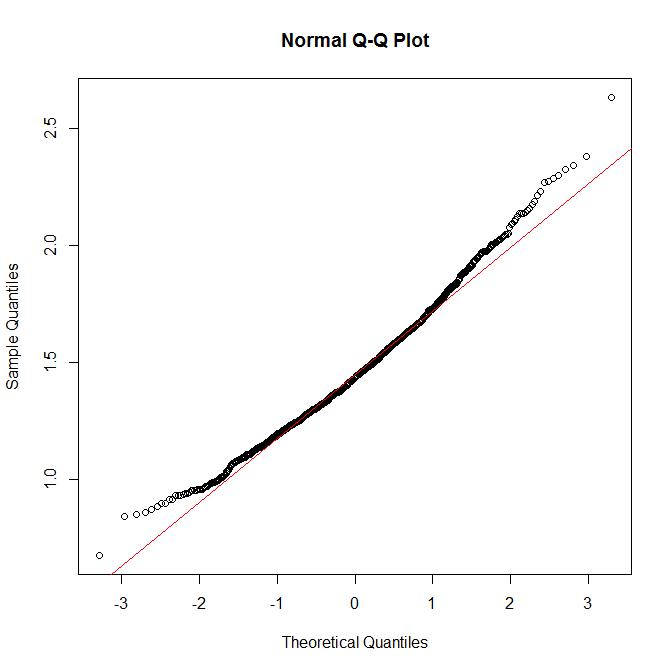}
		\hfill
		\includegraphics[width=0.475\textwidth, height=0.4\textheight]{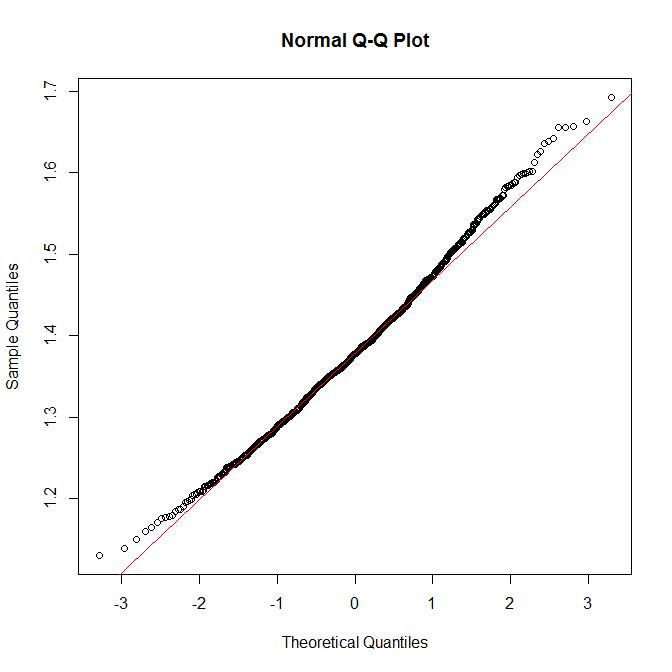}
	\end{figure}

	\begin{figure}
		\caption{$LMS_n$-estimator for $n=100$ (left) and $n=1000$ (right)}
		\includegraphics[width=0.475\textwidth, height=0.4\textheight]{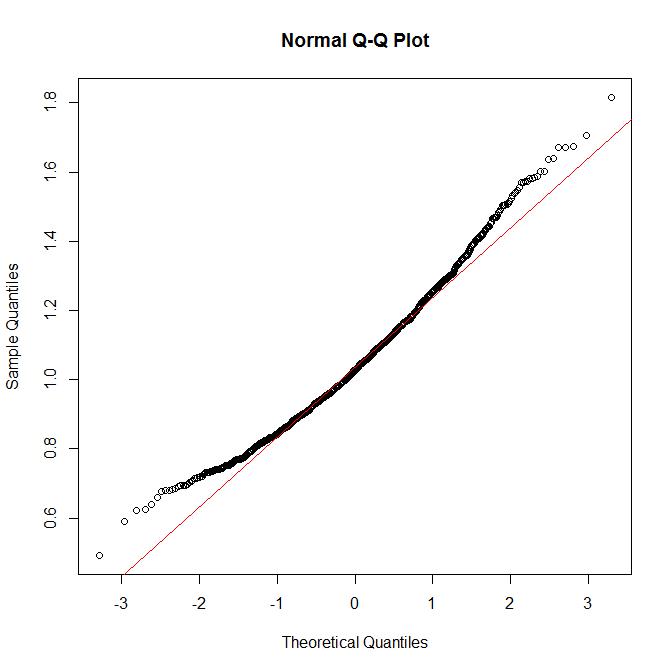}
		\hfill
		\includegraphics[width=0.475\textwidth, height=0.4\textheight]{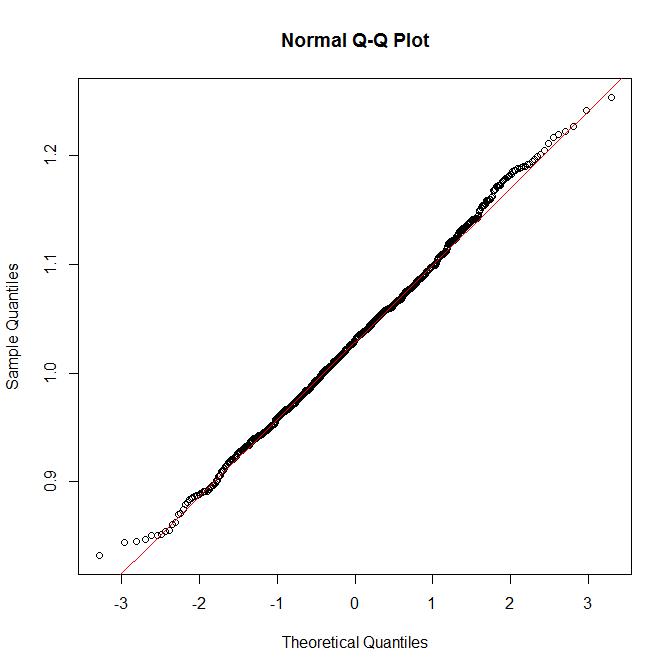}
	\end{figure}

	\begin{figure}
		\caption{Q-estimator for $n=100$ (left) and $n=1000$ (right)}
		\includegraphics[width=0.475\textwidth, height=0.4\textheight]{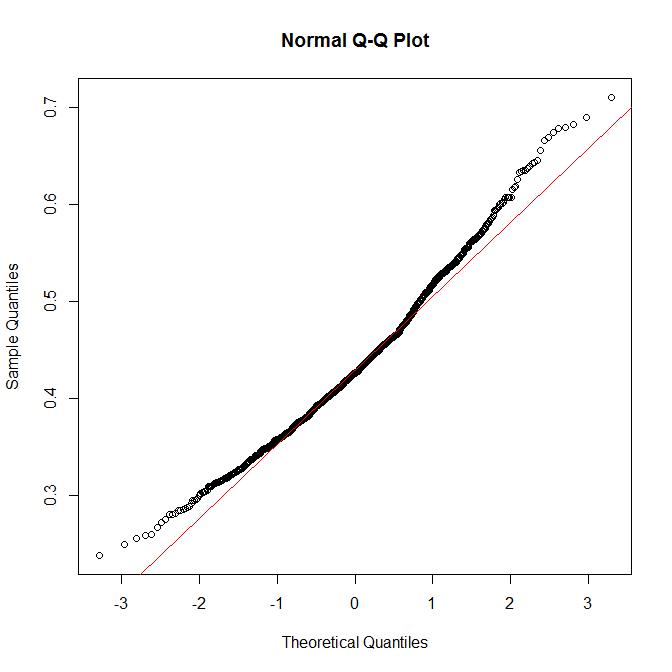}
		\hfill
		\includegraphics[width=0.475\textwidth, height=0.4\textheight]{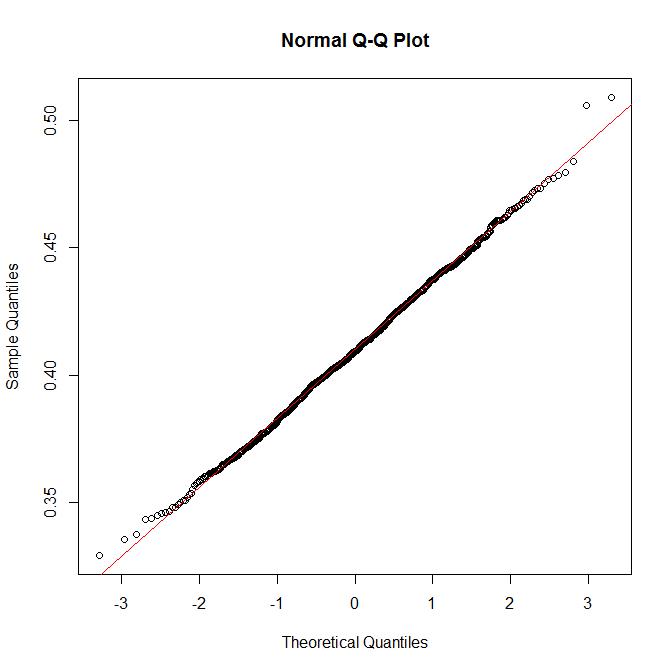}
	\end{figure}

\vspace{0.3cm}

The asymptotic normality of these estimators is therefore confirmed, although a sample length of about $n=1000$ is needed. The Gini's mean difference estimator proves to need the largest sample of the three estimators to be approximated well by a normal distribution.

It a second scenario we want to increase the dependence in the EGARCH process such that the AR(1) process as well as the EGARCH-process show very strong dependence. For this we choose $\alpha=0.8$ and $\beta=0.1$. The results can be found in Fig. \ref{Gmd2}-\ref{Qest2}. Because of the increased dependency within the EGARCH-process a larger sample size is needed to obtain a good approximation by the normal distribution. Similar to the first scenario, Gini's mean difference needs many data for a good approximation, whereas the $Q$-estimator does not need many more data in the presence of strong dependence.

\begin{figure}
	\includegraphics[width=0.475\textwidth, height=0.4\textheight]{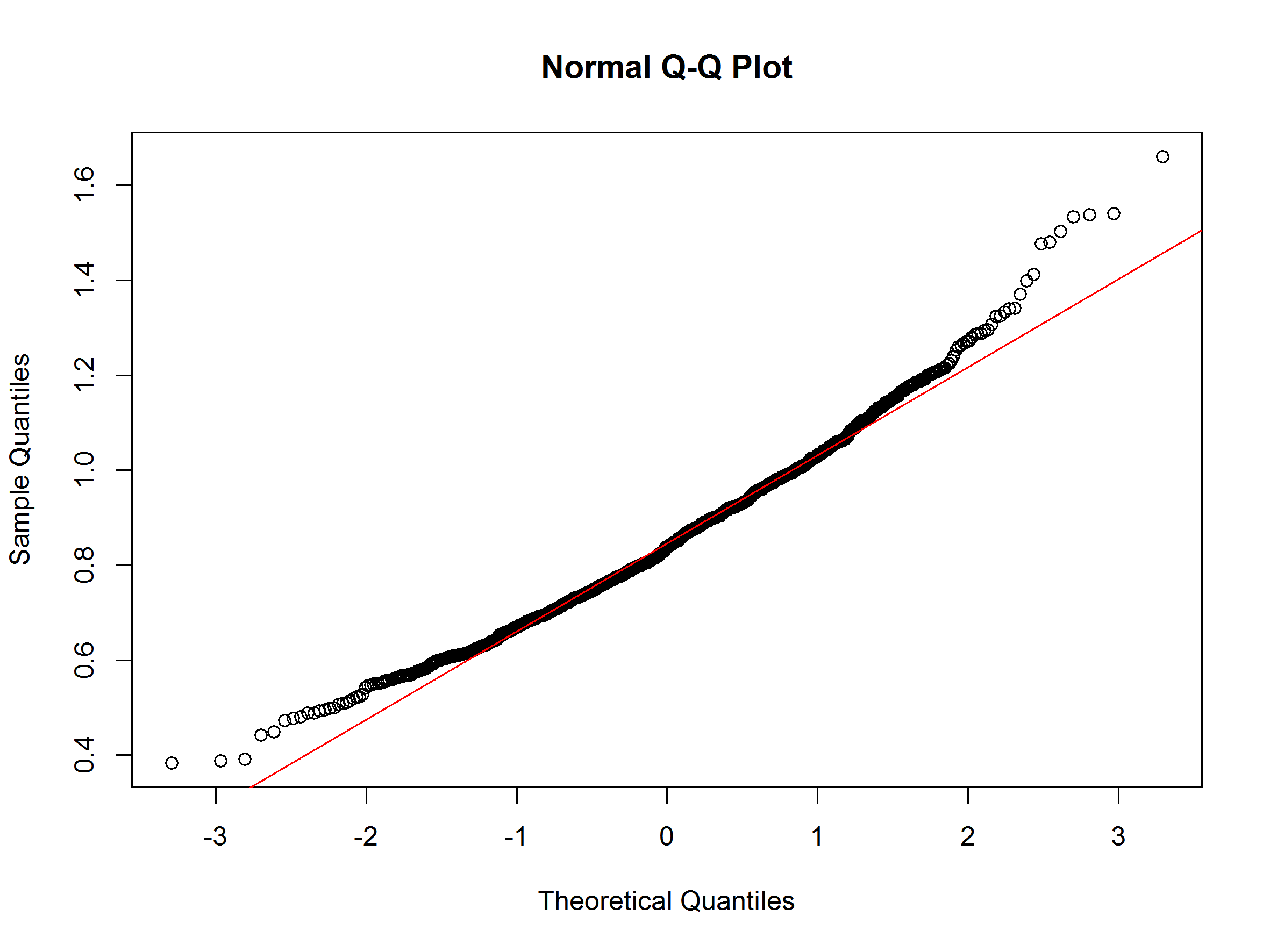}
	\hfill
	\includegraphics[width=0.475\textwidth, height=0.4\textheight]{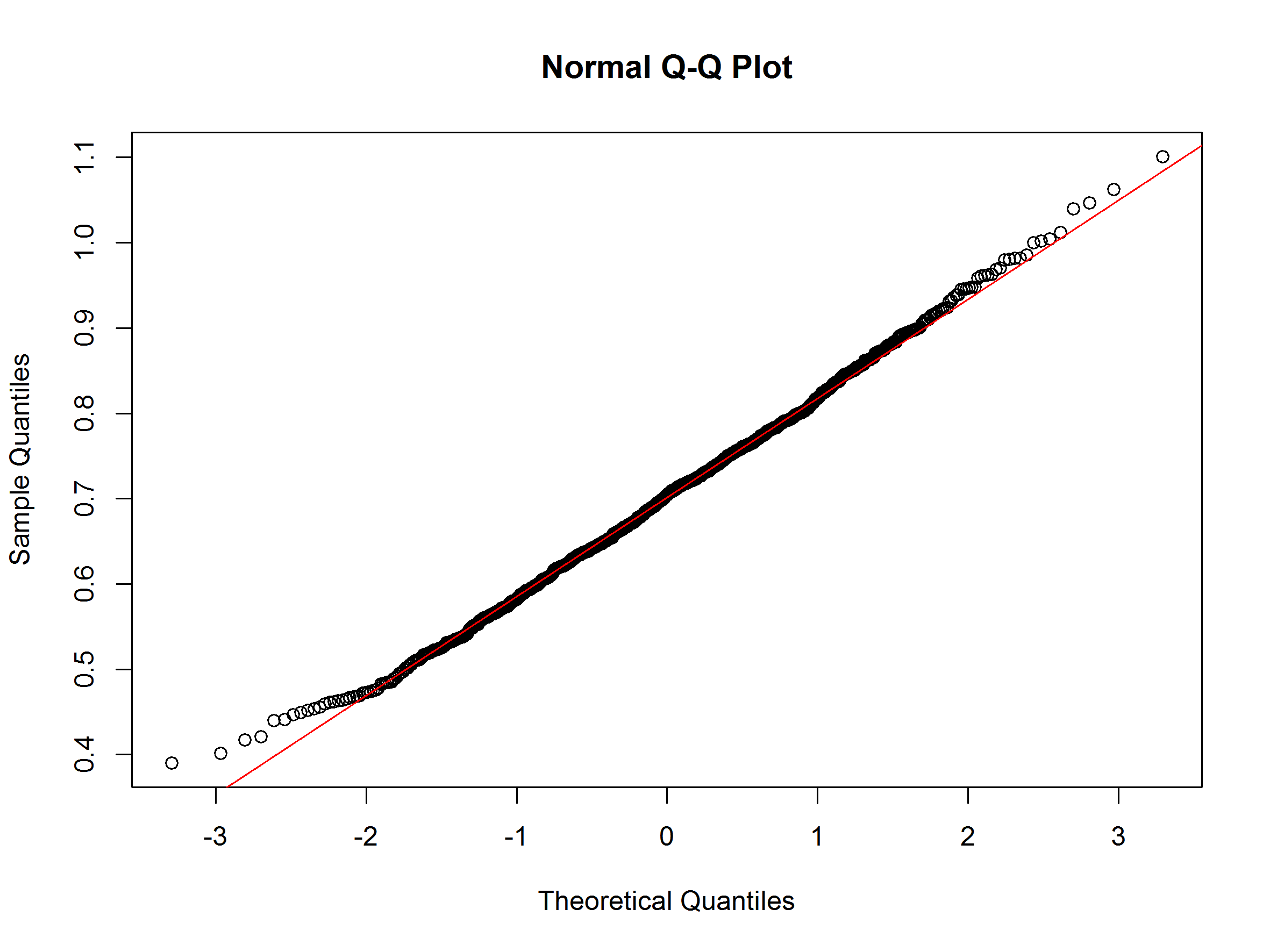}
	\caption{Normal QQ-Plot for Gini's Mean difference for $n=2000$ (left) and $n=5000$ (right) for strong dependence in the AR(1) process as well as in the EGARCH process. }
	\label{Gmd2}
\end{figure}

\begin{figure}
	\includegraphics[width=0.475\textwidth, height=0.4\textheight]{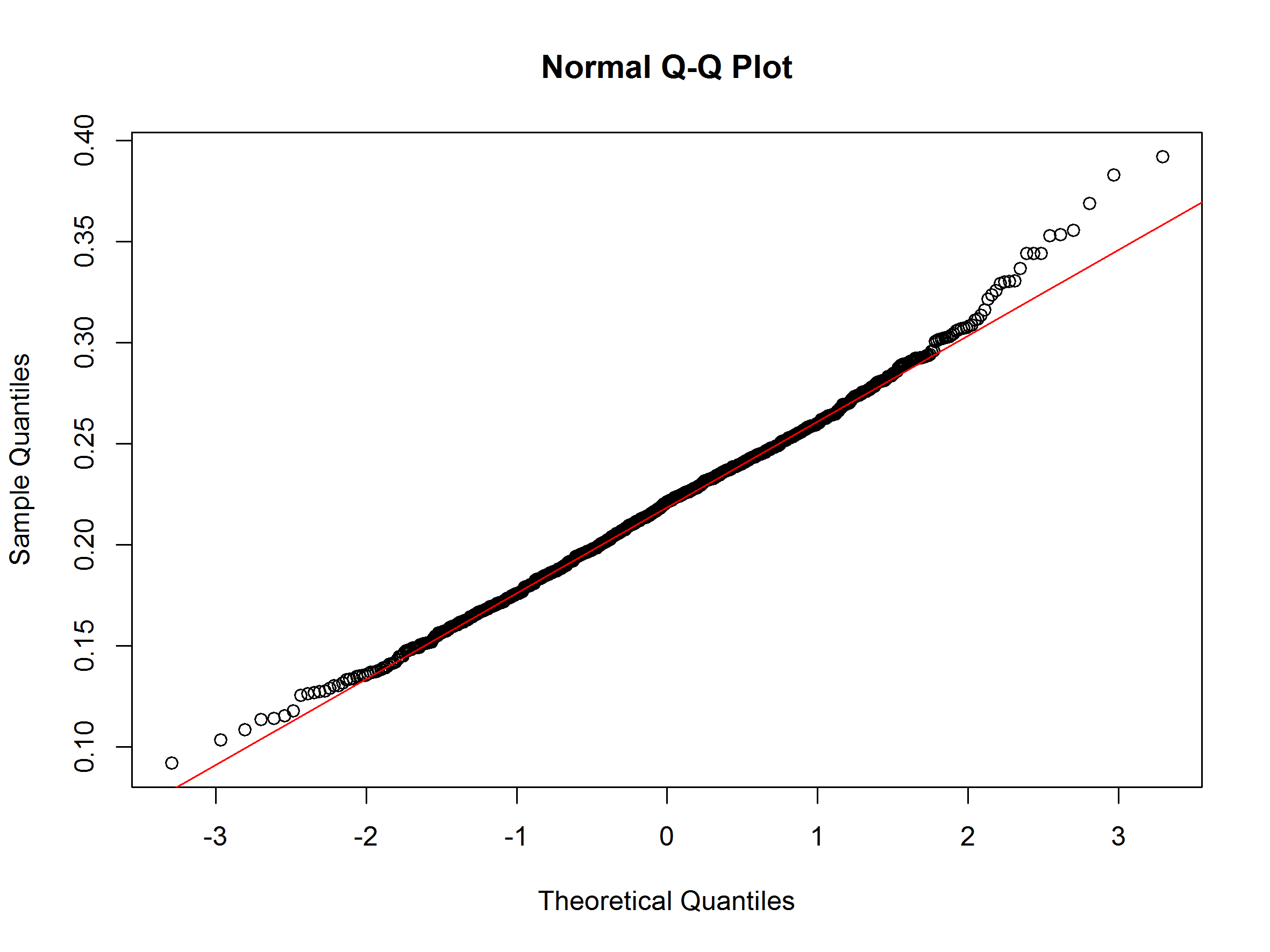}
	\hfill
	\includegraphics[width=0.475\textwidth, height=0.4\textheight]{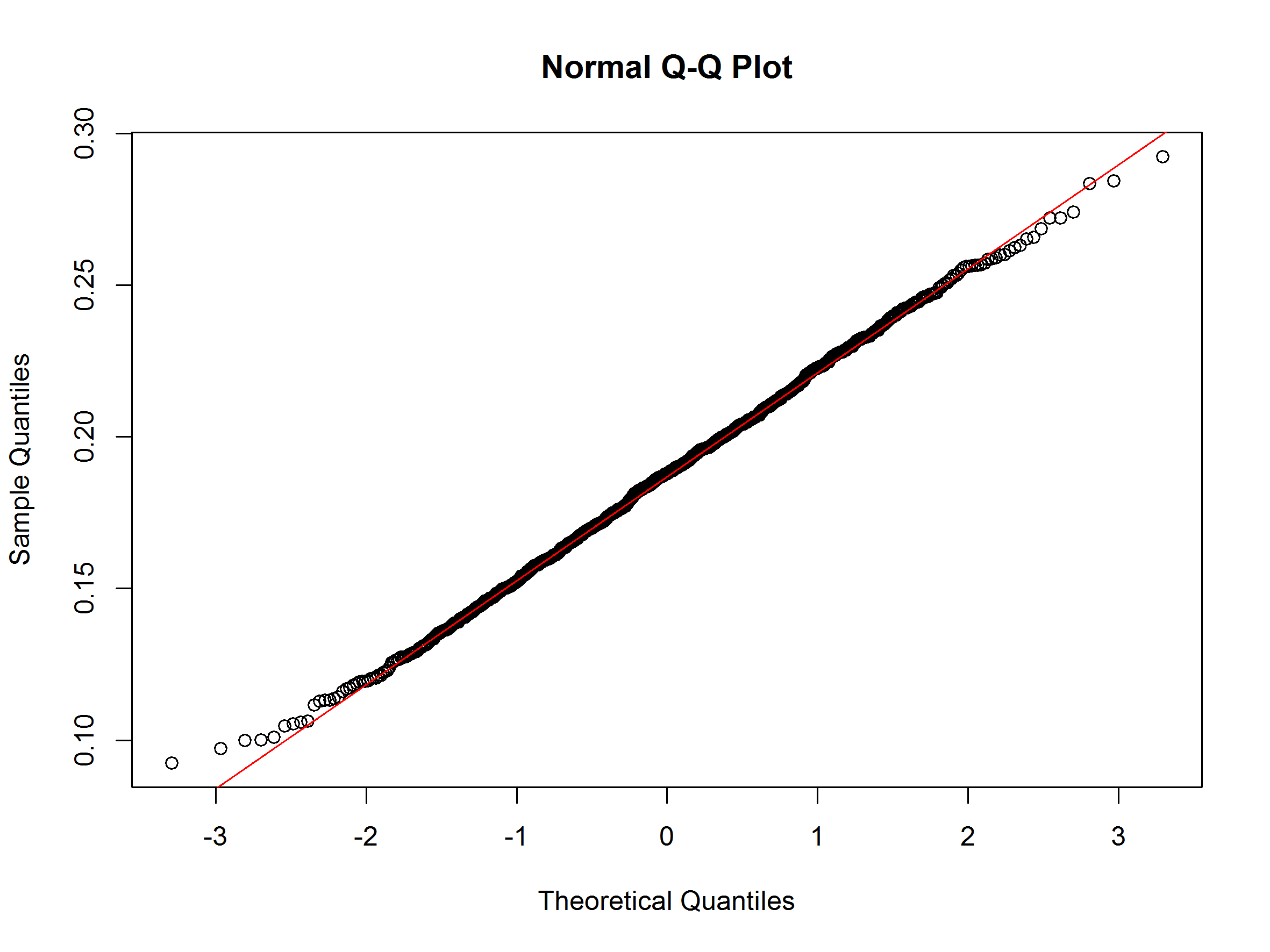}
	\caption{Normal QQ-Plot for the $LMS_n$-estimator for  $n=1000$ (bottom left) and $n=2000$ (bottom right) for strong dependence in the AR(1) process as well as in the EGARCH process. }
	\label{LMS2}
\end{figure}

\begin{figure}
	\includegraphics[width=0.475\textwidth, height=0.4\textheight]{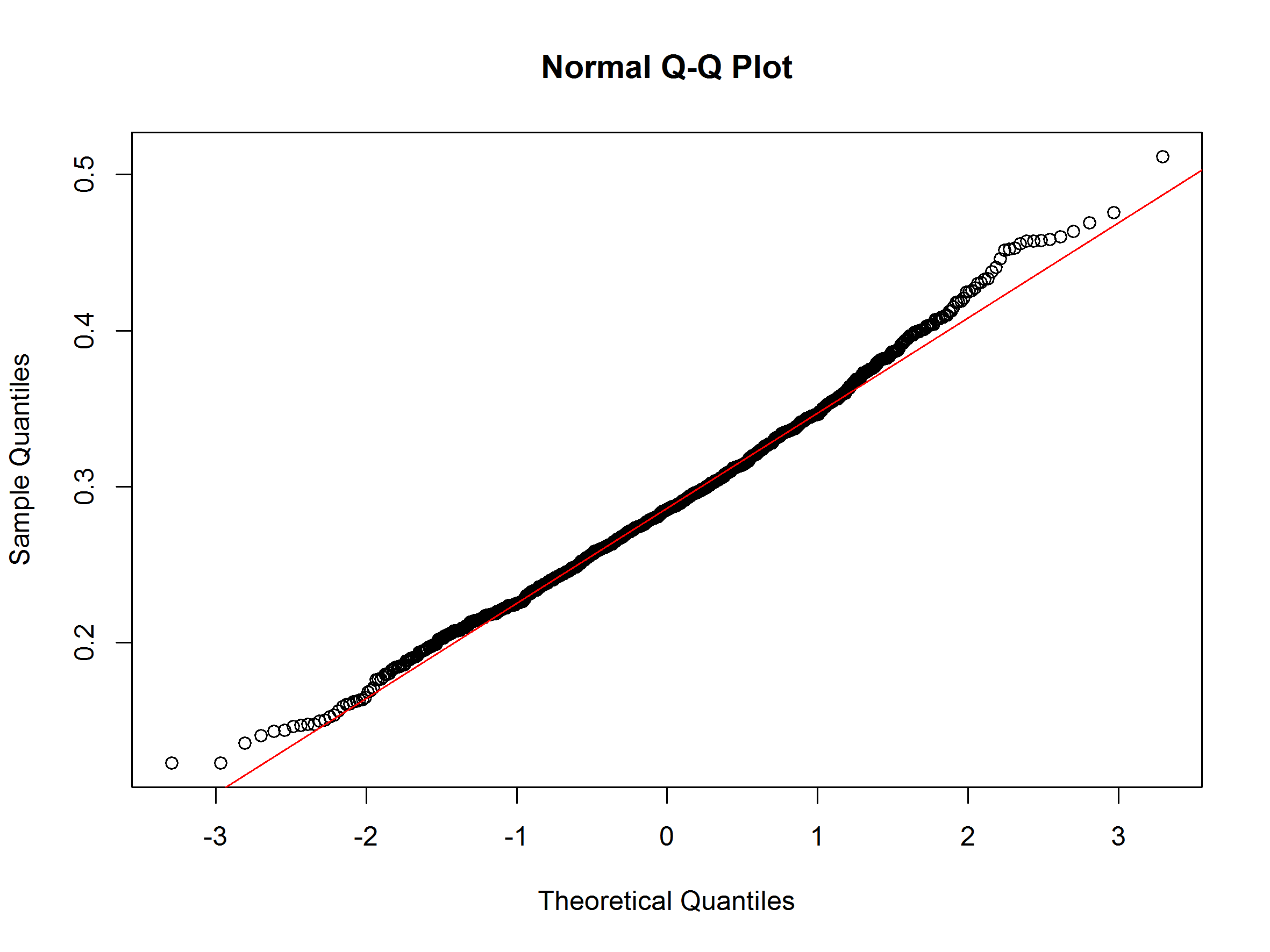}
	\hfill
	\includegraphics[width=0.475\textwidth, height=0.4\textheight]{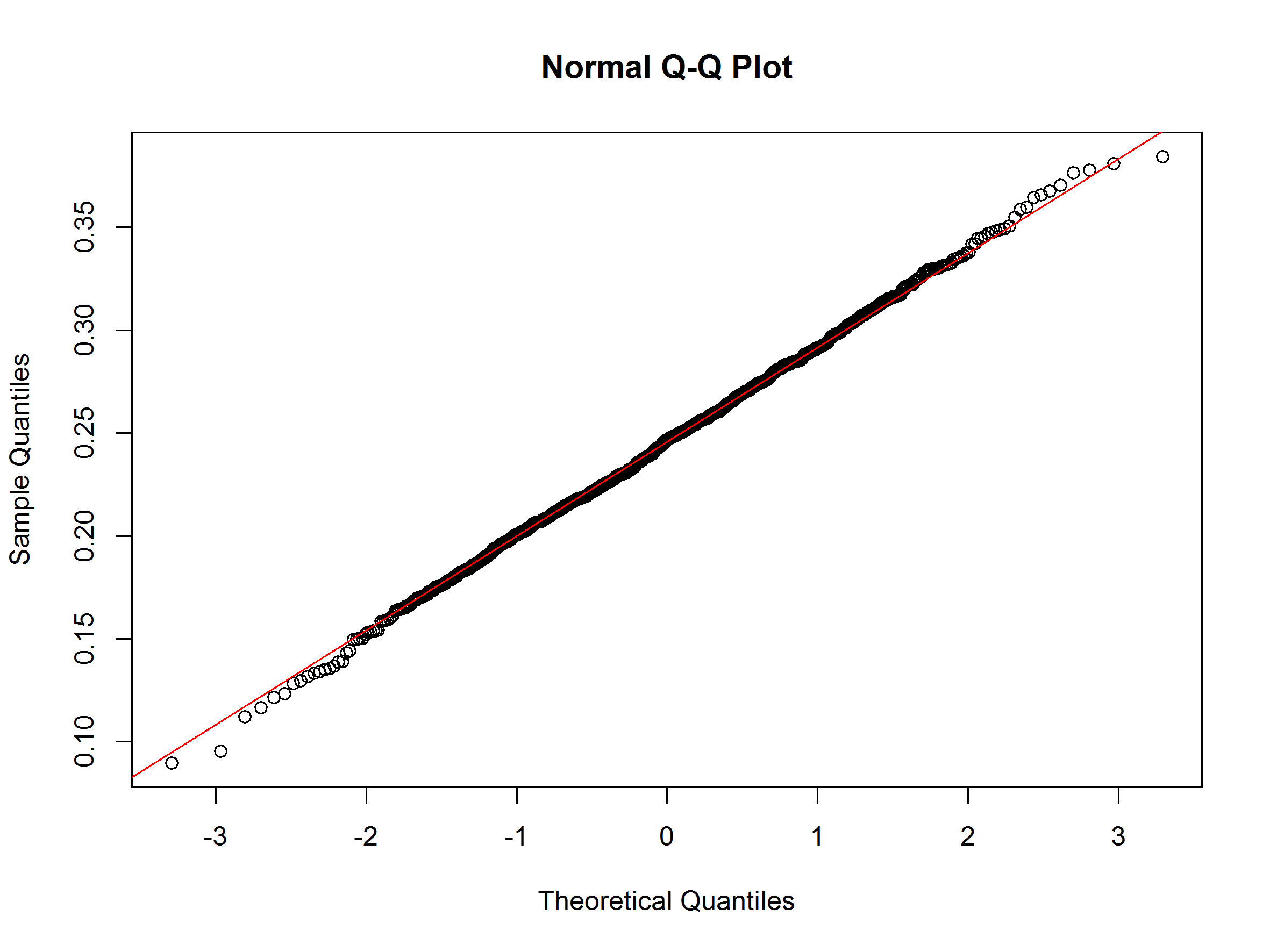}
	\caption{Normal QQ-Plot for the $Q$-estimator for  $n=1000$ (left) and $n=2000$ (right) for strong dependence in the AR(1) process as well as in the EGARCH process. }
	\label{Qest2}
\end{figure}

As expected the number of needed data in the sample to obtain a good approximation by the normal distribution increases with the dependence. Again, Gini's mean difference needs the largest sample ($n=5000$) for a good approximation. 


	
\section{Acknowledgements}
The financial support of the Deutsche Forschungsgemeinschaft (SFB 823, ”Statistical modelling
of non-linear dynamic processes”) is gratefully acknowledged. The author also would like to thank Marie Düker for her helpful suggestions concerning EGARCH-processes and Martin Wendler for many helpful discussions on this topic.

\nocite{*}
\newpage
\bibliography{LiteraturGL}

\appendix
	\section{Preliminary results}
	For the proofs of the main theorems some lemmata are needed. The following results are similar to the case of strong mixing (\cite{Fischer.2016}) but since the proofs need different arguments in some cases we state the proofs due to completeness.
	
	\vspace*{0.25cm}
	The first lemma is analogous to Lemma 4.2 in \cite{Fischer.2016} and an extension of Lemma 3.2.4 in \cite{Wen2011}.
	\begin{lemm}\label{covineq2}
		$\newline$
		Let $(X_n)_{n\in\mathbb{N}}$ be  NED with approximation constants $(a_l)_{l\in\mathbb{N}}$ on an absolutely regular process $(Z_n)_{n\in\mathbb{Z}}$ with mixing coefficients $(\beta(l))_{n\in\mathbb{N}}$. Moreover, let be $A_L=\sqrt{2\sum_{i=L}^\infty a_i}$ and let $h$ be bounded and satisfy the extendend variation condition. Then there exists for all $2\leq k \leq m $ a constant $C$, such that for
		
		$r=\max\left\{i_{(2)}-i_{(1)}, i_{(2k)}-i_{(2k-1)}\right\}$ with $i_{(1)}\leq\ldots\leq i_{(2k)}$ follows
		\begin{align*}
		\left\vert\mathbb{E}\left(g_k(X_{i_1},\ldots,X_{i_k})g_k(X_{i_{k+1}},\ldots,X_{i_{2k}})\right)\right\vert\leq C\left(\beta\left(\left[\frac{r}{3}\right]\right)+A_{\left[\frac{r}{3}\right]}\right). 
		\end{align*}
	\end{lemm}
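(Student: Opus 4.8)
The plan is to combine three approximation steps---an NED block approximation, a $\beta$-mixing decoupling, and the degeneracy of the Hoeffding kernel---arranged so that each contributes one of the terms $A_{[r/3]}$, $\beta([r/3])$, $A_{[r/3]}$. First I would observe that it suffices to treat the case $r = i_{(2)} - i_{(1)}$, in which the smallest index is separated from all the others by a gap of at least $r$; the case $r = i_{(2k)} - i_{(2k-1)}$ is symmetric, and since $r$ is defined as a maximum, at least one of these two situations occurs, producing an extreme variable with a one-sided gap equal to $r$. Write $p = i_{(1)}$ for this isolated index; by the symmetry of $g_k$ we may assume $X_p$ is the first argument of the group it belongs to. Throughout I would use that $h$ bounded implies $g_k$ bounded, and that $g_k$ inherits the variation condition from $h$, as recalled after Definition \ref{hoeff}.

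Set $L = [r/3]$ and define the block approximations $\hat X_{i_j} = \mathbb{E}(X_{i_j} \mid \mathcal{G}_{i_j - L}^{i_j + L})$. The first step is to replace every $X_{i_j}$ by $\hat X_{i_j}$ inside the product of kernels. Replacing the variables one at a time and using the boundedness of the untouched factor, each replacement is controlled by $\mathbb{E}\,|g_k(\ldots, X_{i_j}, \ldots) - g_k(\ldots, \hat X_{i_j}, \ldots)|$. Splitting according to whether $|X_{i_j} - \hat X_{i_j}|$ exceeds a threshold $\epsilon$, bounding the good part by the variation condition ($\le L\epsilon$) and the bad part by $2\|g_k\|_\infty\,\mathbb{P}(|X_{i_j} - \hat X_{i_j}| > \epsilon) \le 2\|g_k\|_\infty a_L/\epsilon$ via Markov, and optimizing over $\epsilon$ yields a bound of order $\sqrt{a_L} \le A_L$. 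Hence the total error of this step is $O(A_{[r/3]})$.

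The second step exploits the separation of blocks. Since $i_{(2)} \ge p + r \ge p + 3L$, the block $[p-L, p+L]$ generating $\hat X_p$ is separated from the blocks of all remaining $\hat X_{i_j}$ by a gap of at least $L$. Thus $\hat X_p$ is $\mathcal{G}_{-\infty}^{p+L}$-measurable while $(\hat X_{i_2},\ldots,\hat X_{i_{2k}})$ is $\mathcal{G}_{p+2L}^{\infty}$-measurable, and by stationarity the absolute-regularity coefficient between these $\sigma$-fields is at most $\beta(L)$. Using the coupling characterization of absolute regularity I would construct a variable $\hat X_p^{*}$ with the same law as $\hat X_p$, independent of $(\hat X_{i_2},\ldots,\hat X_{i_{2k}})$, and with $\mathbb{P}(\hat X_p \ne \hat X_p^{*}) \le \beta(L)$. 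Replacing $\hat X_p$ by $\hat X_p^{*}$ alters the bounded product only on an event of probability at most $\beta(L)$, costing $O(\beta([r/3]))$.

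Finally, since $\hat X_p^{*}$ is independent of all the other variables, conditioning on them and integrating out $\hat X_p^{*}$ turns the first kernel into $\int g_k(x, \cdot)\, d\hat F_p(x)$, where $\hat F_p$ is the law of $\hat X_p$. The degeneracy of $g_k$ gives $\int g_k(x, \cdot)\, dF(x) = 0$ for the \emph{true} marginal $F$, so the surviving quantity equals $\int g_k(x,\cdot)\,d\hat F_p(x) - \int g_k(x,\cdot)\,dF(x)$, which by the same good-set/bad-set argument applied to the coupling of $\hat X_p$ with $X_p$ is again $O(A_L)$, uniformly in the conditioning variables; multiplying by the bounded second kernel preserves this. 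Collecting the three contributions gives $C(\beta([r/3]) + A_{[r/3]})$, and the symmetric case is identical. The main obstacle, where the care is needed, is precisely this last step: degeneracy is available only against the true marginal, whereas decoupling forces an integration against the approximate law, so the whole scheme must be organized so that this mismatch is absorbed by one further use of the variation condition rather than destroying the vanishing of the leading term.
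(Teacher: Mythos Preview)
The paper does not supply its own proof of this lemma; it merely states that the result is ``analogous to Lemma 4.2 in \cite{Fischer.2016} and an extension of Lemma 3.2.4 in \cite{Wen2011}''. Your three-step scheme---NED block approximation of all arguments, $\beta$-mixing coupling to detach the isolated block, and a final passage from the law of $\hat X_p$ back to the true marginal so that the degeneracy $\int g_k(x,\cdot)\,dF(x)=0$ can be invoked---is exactly the argument underlying those references, so your proposal is in line with what the paper defers to.

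One point worth tightening: in your replacement steps you appeal to the variation condition while some of the non-perturbed slots already carry $\hat X_{i_j}$'s rather than genuine $X_{i_j}$'s or i.i.d.\ copies $Y_{i_j}$, which is strictly speaking not the configuration in the paper's Definition of the extended variation condition. In the cited proofs this is handled either by ordering the replacements so that at each stage the remaining arguments are still of the allowed type, or by noting that the variation bound really only uses boundedness of the relevant conditional densities and therefore transfers to the block approximations. This is a cosmetic issue rather than a genuine gap, but you should make the order of operations explicit so that each application of the variation condition matches its stated hypotheses.
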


	\begin{lemm}\label{finalcov}
		$\newline$
		Let the kernel $h$ be bounded and satisfy the extended variation condition.
		Moreover, let $(X_n)_{n\in\mathbb{N}}$ be NED with approximation constants $(a_l)_{l\in\mathbb{N}}$ on an absolutely regular process $(Z_n)_{n\in\mathbb{Z}}$ with mixing coefficients $(\beta(l))_{l\in\mathbb{N}}$ and $\sum_{l=0}^nl\left(\beta(l)+A_l\right)=O\left(n^{\gamma}\right)$ with $A_l=\sqrt{2\sum_{i=l}^\infty a_i}$ for a $\gamma>0$.
		Then for all $2\leq k\leq m$ and any constants $(c_{i,j})_{i,j\in \mathbb{N}}$
		\begin{align*}
		\sum_{i_1,\ldots,i_{2k}=1}^n\left|\mathbb{E}(g_k(X_{i_1},\ldots,X_{i_k})g_k(X_{i_{k+1}},\ldots,X_{i_{2k}})) c_{i_1,i_{k+1}}\right|= \max \limits_{i_1,i_{k+1}\in \lbrace 1,\ldots,n\rbrace}\vert c_{i_1,i_{k+1}}\vert O(n^{2k-2+\gamma}).
		\end{align*}
	\end{lemm}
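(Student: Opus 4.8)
The plan is to reduce the statement to a purely combinatorial estimate fed by the covariance bound of Lemma \ref{covineq2}. First I would pull the coefficients out of the sum,
\[
\sum_{i_1,\ldots,i_{2k}=1}^n \left| \mathbb{E}\bigl(g_k(X_{i_1},\ldots,X_{i_k})g_k(X_{i_{k+1}},\ldots,X_{i_{2k}})\bigr)\, c_{i_1,i_{k+1}} \right| \le \max_{i_1,i_{k+1}} |c_{i_1,i_{k+1}}| \sum_{i_1,\ldots,i_{2k}=1}^n \left| \mathbb{E}(g_k g_k) \right|,
\]
so that it remains to prove $\sum_{i_1,\ldots,i_{2k}=1}^n |\mathbb{E}(g_k g_k)| = O(n^{2k-2+\gamma})$. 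Writing $\phi(l)=\beta(l)+A_l$, which is nonincreasing since both summands are, Lemma \ref{covineq2} bounds each term by $C\,\phi([r/3])$, where $r=\max\{i_{(2)}-i_{(1)},\,i_{(2k)}-i_{(2k-1)}\}$ depends only on the order statistics of the index tuple.

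Since the summand depends only on the sorted positions, I would next pass to ordered tuples at the cost of a factor $(2k)!$, bounding the sum by $(2k)!\,C\sum_{1\le i_{(1)}\le\cdots\le i_{(2k)}\le n} \phi([r/3])$. The decay is governed only by the two outer gaps $g_1=i_{(2)}-i_{(1)}$ and $g_2=i_{(2k)}-i_{(2k-1)}$, so I would parametrize an ordered configuration by the base point $i_{(1)}$, the gap $g_1$, the $2k-3$ inner order statistics $i_{(3)},\ldots,i_{(2k-1)}$, and the gap $g_2$ (which then fixes $i_{(2k)}$). Letting $i_{(1)}$ and the inner indices range freely over $\{1,\ldots,n\}$ only enlarges the sum and produces the free factor $n\cdot n^{2k-3}=n^{2k-2}$, leaving
\[
\sum_{1\le i_{(1)}\le\cdots\le i_{(2k)}\le n} \phi([r/3]) \le n^{2k-2} \sum_{g_1=0}^n \sum_{g_2=0}^n \phi\!\left(\left[\tfrac{\max(g_1,g_2)}{3}\right]\right).
\]

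For the remaining double sum I would group by $M=\max(g_1,g_2)$: there are $2M+1$ pairs $(g_1,g_2)$ with $\max(g_1,g_2)=M$, so the sum is at most $\sum_{M=0}^n(2M+1)\,\phi([M/3])$, which after the substitution $l=[M/3]$ is comparable to $\sum_{l=0}^n l\,\phi(l)=\sum_{l=0}^n l\bigl(\beta(l)+A_l\bigr)=O(n^\gamma)$ by the hypothesis of the lemma. Combining the two factors gives $n^{2k-2}\cdot O(n^\gamma)=O(n^{2k-2+\gamma})$, which completes the estimate.

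The step I expect to be most delicate is the accounting of degrees of freedom. The inner order statistics are not controlled by $r$ at all and genuinely spread across the whole index set, so the crucial point is to exploit \emph{both} outer gaps simultaneously: had I discarded one of them and used only $\phi([r/3])\le\phi([g_1/3])$, the free sum over $g_2$ would cost an extra factor $n$ and yield merely the weaker bound $O(n^{2k-1+\gamma})$. Recognizing that $r=\max(g_1,g_2)$ lets both gaps be summed against the decaying $\phi$, thereby collapsing two would-be free indices into the single factor $O(n^\gamma)$, is precisely what makes the exponent $2k-2$ rather than $2k-1$ attainable.
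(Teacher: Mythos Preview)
Your proof is correct and follows essentially the same route as the paper: pull out $\max|c_{i_1,i_{k+1}}|$, apply Lemma~\ref{covineq2} to bound each expectation by $C\phi([r/3])$, and then count index configurations to extract the factor $n^{2k-2}\sum_l l\,\phi(l)=O(n^{2k-2+\gamma})$. The only difference is bookkeeping: the paper fixes $i_{(1)}$ and $i_{(2k)}$ and, for a given value $l$ of the larger outer gap, notes that one of $i_{(2)},i_{(2k-1)}$ is forced and the other has at most $l$ choices, whereas you parametrize by the two gaps $g_1,g_2$ and then group by $M=\max(g_1,g_2)$; both lead to the same product $n^{2k-2}\cdot O(n^\gamma)$.
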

	This lemma is analogous to Lemma 4.3 in \cite{Fischer.2016} and can be proved similar.

	\begin{proof}
		$\newline$
		Again set $\lbrace i_1,\ldots,i_{2k}\rbrace=\lbrace i_{(1)},\ldots,i_{(2k)}\rbrace$ with $i_{(1)}\leq \ldots \leq i_{(2k)}$. We can rewrite the above sum as
		\begin{align*}
		&\sum_{i_1,\ldots,i_{2k}=1}^n\left|\mathbb{E}(g_k(X_{i_1},\ldots,X_{i_k})g_k(X_{i_{k+1}},\ldots,X_{i_{2k}}))c_{i_1,i_{k+1}}\right|\\
		&\leq\max \limits_{i_1,i_{k+1}\in \lbrace 1,\ldots,n\rbrace}\vert c_{i_1,i_{k+1}}\vert\sum_{i_1,\ldots,i_{2k}=1}^n\left|\mathbb{E}(g_k(X_{i_1},\ldots,X_{i_k})g_k(X_{i_{k+1}},\ldots,X_{i_{2k}}))\right|\\
		&=\max \limits_{i_1,i_{k+1}\in \lbrace 1,\ldots,n\rbrace}\vert c_{i_1,i_{k+1}}\vert\sum_{l=0}^n\sum_{\stackrel{i_1,\ldots,i_{2k}=1}{\max\left\{ i_{(2)}-i_{(1)},i_{(2k)}-i_{(2k-1)}\right\}=l}}^n\left|\mathbb{E}(g_k(X_{i_1},\ldots,X_{i_k})g_k(X_{i_{k+1}},\ldots,X_{i_{2k}}))\right|\\
		&\leq \max \limits_{i_1,i_{k+1}\in \lbrace 1,\ldots,n\rbrace}\vert c_{i_1,i_{k+1}}\vert C \sum_{l=0}^n\sum_{\stackrel{i_1,\ldots,i_{2k},}{\max\lbrace i_{(2)}-i_{(1)},i_{(2k)}-i_{(2k-1)}\rbrace=l}} \left(\beta\left( \left[\frac{l}{3}\right]\right)+A_{\left[\frac{l}{3}\right]}\right),
		\end{align*}
		by application of Lemma \ref{covineq2}.
		
		Now we want to simplify the expression by using a calculation of the quantity of $(i_1,\ldots,i_{2k})$ where $\max\lbrace i_{(2)}-i_{(1)},i_{(2k)}-i_{(2k-1)}\rbrace=l$. Using combinatorical arguments we can see that there exist $(2k)!$ possibilities to gain the same sequence $i_{(1)},\ldots,i_{(2k)}$. We now fix $i_{(1)}$ and $i_{(2k)}$, having $n^2$ possibilities for this. Having in mind that $\max\lbrace i_{(2)}-i_{(1)},i_{(2k)}-i_{(2k-1)}\rbrace=l$ and suppose $i_{(2)}-i_{(1)}=\max\lbrace i_{(2)}-i_{(1)},i_{(2k)}-i_{(2k-1)}\rbrace=l$ then $i_{(2)}$ is automatically determined by the choice of $i_{(1)}$. Then, $i_{(2k-1)}$ can only take $l$ distinct values. Supposing $i_{(2k)}-i_{(2k-1)}=\max\lbrace i_{(2)}-i_{(1)},i_{(2k)}-i_{(2k-1)}\rbrace=l$ the same is valid. All remaining values of the $k$-tuple are arbitrary. Therefore, the quantity of the summands equals  $(2k)!\cdot n^2ln^{2k-4}=l\cdot(2k)!\cdot n^{2k-2}$ and
		
		\begin{align*}
		&\sum_{i_1,\ldots,i_{2k}=1}^n\left|\mathbb{E}(g_k(X_{i_1},\ldots,X_{i_k})g_k(X_{i_{k+1}},\ldots,X_{i_{2k}}))c_{i_1,i_{k+1}}\right|\\
		&\leq \max \limits_{i_1,i_{k+1}\in \lbrace 1,\ldots,n\rbrace}\vert c_{i_1,i_{k+1}}\vert C'n^{2k-2}\sum_{l=0}^nl\left(\beta\left( \left[\frac{l}{3}\right]\right)+A_{\left[\frac{l}{3}\right]}\right)=\max \limits_{i_1,i_{k+1}\in \lbrace 1,\ldots,n\rbrace}\vert c_{i_1,i_{k+1}}\vert O(n^{2k-2+\gamma}).
		\end{align*} 
		
	\end{proof}

	\begin{lemm}\label{remterm}
		$\newline$
		Let $h$ be a kernel satisfying the extended uniform variation condition, such that the $U$-distribution function $U$ is Lipschitz continuous. Moreover let 
		$(X_n)_{n\in\mathbb{N}}$ be NED with approximation constants $a_l=O\left(l^{-a}\right)$, where $a=\max\left\{\eta+3,12\right\}$, on an absolutely regular process $(Z_n)_{n\in\mathbb{Z}}$ with mixing coefficients $\beta(l)=O\left(l^{-\eta}\right)$ for a $\eta\geq 8$
		Then for all $2\leq k \leq m$ and $\gamma=\frac{\eta-3}{\eta+1}$ we have 
		\begin{align*}
		\sup\limits_{t\in\mathbb{R}}\big\vert\sum_{1\leq i_1,\ldots,i_k\leq n}g_k(X_{i_1},\ldots,X_{i_k},t)\big\vert=o(n^{k-\frac{1}{2}-\frac{\gamma}{8}})~\text{a.s.}.
		\end{align*}
	\end{lemm}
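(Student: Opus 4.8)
The plan is to bound the completely degenerate sum $S_n(t)=\sum_{1\le i_1,\ldots,i_k\le n}g_k(X_{i_1},\ldots,X_{i_k},t)$ uniformly in $t$ by combining a moment estimate at fixed $t$, taken from Lemma \ref{finalcov}, with a discretization of the parameter $t$ that exploits the monotone structure of the underlying indicator kernel $g(x_1,\ldots,x_m,t)=1_{[h(x_1,\ldots,x_m)\le t]}$ and the Lipschitz continuity of $U=H_F$. First I fix $2\le k\le m$ and note that, since the kernel tends to the constants $0$ and $1$ as $t\to-\infty$ and $t\to\infty$, whose Hoeffding projections vanish, one has $g_k(\cdot,t)\to 0$ and hence $S_n(t)\to 0$ at both ends; the supremum is therefore effectively taken over a compact range of $U$-values.

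For fixed $t$ the second moment is controlled directly by Lemma \ref{finalcov}. Expanding the square gives $\mathbb{E}(S_n(t)^2)=\sum_{i_1,\ldots,i_{2k}=1}^n \mathbb{E}(g_k(X_{i_1},\ldots,X_{i_k},t)\,g_k(X_{i_{k+1}},\ldots,X_{i_{2k}},t))$, which with $c_{i,j}\equiv 1$ is $O(n^{2k-2+\gamma})$ as soon as $\sum_{l=0}^n l(\beta(l)+A_l)=O(n^\gamma)$. Under the present hypotheses $\beta(l)=O(l^{-\eta})$ with $\eta\ge 8$ and $a_l=O(l^{-a})$ with $a=\max\{\eta+3,12\}$, so $A_l=O(l^{(1-a)/2})$ and both $\sum_l l\beta(l)$ and $\sum_l lA_l$ converge, so the hypothesis of Lemma \ref{finalcov} is met. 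A single second moment is not summable enough for an almost-sure statement, so the essential step is to extend the covariance estimate of Lemma \ref{covineq2} to $2p$-fold products of the $g_k$: grouping the $2pk$ indices into blocks and bounding the dependence across blocks through $\beta([\,\cdot/3])$ and $A_{[\,\cdot/3]}$ exactly as in Lemma \ref{covineq2}, while the degeneracy of $g_k$ annihilates the configurations in which one block is separated from all others. This should yield an even-moment bound of the form $\mathbb{E}(S_n(t)^{2p})=O(n^{p(2k-2)+o(1)})$, after which Markov's inequality gives a tail bound with an arbitrarily high polynomial power of $n$ in the denominator.

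With such a moment bound available I discretize $t$. Since $U=H_F$ is Lipschitz I can pick a grid $t_0<t_1<\cdots<t_{N_n}$ with $U(t_{j+1})-U(t_j)\le n^{-\rho}$ using only $N_n=O(n^{\rho})$ points, and split $\sup_t|S_n(t)|\le \max_{0\le j\le N_n}|S_n(t_j)|+\max_j\sup_{t\in[t_j,t_{j+1}]}|S_n(t)-S_n(t_j)|$. For the grid maximum I apply the $2p$-th moment tail bound to each of the $N_n$ points and sum. For the oscillation term I use that on $[t_j,t_{j+1}]$ the increment of the kernel is dominated by $1_{[t_j<h\le t_{j+1}]}$, a kernel of $U$-mass at most $n^{-\rho}$; the variation condition, inherited by the Hoeffding kernels as shown in \cite{Fischer.2016}, together with Lemma \ref{finalcov} then bounds the oscillation uniformly, and choosing $\rho$ large enough renders it negligible relative to the target rate. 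Making the summed grid probabilities summable in $n$ by a suitable choice of $p$ and $\rho$, Borel--Cantelli then yields $\sup_t|S_n(t)|=o(n^{k-1/2-\gamma/8})$ almost surely; the precise exponent $\gamma=(\eta-3)/(\eta+1)$ and the cushion $1/8$ arise from balancing the usable moment order against $N_n$, the largest $p$ being capped by the decay rate $\eta$.

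The hard part will be the higher-moment estimate for the completely degenerate kernel $g_k$ under near epoch dependence on an absolutely regular process, uniformly in $t$. The $2p$-fold products do not factorize, and one must insert the $L_1$-approximation of each $X_i$ by a function of finitely many $Z_j$'s into a genuinely multilinear expression without sacrificing the gain coming from degeneracy; the delicate bookkeeping is tracking which index blocks are separated, so that $\beta$ applies, versus which are merely approximated, so that $A_l$ applies. Once this moment bound and the Lipschitz continuity of $U$ are in hand, the remaining ingredients—the discretization, the oscillation control, and the Borel--Cantelli step—are routine.
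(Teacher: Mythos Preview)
The paper does not actually prove this lemma; it is stated in the appendix without argument and without an explicit pointer to a proof elsewhere, although the neighbouring lemmata refer to \cite{Fischer.2016} and \cite{Wen2011} for the strong-mixing and bivariate analogues. Your outline---a pointwise moment bound for the degenerate sum, a polynomial-size grid in $t$ using the Lipschitz continuity of $U$, oscillation control between grid points via the monotone indicator structure, and Borel--Cantelli---is the standard strategy for results of this type and is almost certainly what the author has in mind.

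One refinement is worth making. You propose obtaining general $2p$-th moment bounds and choosing $p$ large enough. In the existing proofs of the analogous results (Wendler's thesis and the strong-mixing version in \cite{Fischer.2016}) only a fourth-moment bound is used, and the factor $1/8$ in the target exponent is precisely the trace of a fourth-moment Markov inequality after the chaining step. The specific value $\gamma=(\eta-3)/(\eta+1)$ then emerges from the fourth-moment analogue of Lemma~\ref{covineq2}, in which one bounds $\mathbb{E}\big(\prod_{r=1}^{4}g_k(X_{i_{(r-1)k+1}},\ldots,X_{i_{rk}},t)\big)$ by $\beta+A$ evaluated at the second-largest gap among the $4k$ ordered indices, the degeneracy of $g_k$ killing the configurations where one block is isolated. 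So rather than aiming for arbitrary even moments under NED---which, as you rightly flag, is delicate---it suffices to carry out this one concrete fourth-moment extension of Lemma~\ref{covineq2}. That is still the technical heart of the argument, but it is a bounded task rather than an open-ended one, and it matches the stated conditions on $\eta$ and $a$.
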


	\begin{coro}\label{glican}
		$\newline$
		Let $(X_n)_{n\in\mathbb{N}}$ be NED with approximation constants $(a_l)_{l\in\mathbb{N}}$ on an absolutely regular process $(Z_n)_{n\in\mathbb{Z}}$ with mixing coefficients $(\beta(l))_{l\in\mathbb{N}}$ with  $\sum_{l=1}^\infty l^2 \beta^{\frac{\gamma}{2+\gamma}}(l)< \infty$, $0<\gamma <1$. Additionally, let be  $\sum_{l=1}^\infty l^2 a_l^{\frac{\gamma}{2+2\gamma}} <\infty$ as well as $\sum_{l=1}^\infty l^2 \left(L\sqrt{2A_l}\right)^{\frac{\delta}{1+\delta}}<\infty$ for a $\delta>8$, where $L$ is the variation constant of the kernel $g(x_1,\ldots,x_m)=1_{\left[h(x_1,\ldots,x_m)\leq t\right]}$ and $A_L=\sqrt{2\sum_{i=l}^\infty a_i}$.
		Moreover let $h$ be a Lipschitz-continuous kernel with distribution function $H_F$ and related density $h_F<\infty$ and for all $2\leq k \leq m$ let $h_{F;X_2,\ldots,X_k}$ be bounded.
		Then
		\begin{align*}
		\sup\limits_{t\in\mathbb{R}}\left\vert\sqrt{n}\left(H_n(t)-H_F(t)\right)\right\vert=O_p(1).
		\end{align*}
	\end{coro}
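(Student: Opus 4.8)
The plan is to treat $\sqrt{n}(H_n(t)-H_F(t))$ as the centred empirical $U$-process with kernel $g(x_1,\ldots,x_m,t)=1_{[h(x_1,\ldots,x_m)\leq t]}$ and to apply, for each fixed $t$, the Hoeffding decomposition of Definition \ref{hoeff}. This writes
\begin{align*}
\sqrt{n}(H_n(t)-H_F(t))=\frac{m}{\sqrt{n}}\sum_{i=1}^n g_1(X_i,t)+\sqrt{n}\sum_{k=2}^m\binom{m}{k}\frac{1}{\binom{n}{k}}S_{kn}(t),
\end{align*}
with $S_{kn}(t)=\sum_{1\leq i_1<\ldots<i_k\leq n}g_k(X_{i_1},\ldots,X_{i_k},t)$. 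The strategy is that the linear part is uniformly tight by the invariance principle, while the degenerate parts vanish uniformly in $t$ after multiplication by $\sqrt{n}$.

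First I would handle the linear part. The hypotheses on $\beta(l)$ and $a_l$ imposed in the corollary are precisely those of Theorem \ref{invpri}, so
\begin{align*}
\left(\frac{m}{\sqrt{n}}\sum_{i=1}^n g_1(X_i,t)\right)_{t\in\mathbb{R}}\stackrel{D}{\longrightarrow}(W(t))_{t\in\mathbb{R}},
\end{align*}
where $W$ has continuous paths with probability one. Since the supremum functional is continuous on the path space and the limit has almost surely finite supremum, the continuous mapping theorem yields $\sup_{t\in\mathbb{R}}\big|\frac{m}{\sqrt{n}}\sum_{i=1}^n g_1(X_i,t)\big|\stackrel{D}{\longrightarrow}\sup_{t\in\mathbb{R}}|W(t)|$, so this supremum is $O_p(1)$.

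Next I would control the degenerate terms. Bounding the ordered index sum by the full unordered sum and invoking Lemma \ref{remterm}, for every $2\leq k\leq m$,
\begin{align*}
\sup_{t\in\mathbb{R}}|S_{kn}(t)|\leq\sup_{t\in\mathbb{R}}\Big|\sum_{1\leq i_1,\ldots,i_k\leq n}g_k(X_{i_1},\ldots,X_{i_k},t)\Big|=o\big(n^{k-\frac12-\frac{\gamma}{8}}\big)\quad\text{a.s.}
\end{align*}
Because $\binom{n}{k}^{-1}=O(n^{-k})$, multiplying by $\sqrt{n}$ gives
\begin{align*}
\sqrt{n}\,\frac{1}{\binom{n}{k}}\sup_{t\in\mathbb{R}}|S_{kn}(t)|=n^{1/2}\,O(n^{-k})\,o\big(n^{k-\frac12-\frac{\gamma}{8}}\big)=o\big(n^{-\gamma/8}\big)\longrightarrow 0\quad\text{a.s.}
\end{align*}
Summing over the finitely many $k\in\{2,\ldots,m\}$ shows the whole degenerate part is $o(1)$ uniformly in $t$ almost surely, hence $o_p(1)$. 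Combining the two pieces, $\sup_t|\sqrt{n}(H_n(t)-H_F(t))|$ is dominated by an $O_p(1)$ term plus an $o_p(1)$ term, which gives the claimed $O_p(1)$.

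The main obstacle is the bookkeeping needed to make both auxiliary results applicable rather than any fresh analytic difficulty. One must check that the summability assumptions of the corollary deliver the polynomial decay rates $\beta(l)=O(l^{-\eta})$ with $\eta\geq 8$ and $a_l=O(l^{-\max\{\eta+3,12\}})$ demanded by Lemma \ref{remterm}; this is argued, as in the proof of Theorem \ref{kernest}, from the monotonicity of the mixing coefficients together with the finiteness of $\sum_l l^2\beta^{\gamma/(2+\gamma)}(l)$ and $\sum_l l^2 a_l^{\gamma/(2+2\gamma)}$. One must also confirm that the indicator kernel $1_{[h\leq t]}$ satisfies the extended uniform variation condition required by Lemma \ref{remterm} and that the limiting process has continuous paths; this is exactly where the Lipschitz continuity of $h$ and the boundedness of the densities $h_{F;X_{i_2},\ldots,X_{i_k}}$ enter, ensuring that $H_F$ is Lipschitz.
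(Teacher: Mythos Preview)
Your proposal is correct and follows essentially the same route as the paper: Hoeffding decomposition, then Theorem \ref{invpri} together with the continuous mapping theorem for the linear part, and Lemma \ref{remterm} for the degenerate terms, combined by Slutsky. The only difference is emphasis: the paper spells out in full the verification that $g(x_1,\ldots,x_m,t)=1_{[h(x_1,\ldots,x_m)\leq t]}$ satisfies the extended uniform variation condition (using Lipschitz continuity of $h$ and boundedness of $h_F$ and $h_{F;X_{i_2},\ldots,X_{i_k}}$), whereas you correctly flag this as the required bookkeeping without writing the computation.
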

	This corollary is a straightforward result from Theorem \ref{invpri} using Lemma \ref{finalcov}.
	The assumptions on the coefficients $a_l$ and $\beta_l$ of Lemma \ref{finalcov} are automatically fulfilled by the assumptions of Theorem \ref{invpri}.
	
	\begin{proof}
		$\newline$
		We again use the Hoeffding decomposition having
		\begin{align*}
		&\sup\limits_{t\in\mathbb{R}}\left\vert\sqrt{n}\left(H_n(t)-H_F(t)\right)\right\vert\\
		&=\sup\limits_{t\in\mathbb{R}}\left\vert\sqrt{n}\left(H_F(t)+\sum_{j=1}^{m}\binom{m}{j}\frac{1}{\binom{n}{j}}S_{jn,t}-H_F(t)\right)\right\vert\\
		&=\sup\limits_{t\in\mathbb{R}}\big\vert\frac{m}{\sqrt{n}}\sum_{i=1}^{n}g_1(X_i,t)+\sqrt{n}\frac{\binom{m}{2}}{\binom{n}{2}}\sum_{1\leq i<j\leq n}g_2(X_i,X_j,t)\\
		&+\ldots+\sqrt{n}\frac{1}{\binom{n}{m}}\sum_{1\leq i_1 <\ldots<i_m\leq n}g_m(X_{i_1},\ldots,X_{i_m},t)\big\vert\\
		&\leq \sup\limits_{t\in\mathbb{R}}\left\vert\frac{m}{\sqrt{n}}\sum_{i=1}^{n}g_1(X_i,t)\right\vert+\sup\limits_{t\in\mathbb{R}}\left\vert\sqrt{n}\frac{\binom{m}{2}}{\binom{n}{2}}\sum_{1\leq i<j\leq n}g_2(X_i,X_j,t)\right\vert\\
		&+\ldots+\sup\limits_{t\in\mathbb{R}}\left\vert\sqrt{n}\frac{1}{\binom{n}{m}}\sum_{1\leq i_1 <\ldots<i_m\leq n}g_m(X_{i_1},\ldots,X_{i_m},t)\right\vert.
		\end{align*}
		For the first term of the sum, using Theorem \ref{invpri} and the continuous mapping theorem, we get
		\begin{align*}
		\sup\limits_{t\in\mathbb{R}}\left\vert\frac{m}{\sqrt{n}}\sum_{i=1}^{n}g_1(X_i,t)\right\vert\rightarrow \lVert W \rVert_{\infty}.
		\end{align*}
		Since $W$ is a continuous Gaussian process we have $\lVert W \rVert_{\infty}=O_p(1)$.
		
		\vspace*{0.25cm}
		
		For the remaining results we want to apply Lemma \ref{remterm}. Therefore the kernel of the $U$-process $g(x_1,\ldots,x_m,t)=1_{[h(x_1,\ldots,x_m)\leq t]}$ has to satisfy the extended uniform variation condition. We use the Lipschitz continuity of $h$ for this. 
		\begin{align*}
		&\sup\limits_{\lVert(x_1,\ldots,x_m)-({X'}_1,\ldots,{X'}_m)\rVert\leq \epsilon }\left\vert 1_{[h({X'}_1,\ldots,{X'}_m)\leq t]}-1_{[h({x}_1,\ldots,{x}_m)\leq t]}\right\vert\\
		=&\begin{cases}
		1 &\text{, if } h({X'}_1,\ldots,{X'}_m) \in \left(t-L\epsilon,
		t+L\epsilon\right)\\
		0 &\text{, else}
		\end{cases}
		\end{align*}
		and so
		\begin{align*}
		&\mathbb{E}\left(\sup\limits_{\lVert(x_1,\ldots,x_m)-({X'}_1,\ldots,{X'}_m)\rVert\leq \epsilon }\left\vert1_{[h({X'}_1,\ldots,{X'}_m)\leq t]}-1_{[h({x}_1,\ldots,{x}_m)\leq t]}\right\vert\right)\\
		&\leq \sup\limits_{t \in \mathbb{R} }\left\vert\mathbb{E}\left(1_{[h({X'}_1,\ldots,{X'}_m)\in (t-L\epsilon,t+L\epsilon)]}\right)\right\vert\\
		&\leq \sup\limits_{t \in \mathbb{R} }\left\vert\int_{t-L\epsilon}^{t+L\epsilon}h_F(x)dx\right\vert\leq 2L\epsilon(\sup\limits_{x\in \mathbb{R}}h_F(x))\leq L'\epsilon,
		\end{align*}
		since $h_F$ is bounded.
		
		Using the arguments we can also show that $g$ satisfies the extended uniform variation condition.
		For arbitrary $2\leq k \leq m$ and $i_1<i_2<\ldots<i_m$
		\begin{align*}
		&\mathbb{E}\left(\sup\limits_{\lvert x_1-{Y}_{i_1}\rvert\leq \epsilon }\left\vert1_{[h({Y}_{i_1},X_{i_2},\ldots,X_{i_k},Y_{i_{k+1}},\ldots,{Y}_{i_m})\leq t]}-1_{[h({x}_1,X_{i_2},\ldots,X_{i_k},Y_{i_{k+1}},\ldots,{Y}_{i_m})\leq t]}\right\vert\right)\\
		&\leq \sup\limits_{t \in \mathbb{R} }\left\vert\int_{t-L\epsilon}^{t+L\epsilon}h_{F;X_{i_1},\ldots,X_{i_k}}(x)dx\right\vert\leq L\epsilon.
		\end{align*}

		Applying Lemma \ref{remterm} we get for $2\leq k \leq n$
		
		\begin{align*}
		\sup\limits_{t\in\mathbb{R}}\left\vert\sqrt{n}\frac{\binom{m}{k}}{\binom{n}{k}}\sum_{1\leq i_1,\ldots,i_k\leq n}g_k(X_{i_1},\ldots,X_{i_k},t)\right\vert\leq \sqrt{n} n^{-k} o_p(n^{k-\frac{1}{2}-\frac{\delta-2}{8\delta}})=o_p(n^{-\frac{\delta-2}{8\delta}}).
		\end{align*}

		\vspace*{0.1cm}
		With Slutsky's Theorem the proof is then completed.
	\end{proof}
	
	\vspace{0.3cm}
	
	We will now prove the convergence of the remaining term of the Bahadur representation (which was shown by \cite{Gho1971} under independence), where we need the following lemma.
	
	\begin{lemm}\label{varabsreg}
		$\newline$
		Let $(X_n)_{n\in \mathbb{N}}$ be NED with approximation constants $(a_l)_{l\in \mathbb{N}}$ on an absolutely regular process $(Z_n)_{n\in\mathbb{Z}}$ with mixing coefficients $(\beta(l))_{l\in\mathbb{N}}$. Moreover, let be $\sum_{l=1}^n a_l <\infty$ and $\sum_{l=1}^n \beta(l) <\infty$. If $X_1$ is bounded and $\mathbb{E}X_i=0$, then for a constant $C$
		\begin{align*}
		\mathbb{E}\left(\sum_{i=1}^n X_i\right)^2\leq C \cdot n.
		\end{align*}
	\end{lemm}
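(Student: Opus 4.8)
The plan is to expand the square, reduce to a single covariance sum by stationarity, and then control each covariance by combining the near epoch dependence approximation with the covariance inequality for the underlying absolutely regular process. First I would write, using stationarity and $\mathbb{E}X_i=0$ so that $\mathbb{E}(X_iX_j)$ depends only on $j-i$,
\begin{align*}
\mathbb{E}\left(\sum_{i=1}^n X_i\right)^2=\sum_{i=1}^n\mathbb{E}(X_i^2)+2\sum_{1\le i<j\le n}\mathbb{E}(X_iX_j)=n\,\mathbb{E}(X_1^2)+2\sum_{k=1}^{n-1}(n-k)\,\mathbb{E}(X_1X_{1+k}).
\end{align*}
Since $X_1$ is bounded, $\mathbb{E}(X_1^2)<\infty$, and because $|n-k|\le n$ it suffices to show $\sum_{k=1}^{\infty}|\mathbb{E}(X_1X_{1+k})|<\infty$; the whole expression is then at most $n\big(\mathbb{E}(X_1^2)+2\sum_{k\ge 1}|\mathbb{E}(X_1X_{1+k})|\big)=Cn$.

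The core step is bounding $|\mathbb{E}(X_1X_{1+k})|$ for fixed $k$. Let $M$ be an almost sure bound for $|X_1|$ and set $l=\lfloor k/3\rfloor$. Put $\hat X_1=\mathbb{E}(X_1\mid\mathcal G_{-l}^l)$ and $\hat X_{1+k}=\mathbb{E}(X_{1+k}\mid\mathcal G_{1+k-l}^{1+k+l})$; both are bounded by $M$ since conditional expectation is an $L^\infty$-contraction. I would then decompose
\begin{align*}
\mathbb{E}(X_1X_{1+k})=\cov(\hat X_1,\hat X_{1+k})+\cov(X_1-\hat X_1,X_{1+k})+\cov(\hat X_1,X_{1+k}-\hat X_{1+k}).
\end{align*}
For the two approximation terms the NED property gives $\mathbb{E}|X_1-\hat X_1|\le a_l$ and, by stationarity, $\mathbb{E}|X_{1+k}-\hat X_{1+k}|\le a_l$, so bounding the respective second factor by $M$ shows each covariance is at most $2Ma_l$. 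For the leading term, $\hat X_1$ is $\mathcal G_{-l}^l$-measurable and $\hat X_{1+k}$ is $\mathcal G_{1+k-l}^{1+k+l}$-measurable, and the gap between these $\sigma$-fields of $(Z_n)$ is $1+k-2l\ge k/3\ge\lfloor k/3\rfloor$; since $\beta$ is non-increasing, the covariance inequality for absolutely regular sequences (via $\alpha(l)\le\tfrac12\beta(l)$) yields $|\cov(\hat X_1,\hat X_{1+k})|\le CM^2\beta(\lfloor k/3\rfloor)$. Combining the three estimates gives $|\mathbb{E}(X_1X_{1+k})|\le C\big(\beta(\lfloor k/3\rfloor)+a_{\lfloor k/3\rfloor}\big)$ with $C$ uniform in $k$.

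It remains to sum over $k$. Grouping indices with the same value of $\lfloor k/3\rfloor$ costs only a factor $3$, so
\begin{align*}
\sum_{k=1}^{\infty}|\mathbb{E}(X_1X_{1+k})|\le C\sum_{k=1}^{\infty}\big(\beta(\lfloor k/3\rfloor)+a_{\lfloor k/3\rfloor}\big)\le 3C\sum_{l=0}^{\infty}\big(\beta(l)+a_l\big)<\infty
\end{align*}
by the hypotheses $\sum\beta(l)<\infty$ and $\sum a_l<\infty$, which establishes $\mathbb{E}(\sum_{i=1}^n X_i)^2\le Cn$. The step I expect to be most delicate is the covariance inequality for $\cov(\hat X_1,\hat X_{1+k})$: one must check carefully that $\hat X_1$ and $\hat X_{1+k}$ are measurable with respect to the correct finite windows of $(Z_n)$, that the separation between these windows is indeed of order $k/3$, and hence that absolute regularity (rather than merely strong mixing) delivers the bound $\beta(\lfloor k/3\rfloor)$ with a constant independent of $k$.
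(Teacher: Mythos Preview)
Your proof is correct and follows the same approach as the paper: expand the square by stationarity, bound each $|\mathbb{E}(X_1X_{1+k})|$ by a constant times $\beta(\lfloor k/3\rfloor)+a_{\lfloor k/3\rfloor}$, and sum. The only difference is cosmetic: the paper invokes Lemma~2.18 of Borovkova et al.\ (2001) as a black box for that covariance bound, whereas you reprove it inline via the decomposition $X_1X_{1+k}=\hat X_1\hat X_{1+k}+(X_1-\hat X_1)X_{1+k}+\hat X_1(X_{1+k}-\hat X_{1+k})$.
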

	
	\begin{proof}
		$\newline$
		Because of the stationarity of the $X_i$ we can write
		\begin{align}
		\mathbb{E}\left(\sum_{i=1}^n X_i\right)^2&\leq \sum_{\stackrel{1\leq i,k \leq n}{i+k\leq n}}\mathbb{E} \left\vert\left(X_iX_{i+k}\right)\right\vert \nonumber\\
		&\leq \sum_{i=1}^{n}\sum_{k=i+1}^{n-i} \left(4\left\Vert X_1 \right\Vert_\infty a_{\left[\frac{k}{3}\right]}+2\left\Vert X_1 \right\Vert^2_\infty \beta\left(\left[\frac{k}{3}\right]\right)\right) \label{vari}\\
		&n \sum_{k=1}^n \left(4\left\Vert X_1 \right\Vert_\infty a_{\left[\frac{k}{3}\right]}+2\left\Vert X_1 \right\Vert^2_\infty \beta\left(\left[\frac{k}{3}\right]\right)\right)\nonumber
		&\leq C\cdot n \nonumber,
		\end{align}
		where we used Lemma 2.18 of Borovkova et al. (2001) in line \ref{vari}.
	\end{proof}
	
	\begin{lemm}\label{bahad}
		$\newline$
		Let $(X_n)_{n\in\mathbb{N}}$ be NED with approximation constants $(a_l)_{l\in\mathbb{N}}$ on an absolutely regular process $(Z_n)_{n\in\mathbb{Z}}$ with mixing coefficients $(\beta(l))_{l\in\mathbb{N}}$ for which holds $\beta(l)=O\left(l^{-\delta}\right)$ and $a_l=O\left(l^{-\delta-2}\right)$ for a $\delta>1$.
		Moreover let $h(x_1,\ldots,x_m)$ be a Lipschitz-continuous kernel with distribution function $H_F$ and related density $0<h_F<\infty$ and for all $2\leq k \leq m$ let $h_{F;X_2,\ldots,X_k}$ be bounded. Then we have for the Bahadur representation with $\hat{\xi}_p=H_n^{-1}(p)$
		\begin{align*}
		\hat{\xi}_p=\xi_p+\frac{H_F(\xi_p)-H_n(\xi_p)}{h_F(\xi_p)}+o_p(\frac{1}{\sqrt{n}}).
		\end{align*}
	\end{lemm}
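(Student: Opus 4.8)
The plan is to follow Ghosh's approach (\cite{Gho1971}), which reduces the Bahadur representation to a $\sqrt{n}$-consistency statement for $\hat{\xi}_p$ together with a \emph{pointwise} oscillation bound on the centred empirical $U$-distribution function; the monotonicity of $H_n$ and $H_F$ then upgrades the pointwise control to the uniform control on the relevant local scale. Throughout I write $\hat{\xi}_p=H_n^{-1}(p)$ and $\xi_p=H_F^{-1}(p)$.

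First I would fix the rate of the quantile. From Corollary \ref{glican} we have $\sup_{t\in\mathbb{R}}|\sqrt{n}(H_n(t)-H_F(t))|=O_p(1)$. Since $H_n(\hat{\xi}_p)=p+O\!\left(\binom{n}{m}^{-1}\right)=H_F(\xi_p)+o_p(n^{-1/2})$, the defining relation yields $H_F(\hat{\xi}_p)-H_F(\xi_p)=O_p(n^{-1/2})$, and because $h_F$ is continuous and strictly positive at $\xi_p$, $H_F$ is locally bi-Lipschitz there, whence $\hat{\xi}_p-\xi_p=O_p(n^{-1/2})$.

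The core step is the pointwise oscillation bound. Fix $c\in\mathbb{R}$, set $t_n=\xi_p+cn^{-1/2}$, and show
\[
\sqrt{n}\big[(H_n-H_F)(t_n)-(H_n-H_F)(\xi_p)\big]\stackrel{\mathbb{P}}{\longrightarrow}0.
\]
Applying the Hoeffding decomposition (Definition \ref{hoeff}) to the kernel $1_{[h\leq t]}$ at $t_n$ and at $\xi_p$, the degenerate terms are already $o_p(1)$ \emph{uniformly} in $t$ by the estimates in the proof of Corollary \ref{glican} (via Lemma \ref{remterm} and Lemma \ref{finalcov}), so their increments are $o_p(1)$ as well. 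For the linear part I would analyse $\tfrac{m}{\sqrt{n}}\sum_{i=1}^n\Delta_i$ with $\Delta_i=g_1(X_i,t_n)-g_1(X_i,\xi_p)$. Using boundedness of the conditional densities $h_{F;X_{i_2},\ldots,X_{i_k}}$ one gets $\|\Delta_i\|_\infty=O(cn^{-1/2})$ and $\mathbb{E}\Delta_i=0$, while $\Delta_i$ remains $L_1$-NED on $(Z_n)_{n\in\mathbb{Z}}$. The covariance estimate underlying Lemma \ref{covineq2} and Lemma \ref{varabsreg} then gives
\[
\mathbb{E}\Big(\sum_{i=1}^n\Delta_i\Big)^2\leq Cn\Big(\|\Delta_i\|_\infty\sum_{l}a_l+\|\Delta_i\|_\infty^2\sum_{l}\beta(l)\Big)=O(n^{1/2}),
\]
where the two series converge under $\beta(l)=O(l^{-\delta})$, $a_l=O(l^{-\delta-2})$, $\delta>1$. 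Hence $\var\!\left(\tfrac{m}{\sqrt{n}}\sum_{i=1}^n\Delta_i\right)=O(n^{-1/2})\to0$, and Chebyshev's inequality delivers the claimed pointwise convergence.

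Finally I would invoke Ghosh's monotonicity lemma to assemble the pieces. Writing $p=H_n(\hat{\xi}_p)+o_p(n^{-1/2})=H_F(\hat{\xi}_p)+[H_n(\hat{\xi}_p)-H_F(\hat{\xi}_p)]$, the oscillation bound together with the localisation $\hat{\xi}_p-\xi_p=O_p(n^{-1/2})$ lets me replace $H_n(\hat{\xi}_p)-H_F(\hat{\xi}_p)$ by $H_n(\xi_p)-H_F(\xi_p)$ up to $o_p(n^{-1/2})$; a first-order Taylor expansion of $H_F$ at $\xi_p$, whose remainder is $o_p(n^{-1/2})$ since $\hat{\xi}_p-\xi_p=O_p(n^{-1/2})$ and $h_F$ is continuous, then yields $h_F(\xi_p)(\hat{\xi}_p-\xi_p)=H_F(\xi_p)-H_n(\xi_p)+o_p(n^{-1/2})$, which is exactly the assertion. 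I expect the main obstacle to be the linear-part variance estimate in the third step: one must exploit \emph{both} the $O(n^{-1/2})$ size of the increments $\Delta_i$ and the summability of the NED and mixing coefficients, rather than the crude $O(n)$ bound of Lemma \ref{varabsreg}, to force the variance genuinely to vanish.
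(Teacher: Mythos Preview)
Your proposal follows essentially the same route as the paper: both rely on Ghosh's argument, decompose the increment $H_n(t_n)-H_n(\xi_p)$ via the Hoeffding decomposition, dispose of the degenerate parts using the estimates behind Lemma \ref{finalcov}/Lemma \ref{remterm}, and show that the linear part of the oscillation has vanishing variance by exploiting that $\|\Delta_i\|_\infty\to 0$ in the covariance bound underlying Lemma \ref{varabsreg} (Borovkova et al., Lemma 2.18). Your variance computation is in fact more explicit than the paper's, which simply argues that the constant $C$ in $\mathbb{E}\big(\sum_i\Delta_i\big)^2\leq Cn$ tends to zero because it depends on $\|g_1-g_1'\|_\infty$.

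There is, however, one point where your argument overshoots the available hypotheses. Your first step invokes Corollary \ref{glican} to obtain $\hat{\xi}_p-\xi_p=O_p(n^{-1/2})$, but that corollary requires the much stronger summability conditions $\sum_l l^2\beta^{\gamma/(2+\gamma)}(l)<\infty$ and $\sum_l l^2 a_l^{\gamma/(2+2\gamma)}<\infty$ (effectively $\delta>8$), whereas Lemma \ref{bahad} only assumes $\beta(l)=O(l^{-\delta})$, $a_l=O(l^{-\delta-2})$ with $\delta>1$. Under the lemma's hypotheses alone, Corollary \ref{glican} is not available, and your final paragraph uses this rate both for the Taylor remainder and for the localisation step. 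The paper avoids this entirely: it does \emph{not} establish the quantile rate in advance. Instead it sets $\xi_{nt}=\xi_p+tn^{-1/2}$, $Z_n(t)=\sqrt{n}\,(H_F(\xi_{nt})-H_n(\xi_{nt}))/h_F(\xi_p)$ and $V_n(t)=\sqrt{n}\,(H_F(\xi_{nt})-H_n(\hat{\xi}_p))/h_F(\xi_p)$, verifies $V_n(t)\to t$ (from $|p-H_n(\hat{\xi}_p)|\leq 1/n$ and differentiability of $H_F$) and $Z_n(t)-Z_n(0)\stackrel{\mathbb{P}}{\to}0$ for each fixed $t$, and then applies Ghosh's Lemma 1 directly. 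That lemma exploits the monotonicity to conclude $\sqrt{n}(\hat{\xi}_p-\xi_p)-Z_n(0)\stackrel{\mathbb{P}}{\to}0$ without any prior localisation of $\hat{\xi}_p$. If you drop your first paragraph and route the final assembly through Ghosh's Lemma 1 in this form, your argument goes through under the stated assumptions.
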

	
	\begin{proof}
		$\newline$
		Let us first define $t\in \mathbb{R}$, $\xi_{nt}=\xi_p+tn^{-\frac{1}{2}}, Z_n(t)=\sqrt{n}\frac{H_F(\xi_{nt})-H_n(\xi_{nt})}{h_F(\xi_p)}$ and $V_n(t)=\sqrt{n}\frac{H_F(\xi_{nt})-H_n(\hat{\xi}_{p})}{h_F(\xi_p)}$.

		We want to use that $\lvert p-H_n(\hat{\xi}_p)\rvert \leq \tfrac{1}{n}$ to obtain
		
		\begin{align*}
		V_n(t)&=\sqrt{n}\frac{H_F(\xi_{nt})-p+p-H_n(\hat{\xi}_{nt})}{h_F(\xi_p)}\\
		&=\underbrace{\sqrt{n}\frac{H_F(\xi_p+tn^{-\frac{1}{2}})-p}{h_F(\xi_p)}}_{=:V'_n(t)}+\underbrace{\sqrt{n}\frac{\overbrace{p-H_n(\hat{\xi}_p)}^{=O(n^{-1})}}{h_F(\xi_p)}}_{=O(n^{-\frac{1}{2}})}\longrightarrow t.
		\end{align*}
		
		\vspace*{0.3cm}
		
		The next step is to show that $Z_n(t)-Z_n(0)\stackrel{P}{\longrightarrow}0.$
		
		\vspace*{0.15cm}
		It is
		
		\begin{align*}
		&\var(Z_n(t)-Z_n(0))\\
		&= \frac{n}{h_F^2(\xi_p)}\var\left(\frac{1}{\binom{n}{m}}\sum_{1\leq i_1<\ldots<i_m\leq n}1_{\left[h(X_{i_1},\ldots,X_{i_m})\leq \xi_p+tn^{-\frac{1}{2}}\right]}-1_{\left[h(X_{i_1},\ldots,X_{i_m})\leq \xi_p\right]}\right).
		\end{align*}
		
		To find bounds for the right hand side, we define $U_n$ and $U_n'$ by 
		\begin{align*}
		U_n&=\frac{1}{\binom{n}{m}}\sum_{1\leq i_1<\ldots<i_m\leq n}1_{\left[h(X_{i_1},\ldots,X_{i_m})\leq \xi_p+tn^{-\frac{1}{2}}\right]}\\
		&=\theta+\sum_{j=1}^{m}\binom{m}{j}\frac{1}{\binom{n}{j}}\sum_{1\leq i_1<\ldots<i_j\leq n}g_k(X_{i_1},\ldots,X_{i_k})
		\end{align*}
		
		\begin{align*}
		U'_n&=\frac{1}{\binom{n}{m}}\sum_{1\leq i_1<\ldots<i_m\leq n}1_{\left[h(X_{i_1},\ldots,X_{i_m})\leq \xi_p\right]}\\
		&=\theta'+\sum_{j=1}^{m}\binom{m}{j}\frac{1}{\binom{n}{j}}\sum_{1\leq i_1<\ldots<i_j\leq n}g'_k(X_{i_1},\ldots,X_{i_k}).
		\end{align*}
		Therefore,
		\begin{align*}
		&\sqrt{\var\left(\frac{1}{\binom{n}{m}}\sum_{1\leq i_1<\ldots<i_m\leq n}1_{\left[\xi_p<h(X_{i_1},\ldots,X_{i_m})\leq \xi_p+tn^{-\frac{1}{2}}\right]}\right)}\\
		\leq& \sqrt{\underbrace{\var(\theta)}_{=0}}+\sqrt{\underbrace{\var(\theta')}_{=0}}+\sqrt{\var\left(\frac{m}{n}\sum_{i=1}^n(g_1(X_i)-g'_1(X_i))\right)}\\
		+&\sqrt{\var\left(\frac{\binom{m}{2}}{\binom{n}{2}}\sum_{1\leq i <j \leq n}g_2(X_i,X_j)\right)}+\sqrt{\var\left(\frac{\binom{m}{2}}{\binom{n}{2}}\sum_{1\leq i <j \leq n}g'_2(X_i,X_j)\right)}\\
		+&\ldots+\sqrt{\var\left(\frac{1}{\binom{n}{m}}\sum_{1\leq i_1<\ldots<i_m\leq n}g_m(X_{i_1},\ldots,X_{i_m})\right)}\\
		+&\sqrt{\var\left(\frac{1}{\binom{n}{m}}\sum_{1\leq i_1<\ldots<i_m\leq n}g'_m(X_{i_1},\ldots,X_{i_m})\right)}.\\
		\end{align*}
		We have already shown in Theorem \ref{asynom}, for all $2\leq k \leq m$,  that  $$\var\left(\frac{\binom{m}{k}}{\binom{n}{k}}\sum_{1\leq i_1<\ldots<i_k\leq n}g_k(X_{i_1},\ldots,X_{i_k})\right)=O(n^{-2+\gamma})$$ holds 
		for a $\gamma<1$, if the kernel is bounded and satisfies the extended variation condition. Analogous to the proof of Corollary \ref{glican} we can use that $g(x_1,\ldots,x_m)=1_{\left[h(X_{i_1},\ldots,X_{i_m})\leq \xi_p+tn^{-\frac{1}{2}}\right]}$ and $g'(x_1,\ldots,x_m)=1_{\left[h(X_{i_1},\ldots,X_{i_m})\leq \xi_p\right]}$  satisfy the extended variation condition. 
		
		Applying Lemma \ref{varabsreg} on $g_1(X_i)-g'_1(X_i)$ we have
		\begin{align*}
		\mathbb{E}\lvert\sum_{i=1}^n(g_1(X_i)-g'_1(X_i))\rvert^2\leq Cn.
		\end{align*}
		This is possible since $g_1, g'_1$ and $f(x,y)=x-y$ fulfil the variation condition and are bounded ($f$ when using bounded arguments). Also $\sum_{l=1}^\infty\beta(l)<\infty$ and $\sum_{l=1}^\infty a'_l<\infty$ because of the assumptions of Lemma \ref{bahad}.
		
		This helps us to gain
		\begin{align*}
		&\sqrt{\var\left(\frac{1}{\binom{n}{m}}\sum_{1\leq i_1<\ldots<i_m\leq n}1_{\left[\xi_p<h(X_{i_1},\ldots,X_{i_m})\leq \xi_p+tn^{-\frac{1}{2}}\right]}\right)}\\
		&\leq \sqrt{\frac{m^2}{n^2}Cn}+2(m-1)\sqrt{O(n^{-2+\gamma})}\leq \frac{Cm^2}{\sqrt{n}}+2(m-1)O(n^{-1+\gamma/2}),
		\end{align*}
		where the constant $C$ only depends on $\left\Vert g_1(X_i)- g'_1(X_i) \right\Vert_\infty$.
		
		Then
		
		\begin{align*}
		\var(Z_n(t)-Z_n(0))&\leq \frac{n}{h^2_F(\xi_p)}\left(\frac{Cm^2}{\sqrt{n}}+2(m-1)O(n^{-1+\gamma/2})\right)^2\\
		&\leq \frac{m^2}{h^2_F(\xi_p)} C^2 + \frac{4m^2(m-1)}{h^2_F(\xi_p)}C \sqrt{n}O(n^{-1+\gamma/2})+4(m-1)^2 O(n^{-2+\gamma})\\
		&\leq \frac{m^2}{h^2_F(\xi_p)} C^2+ \frac{4m^2(m-1)}{h^2_F(\xi_p)}C O(n^{-\frac{1}{2}+\gamma/2})+4(m-1)^2 O(n^{-2+\gamma}).
		\end{align*}
		
		Since $\lvert g_1(X_i)-g'_1(X_i)\rvert\leq 1$ for all $X_i$ and
		\begin{align*}
		&\lvert g_1(X_i)-g'_1(X_i)\rvert\stackrel{P}{\longrightarrow} 0
		\end{align*}
		the constant $C$ converges to zero in probability and therefore
		
		$$\var(Z_n(t)-Z_n(0))\stackrel{P}{\longrightarrow}0.$$
		
		\vspace*{0.1cm}
		
		Applying the Chebychev inequality we then have $Z_n(t)-Z_n(0)\stackrel{P}{\longrightarrow}0$ .
		
		Altogether we have for $t\in\mathbb{R}$ and every $\epsilon>0$
		\begin{align}
		\mathbb{P}(\sqrt{n}(\hat{\xi}_p-\xi_p)&\leq t, Z_n(0)\geq t+\epsilon)=\mathbb{P}(Z_n(t)\leq V_n(t),Z_n(0)\geq t+ \epsilon)\nonumber\\
		&\leq\mathbb{P}(\lvert Z_n(t)-Z_n(0)\rvert\geq\frac{\epsilon}{2})+\mathbb{P}(\lvert V_n(t)-t\rvert\geq \frac{\epsilon}{2})\longrightarrow 0 \nonumber
		\end{align}
		
		and analogously
		\begin{align*}
		\mathbb{P}(\sqrt{n}(\hat{\xi}_p-\xi_p)\geq t, Z_n(0)\leq t)\longrightarrow 0.
		\end{align*}
		
		Using Lemma 1 of \cite{Gho1971} the proof is completed. 
	\end{proof}

\end{document}